\newtheorem{theorem}{Theorem}[section]
\newtheorem{conjecture}[theorem]{Conjecture}
\newtheorem{corollary}[theorem]{Corollary}
\newtheorem{definition}[theorem]{Definition}
\newtheorem{lemma}[theorem]{Lemma}
\newtheorem{proposition}[theorem]{Proposition}
\newenvironment{proof}[1][Proof]{\noindent\textbf{#1.} }{\ \rule{0.5em}{0.5em}}
\begin{document}
\date{}

\title{$(\alpha_1,\alpha_2)$-Spaces and Clifford-Wolf Homogeneity\thanks{Supported by NSFC (no. 11271198, 11221091, 11271216) and SRFDP of China}}
\author{Ming Xu$^1$ and Shaoqiang Deng$^2$ \thanks{S. Deng is the corresponding author. E-mail: dengsq@nankai.edu.cn}\\
\\
$^1$College of Mathematics\\
Tianjin Normal University\\
 Tianjin 300387, P.R. China\\
 \\
$^2$School of Mathematical Sciences and LPMC\\
Nankai University\\
Tianjin 300071, P.R. China}
\maketitle

\begin{abstract}
In this paper, we introduce a new type of Finsler metrics, called $(\alpha_1,\alpha_2)$-metrics. We define the notion of  the good datum
of a homogeneous $(\alpha_1,\alpha_2)$-metric and use that to study the geometric properties.
In particular,  we give a formula of the S-curvature and deduce a condition for the S-curvature to be
vanishing identically. Moreover, we  consider the restrictive Clifford-Wolf homogeneity of
 left invariant  $(\alpha_1,\alpha_2)$-metrics on compact connected  simple Lie groups. We prove that, in some special cases, a restrictively Clifford-Wolf homogeneous $(\alpha_1,\alpha_2)$-metric must be Riemannian. An unexpected
interesting observation contained in the proof reveals the fact that the S-curvature may play an
important role in the study of Clifford-Wolf homogeneity in Finsler geometry.
%We study some geometric properties of $(\alpha_1,\alpha_2)$-metrics and consider the CW-homogeneity of such spaces. In particular, we prove that a CW-homogeneous left invariant $(\alpha_1,\alpha_2)$-metric on
%a compact connected simple Lie group $G$,  with a dimension decomposition satisfying $n_1>n_2=2$ and a commutative $\mathcal{V}_2$,  must be Riemannian.

\textbf{Mathematics Subject Classification (2010)}: 22E46, 53C30.

\textbf{Key words}:  Finsler spaces,  $(\alpha_1,\alpha_2)$-metrics, CW-homogeneity.

\end{abstract}

\section{Introduction}

Finsler geometry has been proven to be very useful in many fields, including general relativity, medical imaging, psychology and biology. However, due to the complexity of the computation involved in the related problems, it is very hard to get deep results with the full generality. Meanwhile, the study of the special case, Riemannian geometry, is extremely fruitful. While studying non-Riemannian metrics, a large number of interesting results only deal with Randers metrics, a special class which is very close to Riemannian ones. See for example \cite{BCS00,CS04,DH13}, etc.

In view of the above facts, it would be much more promising that we first consider some special types of Finsler metrics. In this direction, at least two research fields in Finsler geometry are worthwhile to be mentioned, namely, the theory of $(\alpha,\beta)$-metrics and that of  homogeneous Finsler spaces.
An $(\alpha,\beta)$-metric is a Finsler metric of the form $F=\alpha\phi(\frac{\beta}{\alpha})$, where $\alpha$ is a Riemannian metric, $\beta$ is a $1$-form and $\phi$ is a real smooth function. This kind of metrics were first considered by Matsumoto. See \cite{MA92} for a survey of the development of the theory before 1992. The condition for such a metric to be positive definite can be found in \cite{CS04}. Recently, there is much progress in the study of $(\alpha,\beta)$-metrics; see for example \cite{SH09}.
The theory of homogeneous Finsler spaces has been developing very rapidly in the last decade. In \cite{DE12}, the second author gives a detailed survey on the topic and presents many open problems concerning Lie groups and Finsler geometry.

One main purpose of this paper is to initiate the study of a special class of Finsler metrics, called $(\alpha_1,\alpha_2)$-metrics. Roughly speaking,  an $(\alpha_1,\alpha_2)$-metric can be defined as follows. Let $\alpha$ be a Riemannian metric on a manifold $M$. Suppose  we have an $\alpha$-orthogonal decomposition
$TM=\mathcal{V}_1\oplus\mathcal{V}_2$, where $\mathcal{V}_1$ and $\mathcal{V}_2$ are
$n_1$- and $n_2$-dimensional linear subbundles of $TM$, respectively. Denote the restrictions of $\alpha$ to
$\mathcal{V}_1$ and $\mathcal{V}_2$ as $\alpha_1$ and $\alpha_2$, respectively. Then $\alpha_1$ and $\alpha_2$  can be naturally extended to $TM$ such that $\alpha^2=\alpha_1^2+\alpha_2^2$. A Finsler
metric $F=f(\alpha_1,\alpha_2)$, where $f$ is a positive smooth function on $\mathbb{R}^2$  which is positively homogenous of degree 1, is called an $(\alpha_1,\alpha_2)$-metric with dimension decomposition $(n_1,n_2)$. Note that we can always require that $n_1\geq n_2>1$, since otherwise
we can exchange the two subbundles if $n_1<n_2$, and it is an $(\alpha,\beta)$-metric when
$n_2=1$.

The notion of $(\alpha_1,\alpha_2)$-metrics can be viewed as a generalization of
$(\alpha,\beta)$-metrics.
Although we expect that $(\alpha_1,\alpha_2)$-metrics will be as computable as $(\alpha,\beta)$-metrics, it will be more convenient that
we start with  homogeneous $(\alpha_1,\alpha_2)$-spaces. We will define the good normalized datum of an $(\alpha_1,\alpha_2)$-metric and show how to find good normalized datum for the metric
which is compatible with its homogeneity. As an application, we obtain a formula of the S-curvature of a homogeneous $(\alpha_1,\alpha_2)$-space and deduce a condition for the S-curvature to be vanishing identically.

Another main purpose of this paper is to discuss the restrictive Clifford-Wolf homogeneity (restrictive CW-homogeneity) of left invariant $(\alpha_1,\alpha_2)$-metrics $F$ on a compact connected simple Lie group $G$. The motivation to study the CW-homogeneity
of left invariant Finsler metrics is as the following. Recall that  connected CW-homogeneous Riemannian manifolds have been classified by Berestovskii and Nikonorov in \cite{BN09}. The list consists of the Euclidean spaces, the odd-dimensional spheres of constant curvature, compact Lie groups with bi-invariant Riemannian metrics, and the Riemannian product of the above three types of manifolds. Therefore, to classify CW-homogeneous Finsler spaces, it is natural to consider CW-homogeneous Finsler metrics on the above three types of manifolds.
The analogue of Euclidean spaces in Finsler geometry is easy, and CW-homogeneous Finsler metrics on spheres have been classified recently
by the authors in \cite{XD8}. Therefore our next step is to classify left invariant CW-homogeneous Finsler metrics on compact Lie groups. In our previous works, we have  performed this program for Randers metrics and $(\alpha,\beta)$-metrics (see \cite{DM3, DM5}). Therefore we study CW-homogeneity of left invariant $(\alpha_1,\alpha_2)$-metrics on compact Lie groups in this paper. However, the general case seems to be very involved, so
we will confine ourselves to
the case that in the decomposition
$\mathfrak{g}=\mathbf{V}_1+\mathbf{V}_2$ of $TG_e=\mathfrak{g}$, the subspace $\mathbf{V}_2$ is a commutative subalgebra
of $\mathfrak{g}$. In particular,  we will discuss the following two cases:
\begin{description}
\item{{\bf Case 1}.}\quad $G$ is a compact connected simple Lie group, and $\mathbf{V}_2$ is a Cartan subalgebra.
\item{{\bf Case 2}.}\quad  $G$ is a compact connected simple Lie group, and $\mathbf{V}_2$ is 2-dimensional commutative subalgebra.
\end{description}

In the study of the restrictive CW-homogeneity of  left invariant non-Riemannian
$(\alpha_1,\alpha_2)$-metrics in Case 1, the $S$-curvature plays an important role.
The main results  are the following two theorems.

\begin{theorem}\label{main-thm-0}
Let $F$ be a left invariant  $(\alpha_1,\alpha_2)$-metric on a compact connected simple Lie group $G$ with a decomposition $\mathfrak{g}=\mathbf{V}_1+\mathbf{V}_2$ such that $\mathbf{V}_2$ is a Cartan subalgebra,  and $\dim \mathbf{V}_2>1$. If $F$ is  restrictively CW-homogeneous, then it must be Riemannian.
\end{theorem}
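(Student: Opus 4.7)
The plan is to deduce the theorem from the explicit S-curvature formula for homogeneous $(\alpha_1,\alpha_2)$-metrics established earlier in the paper, via two inputs: first, restrictive CW-homogeneity forces the S-curvature to vanish identically; second, a Cartan subalgebra of dimension at least two provides enough independent directions in $\mathbf{V}_2$ to make the $S\equiv 0$ identity very restrictive on the defining function $f$.

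I would first establish $S\equiv 0$ by essentially the same mechanism used in \cite{DM3,DM5}: each Clifford-Wolf translation preserves $F$ and moves every point by the same distance, so its infinitesimal generator is a Killing vector field of constant $F$-length, and the existence of enough such fields (guaranteed by the restrictive CW-homogeneity assumption on $G$) forces the mean-tangent/$S$-curvature to vanish at the identity, hence everywhere by left invariance.

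Next, I would substitute $S\equiv 0$ into the homogeneous S-curvature formula. Fix a bi-invariant inner product on $\mathfrak{g}$, and let $R$ denote the root system of $\mathfrak{g}_{\mathbb{C}}$ with respect to the Cartan subalgebra $\mathfrak{t}=\mathbf{V}_2$, so that $\mathbf{V}_1=\bigoplus_{\beta\in R^{+}}\mathfrak{g}_\beta$ decomposes into real two-planes and $\mathrm{ad}(y_2)$ acts on $\mathfrak{g}_\beta$ with eigenvalue $i\beta(y_2)$. Evaluate the resulting identity at $y=y_1+y_2$ with $y_2\in\mathfrak{t}$, and vary $y_2$ inside $\mathfrak{t}$ while fixing $|y_2|=\alpha_2$. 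Because $\dim\mathfrak{t}\geq 2$, one can change the numerical values $\beta(y_2)$ almost independently for different roots $\beta$, producing a large family of relations on the partial derivatives of $f(\alpha_1,\alpha_2)$.

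The main obstacle is the algebraic extraction: showing that these relations together force the single identity $\alpha_2\,\partial_1 f = \alpha_1\,\partial_2 f$, which, combined with the degree-one positive homogeneity of $f$, reduces $f$ to a function of $\alpha=\sqrt{\alpha_1^2+\alpha_2^2}$ alone, hence $F=c\alpha$ is Riemannian. This is where the assumption $\dim\mathbf{V}_2>1$ is indispensable: when $\mathbf{V}_2$ is one-dimensional, the direction of $y_2$ is essentially fixed (up to sign) and the S-curvature identity admits genuine non-Riemannian solutions, matching the existence of non-Riemannian CW-homogeneous $(\alpha,\beta)$-metrics found in \cite{DM3,DM5}.
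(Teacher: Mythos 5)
Your first step (restrictive CW-homogeneity forces $S\equiv 0$) is exactly the paper's Theorem \ref{theorem-4} and is fine. The fatal gap is in steps two and three: for a homogeneous $(\alpha_1,\alpha_2)$-metric the identity $S\equiv 0$ imposes \emph{no condition at all} on the defining function $f$. By Theorem \ref{main-theorem-2} the S-curvature has the form $S(x,y)=\Phi(y)\bigl(L_1\langle[y'',y']_{\mathfrak m},y'\rangle+L_2\langle[y'',y']_{\mathfrak m},y''\rangle\bigr)$, and by Theorem \ref{vanishing-S-curvature-thm} and Corollary \ref{corollary-1}, $S\equiv 0$ is equivalent to the purely Lie-theoretic condition $\langle[y',y''],y'\rangle=\langle[y',y''],y''\rangle=0$ for all $y'\in\mathbf{V}_1$, $y''\in\mathbf{V}_2$: the function $f$ (equivalently $\phi$, $L$) drops out completely. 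Concretely, take $\alpha$ bi-invariant, $\mathbf{V}_2$ a Cartan subalgebra, $\mathbf{V}_1=\mathbf{V}_2^{\perp_{\mathrm{bi}}}$, and any admissible non-Riemannian $\phi$; skew-symmetry of $\mathrm{ad}$ gives both orthogonality conditions, so $S\equiv 0$ while $F$ is not Riemannian. Hence varying $y_2$ inside $\mathfrak t$ cannot produce relations on the partial derivatives of $f$ — it only varies the bracket terms, which $S\equiv 0$ simply forces to vanish (since $\Phi\not\equiv 0$ for non-Riemannian $F$) — and the target identity $\alpha_2\,\partial_1 f=\alpha_1\,\partial_2 f$ is simply false for such examples. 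Also, as a side remark, \cite{DM3,DM5} prove that the Randers and $(\alpha,\beta)$ cases are likewise Riemannian; they do not provide non-Riemannian CW-homogeneous examples for $n_2=1$.

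What your plan is missing is the second, essential half of the paper's argument, where $S\equiv 0$ is used only as a preliminary structural tool. From the vanishing condition one deduces (Lemma \ref{lemma-2}) that $\mathbf{V}_1=\mathbf{V}_2^{\perp_{\mathrm{bi}}}$, that $\alpha$ is a scalar multiple of the bi-invariant metric on each root plane $\mathfrak g_\lambda$, and that the Killing fields of $(G,F)$ are exactly $\mathfrak g\oplus\mathbf{V}_2$ (left translations plus right translations by the torus). Then restrictive CW-homogeneity is exploited a second time, at the level of Killing vector fields of constant length: every $X\in\mathbf{V}_1$ extends to a KVFCL of the form $(X+X',X')$ with $X'\in\mathbf{V}_2$, and differentiating the constant-length condition along $\mathrm{Ad}$-orbits (Lemma \ref{lemma-1}) together with a root-space computation shows $\lambda(X')=0$ for every positive root $\lambda$ and a dense set of $X$, i.e.\ $X'=0$. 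By continuity all of $\mathfrak g\oplus 0$ consists of KVFCLs, so all right translations are isometries, contradicting $\mathfrak g'=\mathbf{V}_2\subsetneq\mathfrak g$. Without an argument of this kind — one that genuinely uses the constant length of the Killing fields and not just the vanishing of $S$ — the proposed route cannot reach the conclusion.
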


The study of the restrictive CW-homogeneity of left invariant non-Riemannian $(\alpha_1,\alpha_2)$-metrics of Case 2 is a generalization of
our work on $(\alpha,\beta)$-metrics \cite{DM5}.
The systematic technique we have developed in the study of
Killing vector fields of constant length  of left invariant Randers and $(\alpha,\beta)$-metrics also works in
this case.

\begin{theorem}\label{main}
Let $F$ be a left invariant  $(\alpha_1,\alpha_2)$-metric on a compact connected simple Lie group $G$ with a decomposition $\mathfrak{g}=\mathbf{V}_1+\mathbf{V}_2$ such that $\mathbf{V}_2$ is a $2$-dimensional commutative subalgebra of dimension $\geq 2$. If  $F$  is restrictively CW-homogeneous, then it must be Riemannian.
\end{theorem}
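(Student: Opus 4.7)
The plan is to adapt the Killing vector fields of constant length technique that worked in \cite{DM5} for $(\alpha,\beta)$-metrics to the present two-dimensional abelian $\mathbf{V}_2$ setting. My starting point is the standard principle that restrictive CW-homogeneity of a left-invariant Finsler metric $F$ on $G$ is equivalent to the assertion that for each $X\in\mathfrak{g}$ the right-invariant extension $\hat X$ is a Killing vector field of constant length. Since $\hat X_g = dL_g(\mathrm{Ad}(g^{-1})X)$ and $F$ is left-invariant, this translates into the pointwise condition that $F_e$ is constant on every adjoint orbit $\mathrm{Ad}(G)\cdot X$. Differentiating this constancy along one-parameter subgroups $\exp(tY)$ gives the fundamental identity $g_X(X,[Y,X])=0$ for all $X,Y\in\mathfrak{g}$, where $g_X$ is the fundamental tensor of $F$ at $X$.

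Next I would specialize $X$ to the two-dimensional abelian subalgebra $\mathbf{V}_2$. Fix an $\alpha$-orthonormal basis $\{e_1,e_2\}$ of $\mathbf{V}_2$ and write $X=s_1e_1+s_2e_2$. Since $\mathbf{V}_2$ is abelian, it embeds into a Cartan subalgebra $\mathfrak{h}\supseteq\mathbf{V}_2$, so the root space decomposition of $\mathfrak{g}_{\mathbb{C}}$ with respect to $\mathfrak{h}$ is available. Plugging in $Y$ equal to the real-compact analogue of a root vector, and using the good normalized datum constructed in the paper to express $g_X$ in terms of $f$ and its partial derivatives $f_i=\partial f/\partial s_i$, the fundamental identity becomes a system of algebraic relations among the $f_i(s_1,s_2)$, the $\mathbf{V}_1$- and $\mathbf{V}_2$-components of $[Y,X]$, and the root values $\alpha(X)$.

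Varying $X$ over the $\mathfrak{h}$-regular elements of $\mathbf{V}_2$ (which form an open dense subset) and $Y$ over all root directions, I expect these relations to force $f(s_1,s_2)^2$ to be a nondegenerate positive-definite quadratic form in $(s_1,s_2)$. Since $\alpha_1^2+\alpha_2^2=\alpha^2$, this means $F^2=a\,\alpha_1^2+b\,\alpha_2^2$ for some positive constants $a,b$, and hence $F$ is Riemannian.

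The main obstacle will be ensuring that enough distinct root directions contribute independent constraints, given that $\mathbf{V}_1$ is defined only as the $\alpha$-orthogonal complement of $\mathbf{V}_2$ and need not be compatible with the root space decomposition. For each root $\alpha$ one must carefully separate the $\mathbf{V}_1$- and $\mathbf{V}_2$-components of $[Y,X]$ and handle the possibility that $\mathbf{V}_2$ contains singular elements on which some root vanishes. Here the hypothesis that $G$ is simple and $\dim\mathbf{V}_2=2$ is crucial: simplicity guarantees that the root system spans a large enough space of linear functionals on $\mathfrak{h}$, while the precise dimension two keeps the number of independent coordinates small enough that only finitely many algebraic constraints are needed to pin down $f$. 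The analogous but far more restrictive Cartan case (Theorem \ref{main-thm-0}) suggests the technical computations will be more involved here, since one has fewer regular directions to exploit and must rely more heavily on infinitesimal rather than orbit-wise arguments.
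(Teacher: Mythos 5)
There is a genuine gap, and it occurs at the very first step. Restrictive CW-homogeneity is \emph{not} equivalent to saying that for every $X\in\mathfrak{g}$ the right-invariant extension $\hat X$ is a Killing vector field of constant length (equivalently, that $F_e$ is constant on every adjoint orbit). By Proposition \ref{proposition-0} it only says that every tangent vector extends to \emph{some} KVFCL, and for a left invariant metric on $G$ the full isometry group is $L(G)R(G')$, so a Killing field is a pair $(X,X')\in\mathfrak{g}\oplus\mathfrak{g}'$ whose value at $e$ is $X-X'$; the constant-length condition reads $F(\mathrm{Ad}(g)X-\mathrm{Ad}(g')X')=\mathrm{const}$, not $F(\mathrm{Ad}(g)X)=\mathrm{const}$. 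Your premise amounts to assuming $X'=0$ from the outset, i.e.\ that $F_e$ is $\mathrm{Ad}(G)$-invariant --- but that is essentially the conclusion one is after: if $F_e$ were $\mathrm{Ad}(G)$-invariant, the normalized datum would be $\mathrm{Ad}(G)$-invariant and $\mathbf{V}_2$ would be a proper nonzero ideal of the simple algebra $\mathfrak{g}$, an immediate contradiction. The entire difficulty of the theorem, and the bulk of the paper's proof, lies in eliminating the right-translation component $X'$: one first shows $X'\in\mathfrak{c}(\mathfrak{g}')$ (Theorem \ref{theorem-6}), then controls and finally kills $X'$ through Lemma \ref{lemma-7}, Theorem \ref{theorem-7}, the real-analyticity and finiteness statements of Lemma \ref{key-lemma-2}, a Weyl-group invariance argument (Lemma \ref{aftermergelemma-008}), and the construction of a bi-invariant bilinear form forcing $\alpha_1,\alpha_2$ to be multiples of the bi-invariant metric and $\mathcal{K}_{F;1}=\mathfrak{g}\oplus 0$ (Lemma \ref{forproof}). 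All of this rests on the Key Lemma \ref{key lemma} about $\mathbf{V}\cap\mathrm{Ad}(g)\mathfrak{c}_{\mathfrak{g}}(X)$, proved by a case-by-case root-system analysis in Section 7. None of this machinery appears in your proposal, and it cannot be bypassed by the differential identity $g_X(X,[Y,X])=0$, which is simply not available without first proving $X'=0$.

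A secondary problem is the endgame you sketch. Even if one granted your premise, specializing $X$ to $\mathbf{V}_2$ and extracting constraints on $F$ restricted to $\mathbf{V}_2$ cannot force $F$ to be Riemannian: with the normalized datum, $F$ is determined by the function $\phi$ on all of $[0,1]$ (i.e.\ by its values on vectors with mixed $\mathbf{V}_1$- and $\mathbf{V}_2$-components), whereas the restriction $F|_{\mathbf{V}_2}=\alpha_2|_{\mathbf{V}_2}$ is already quadratic by normalization and carries no information about $\phi$ in the interior of $[0,1]$. The paper accordingly does not try to show $F^2=a\alpha_1^2+b\alpha_2^2$ directly; it derives a structural contradiction (that $\mathbf{V}_2$ would be an $\mathrm{Ad}(G)$-invariant ideal) from the non-Riemannian assumption together with restrictive CW-homogeneity. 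To repair your argument you would need to reinstate the unknown component $X'$ in the constant-length identity, establish the criterion $X'\in\mathfrak{c}(\mathfrak{g}')$, and then reproduce (or replace) the chain of arguments that ultimately shows $X'$ must vanish --- which is where the hypothesis $\dim\mathbf{V}_2=2$ and the Key Lemma really enter.
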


Theorem \ref{main-thm-0} and Theorem \ref{main} suggest us make the following conjecture:
\begin{conjecture}
Let $F$ be a left invariant  $(\alpha_1,\alpha_2)$-metric on a compact connected simple Lie group $G$ with a decomposition $\mathfrak{g}=\mathbf{V}_1+\mathbf{V}_2$ such that $\mathbf{V}_2$ is a commutative subalgebra. If $F$ is restrictively CW-homogeneous, then it must be Riemannian.
\end{conjecture}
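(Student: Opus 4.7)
The plan is to bridge the two extreme cases already established, namely when $\mathbf{V}_2$ is a full Cartan subalgebra (Theorem \ref{main-thm-0}) and when $\dim\mathbf{V}_2\leq 2$ (the $(\alpha,\beta)$-case \cite{DM5} together with Theorem \ref{main}), and to treat the intermediate range $3\leq\dim\mathbf{V}_2<\operatorname{rank}\mathfrak{g}$. First I would embed the commutative $\mathbf{V}_2$ in a maximal torus $\mathfrak{t}\subset\mathfrak{g}$ and decompose $\alpha$-orthogonally $\mathfrak{t}=\mathbf{V}_2\oplus\mathbf{V}_2^\perp$ with $\mathbf{V}_2^\perp\subset\mathbf{V}_1$, thus refining the original two-block decomposition to $\mathfrak{g}=(\mathbf{V}_1\cap\mathfrak{t}^\perp)\oplus\mathbf{V}_2^\perp\oplus\mathbf{V}_2$, which is adapted to the root-space decomposition of $\mathfrak{g}$ with respect to $\mathfrak{t}$. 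The bi-invariance of the background Riemannian metric together with the fact that $\mathbf{V}_2$ lies inside $\mathfrak{t}$ should let us choose a good normalized datum, in the sense introduced earlier in the paper, whose coefficients along these three blocks can be manipulated independently.

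The central technical tool is the combination of the $S$-curvature formula and the vanishing criterion for $\mathbf{S}\equiv 0$ developed earlier in the paper, together with the Killing vector field of constant length (KVFCL) machinery of \cite{DM3,DM5}. Restrictive CW-homogeneity produces, for each regular $X\in\mathfrak{g}$, a one-parameter family of CW translations generated by a right-invariant KVFCL; this translates into algebraic identities relating $\operatorname{ad}(X)$ to the normalized datum. Applying these identities first to $X\in\mathbf{V}_2$ (which acts trivially on $\mathfrak{t}$) and then to generic $X\in\mathfrak{t}$ should decouple the datum along the three blocks, while the $S$-curvature vanishing condition, which in Theorem \ref{main-thm-0} was shown to serve as a rigidity trigger, should then force the defining function $f$ to reduce to $c\sqrt{t_1^2+t_2^2}$, so that $F$ is Riemannian.

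The main obstacle is the middle block $\mathbf{V}_2^\perp\subset\mathbf{V}_1\cap\mathfrak{t}$: its elements lie in $\mathbf{V}_1$ but commute with every element of $\mathbf{V}_2$, so the KVFCL identities coming from $X\in\mathbf{V}_2$ alone do not detect them, whereas in Theorem \ref{main-thm-0} this block is absent and in Theorem \ref{main} it is small enough to be treated by ad hoc computation on the $2$-dimensional $\mathbf{V}_2$. In the general intermediate case one is forced to decompose $\mathbf{V}_1\cap\mathfrak{t}^\perp$ into joint $\operatorname{ad}(\mathbf{V}_2)$-weight spaces, to analyse how KVFCLs built from $X\in\mathfrak{t}\setminus\mathbf{V}_2$ interact with this finer grading, and to use the Jacobi identity to force the cross-block entries of the datum to vanish. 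An induction on $\operatorname{rank}\mathfrak{g}-\dim\mathbf{V}_2$ is natural but may break down when the root system of $\mathfrak{g}$ admits commutative subalgebras that are not nicely positioned relative to a base of simple roots; overcoming this combinatorial obstruction, possibly via a case-by-case classification over the simple Lie algebras, is what separates the conjecture from the two theorems and is the reason the authors leave it open.
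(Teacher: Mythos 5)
There is a fundamental issue here: the statement you are asked to prove is precisely the first \emph{conjecture} of the paper, which the authors explicitly leave open; the paper contains no proof of it, only the two special cases (Theorem \ref{main-thm-0}, where $\mathbf{V}_2$ is a full Cartan subalgebra, and Theorem \ref{main}, where $\dim\mathbf{V}_2=2$). Your proposal does not close this gap either: it is a strategy outline that, by your own admission in the final paragraph, leaves unresolved exactly the part that separates the conjecture from the known cases, namely the intermediate range $3\leq\dim\mathbf{V}_2<\operatorname{rank}\mathfrak{g}$ and the ``middle block'' $\mathbf{V}_2^{\perp}\subset\mathbf{V}_1\cap\mathfrak{t}$. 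No actual argument is given that decouples the datum along your three-block refinement, and no mechanism is exhibited that forces the cross-block terms to vanish; ``use the Jacobi identity'' and ``an induction on $\operatorname{rank}\mathfrak{g}-\dim\mathbf{V}_2$'' are not steps one can check.

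Beyond the missing core, two specific obstructions in the paper's own machinery show why the plan cannot be completed by simply ``bridging'' the two theorems. First, the proof of Theorem \ref{main-thm-0} extracts its rigidity not from $S\equiv 0$ alone but from the fact that $\mathbf{V}_2$ is a Cartan subalgebra, which via Corollary \ref{corollary-1} yields $\mathbf{V}_1=\mathbf{V}_2^{\perp_{\mathrm{bi}}}=\sum_{\lambda\in\Delta^+}\mathfrak{g}_\lambda$ and Schur's lemma on each root space; when $\dim\mathbf{V}_2<\operatorname{rank}\mathfrak{g}$ the condition $S\equiv 0$ gives no comparable structure on $\mathbf{V}_1$, which then contains part of $\mathfrak{t}$ and reducible $\operatorname{ad}(\mathbf{V}_2)$-modules. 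Second, the proof of Theorem \ref{main} leans at several places on $n_2=2$: the Key Lemma (Lemma \ref{key lemma}) is proved only for $\dim\mathbf{V}\leq 3$, and in the proof of Theorem \ref{theorem-7} the subspace $\mathbb{R}V+\mathbf{U}$ whose intersection with $\operatorname{Ad}(g)\mathfrak{c}_{\mathfrak{g}}(X)$ must be trivial has dimension exactly $n_2+1=3$; for $n_2\geq 3$ one would need the Key Lemma for $\dim\mathbf{V}\leq n_2+1$, which is not available and in fact fails for vectors $X$ with large centralizer once $n_2+1+\dim\mathfrak{c}_{\mathfrak{g}}(X)>\dim\mathfrak{g}$. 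Likewise Lemma \ref{lemma-9} and the construction of the linear map $l'$ are tailored to a $2$-dimensional $\mathbf{U}$. So the proposal correctly diagnoses where the difficulty lies, but it does not constitute a proof, and the conjecture remains open.
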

More generally, we can make a stronger one:
%As we know, all known examples of non-Riemannian restrictively CW-homogeneous
%$(\alpha,\beta)$-metrics are Randers, and they are related to Riemannian restrictively
%CW-homogeneous metrics by a navigation process. Unlike $(\alpha,\beta)$-metrics,
%there is no nontrivial navigation process which changes an $(\alpha_1,\alpha_2)$-metric
%to an $(\alpha_1,\alpha_2)$-metric.
\begin{conjecture}
Let $F$ be a left invariant  $(\alpha_1,\alpha_2)$-metric on a compact connected simple Lie group $G$ with a dimension decomposition $(n_1,n_2)$, where $n_1\geq n_2>1$. If $F$ is restrictively CW-homogeneous, then it must be Riemannian.
\end{conjecture}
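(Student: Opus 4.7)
The plan is to follow the general template established in the proofs of Theorem \ref{main-thm-0} and Theorem \ref{main}, but without relying on any subalgebra structure of $\mathbf{V}_2$. The starting point is the standard characterization of restrictive CW-homogeneity via Killing vector fields of constant $F$-length: the metric $F$ is restrictively CW-homogeneous if and only if through every point and in every tangent direction there passes such a Killing field. On a left invariant Finsler space these fields are generated by a subset of $\mathfrak{g}$ whose elements $v$ satisfy explicit bilinear equations in $y\in\mathfrak{g}$ coming from $g_y(v,[v,y])=0$. I would expand these equations for the $(\alpha_1,\alpha_2)$-metric using the $\alpha$-orthogonal decomposition $\mathfrak{g}=\mathbf{V}_1+\mathbf{V}_2$ and decompose them according to the projections onto $\mathbf{V}_1$ and $\mathbf{V}_2$, producing a family of scalar identities indexed by $y$.

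Next, I would invoke the S-curvature formula proved earlier in the paper. Restrictive CW-homogeneity forces the mean Berwald structure to be highly rigid, and by choosing $y$ first in $\mathbf{V}_1$, then in $\mathbf{V}_2$, and finally in a mixed form, I would try to show that $[\mathbf{V}_2,\mathbf{V}_2]\subset\mathbf{V}_2$ (reducing to the weaker conjecture) and then that this subalgebra must be commutative. Heuristically, the projection of $[y,y']$ with $y,y'\in\mathbf{V}_2$ onto $\mathbf{V}_1$ enters the Killing identities in a way that is incompatible with the existence of enough constant-length Killing fields unless that projection vanishes; similarly, any non-zero bracket inside $\mathbf{V}_2$ would contradict the S-curvature identity after averaging over a maximal torus.

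Once $\mathbf{V}_2$ is shown to be a commutative subalgebra, the argument would proceed by embedding $\mathbf{V}_2$ in a Cartan subalgebra $\mathfrak{h}$ of $\mathfrak{g}$, or alternatively by slicing $\mathbf{V}_2$ with a family of two-dimensional commutative subalgebras. Applying Theorem \ref{main-thm-0} to the enlarged decomposition $\mathfrak{g}=\mathbf{V}_1'+\mathfrak{h}$, or Theorem \ref{main} slice by slice, would show that $F$ is Riemannian along every such slice. Patching these quadratic relations on $f$ across overlapping slices should force $f(t_1,t_2)$ itself to be a quadratic form in $(t_1,t_2)$, hence $F$ globally Riemannian.

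The main obstacle is the reduction in the second paragraph: without any a priori compatibility between the $\alpha$-orthogonal decomposition and the Lie bracket, the bilinear Killing-field equations do not cleanly separate into $\mathbf{V}_1$- and $\mathbf{V}_2$-components. One must therefore identify a geometric invariant, most plausibly built out of the S-curvature together with the second fundamental form of the distribution $\mathbf{V}_2$, which is both preserved by the restrictive CW-homogeneous structure and sensitive to the failure of $\mathbf{V}_2$ to be a subalgebra. Finding the right such invariant, rather than the subsequent case analysis, is where I would expect the real difficulty to lie.
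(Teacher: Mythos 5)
This statement is stated in the paper only as a conjecture: the authors prove it solely in the two special cases where $\mathbf{V}_2$ is a Cartan subalgebra (Theorem \ref{main-thm-0}) and where $\mathbf{V}_2$ is a $2$-dimensional commutative subalgebra (Theorem \ref{main}), and they explicitly leave the general case open. So there is no proof in the paper to compare with, and your proposal does not supply one either. The decisive step in your plan is the reduction in your second paragraph, namely that restrictive CW-homogeneity forces $[\mathbf{V}_2,\mathbf{V}_2]\subset\mathbf{V}_2$ and then commutativity of $\mathbf{V}_2$; you give no argument for this beyond a heuristic, and you concede at the end that the required invariant is missing. What the paper's machinery actually yields without extra hypotheses is only the vanishing of the S-curvature (Theorem \ref{theorem-4}) together with Corollary \ref{corollary-1}, i.e.\ $\langle[y',y''],y'\rangle=\langle[y',y''],y''\rangle=0$ for $y'\in\mathbf{V}_1$, $y''\in\mathbf{V}_2$; these identities constrain $\alpha$ but do not say anything about the $\mathbf{V}_1$-component of $[\mathbf{V}_2,\mathbf{V}_2]$, so the claimed reduction to the weaker conjecture is a genuine gap, and indeed it is exactly the point at which the problem is open.

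Even if that reduction were granted, the concluding ``patching'' step would not work as described. Theorem \ref{main-thm-0} applies only when $\mathbf{V}_2$ itself is a Cartan subalgebra in the normalized datum of $F$, and Theorem \ref{main} only when $\mathbf{V}_2$ is exactly $2$-dimensional. If you enlarge $\mathbf{V}_2$ to a Cartan subalgebra $\mathfrak{h}$, or cut it into $2$-dimensional slices, the metric $F=f(\alpha_1,\alpha_2)$ is in general no longer an $(\alpha_1,\alpha_2)$-metric with respect to the new decomposition: $f(\alpha_1,\alpha_2)$ is not a function of the two norms attached to $\mathfrak{g}=\mathbf{V}_1'+\mathfrak{h}$ or to a slice decomposition, so neither theorem can be invoked. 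Moreover both theorems conclude that $F$ is Riemannian outright; there is no intermediate notion of ``Riemannian along a slice'' that they produce and that could be patched into a quadratic form for $f$. To make progress on this conjecture one would need either a new criterion for KVFCLs that does not assume $\mathbf{V}_2$ is a subalgebra (the analogue of Theorem \ref{theorem-6}), or a strengthening of the Key Lemma \ref{key lemma} adapted to higher-dimensional $\mathbf{V}_2$, neither of which your outline provides.
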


%The good normalized datum can also be defined  for homogeneous $(\alpha,\beta)$-metrics. Almost all the proofs in this paper work for %$(\alpha,\beta)$-metrics, after some minor
%changes. We will not write out details here, but a a brief survey in the last part of the paper. ( SHOULD WE JUST DELETE ALL %$(\alpha,\beta)$-STUFF IN THIS WORK, AND PUT IT IN THE
%NEXT PAPER FOR: 1. DISCUSSING GOOD NORMALIZED DATUM, 2. PROVE THE SEMI-SIMPLE CASE?)

The paper is organized as follows.
In Section 2, we recall some definitions and known results in Finsler geometry. Section 3 is devoted to defining $(\alpha_1,\alpha_2)$-metrics and introducing the good normalized datum
of a non-Riemannian homogeneous $(\alpha_1,\alpha_2)$-space. In Section 4, we present a S-curvature
formula of homogeneous $(\alpha_1,\alpha_2)$-spaces. In Section 5, we use the S-curvature
to  study the restrictive CW-homogeneity of left invariant non-Riemannian
$(\alpha_1,\alpha_2)$-metrics of Case 1 and prove  Theorem \ref{main-thm-0}. In Section 6, we study restrictive CW-homogeneity of left invariant
$(\alpha_1,\alpha_2)$-metrics of Case 2 and prove  Theorem \ref{main}.
Finally, in Section 7, we prove a key lemma  stated in Section 6, completing the proof of all the main results in this paper.

\section{Preliminaries}
\subsection{Minkowski norms and Finsler metrics}
In this section, we recall the notions of Minkowski norms and Finsler metrics.
In this paper, all manifolds are assumed to be connected and smooth.

\begin{definition}
A Minkowski norm on a $n$-dimensional real vector space $\mathbf{V}$ is a real continuous function $F$ on
$\mathbf{V}$  which is smooth on $\mathbf{V}\backslash\{0\}$ and satisfies the following
conditions:
\begin{description}
\item{\rm 1.}\quad $F$ is non-negative: $F(u)\geq 0$, $\forall u\in \mathbf{V}$;
\item{\rm 2.}\quad  $F$ is positively homogeneous of degree one: $F(\lambda u)=\lambda F(u)$, $\forall\lambda >0$;
\item{\rm 3.}\quad $F$ is strongly convex. Namely, given a basis $u_1, u_2, \cdots, u_n$ of
$\mathbf{V}$, write
$F(y)=F(y^1, y^2, \cdots, y^n)$ for
$y=y^1u_1+y^2u_2+\cdots+y^nu_n$. Then the Hessian matrix
$$(g_{ij}):=\left([\frac{1}{2}F^2]_{y^iy^j}\right)$$
is positive-definite at any point of $V\backslash\{0\}$.
\end{description}
\end{definition}

\begin{definition}
 A Finsler metric on a manifold $M$ is a continuous function $F$: $TM\to
[0,\infty)$ such that
\begin{description}
\item{\rm 1)}\quad $F$ is smooth  on the slit tangent bundle $TM\backslash 0$;
\item{\rm 2)}\quad The restriction of $F$ to any tangent space  $T_xM$, $x\in M$,  is a
Minkowski norm.
\end{description}
In this case we say that $(M, F)$ is a Finsler space.
\end{definition}

For fundamental properties of Finsler spaces, we refer to \cite{BCS00, CS04} and \cite{SH2}.

  On a Finsler space $(M, F)$ the distance function $d(\cdot,\cdot)$ can be  defined similarly as in the Riemannian case. Note that  the reversibility of $d(\cdot,\cdot)$, i.e., the condition that  $d(x,x')=d(x',x)$, for  $x,x'\in M$,  may not be satisfied.

Here are some examples which are relevant to this work.

A Randers metric $F$ is defined as $F=\alpha+\beta$, where $\alpha$ is a Riemannian metric, and $\beta$ is an 1-form with  $\alpha$-length  less than 1 everywhere. Randers metrics are computable and it has always been a central focus in  Finsler geometry. Moreover, Randers metrics have been applied to many scientific fields, see for example \cite{BCS00}.

An $(\alpha,\beta)$-metric $F$ is defined as $F=\alpha\phi(\frac{\beta}{\alpha})$, where $\phi$ is a real function on $\mathbb{R}$,  $\alpha$ is a Riemannian metric and $\beta$  is a 1-form on $M$. Note that for any $x\in M$ and $Y\in T_x(M)\backslash\{0\}$, $\frac{\beta (Y)}{\alpha (Y)}\in \mathbb{R}$, and it is positive homogeneous of degree $0$. Therefore $F$ is positive homogeneous of degree $1$. For the smoothness and strong convexity of $F$,  the function $\phi$ is required to be smooth anywhere involved in the definition of the metric, and  satisfy the following inequality:
$$\phi(s)-s\phi'(s)+(b^2-s^2)\phi''(s)>0,$$
for any $s$ and $b$ such that there is an $x\in M$ with $|s|\leq |b|\leq ||\beta(x)||_{\alpha(x)}$ (see \cite{CS04}). Obviously, a Randers metric is an $(\alpha,\beta)$-metric with $\phi(s)=1+s$ Note that  $\phi$ can also be of the form $\sqrt{k_1+k_2s^2}+k_3s$, where $k_1, k_2, k_3$ are constants.

\subsection{Curvatures in Finsler geometry}
Curvature is the most important concept in geometry. In Finsler geometry, there are a lot
of curvatures, including Riemannian ones and non-Riemannian ones. Here we
only mention those curvatures related to the topics in this paper.

First we recall the concept of some the non-Riemannian curvatures.

 Let $(M,F)$ be an $n$-dimensional Finsler space. The Busemann-Hausdorff
volume can be globally defined on $M$ as follows.
Given a local coordinates $\{x=(x^i)\in M,y=y^j\partial_{x^j}\in TM_x\}$, we define $dV_{BH}=\sigma(x)dx^1\cdots dx^n$, where
$$\sigma(x)=\frac{\mbox{Vol}(B_n(1))}{\mbox{Vol}\{(y^i)\in\mathbb{R}^n|F(x,y^i\partial_{x^i})<1\}},$$
here $\mbox{Vol}$ is the volume with respect to the standard Euclidian metric on
$\mathbb{R}^n$, and $B_n(1)$ is the  unit ball in $\mathbb{R}^n$.
Although the coefficient function $\sigma(x)$ is only locally defined and
depends on the choice of local coordinates $x=(x^i)$, the distortion function
\begin{equation}\label{-6}
\tau(x,y)=\ln\frac{\sqrt{\det(g_{ij}(x,y))}}{\sigma(x)}
\end{equation}
on $TM\backslash 0$ is independent of the local coordinates and is globally defined.

The S-curvature $S(x,y)$ is a function on $TM\backslash 0$ which is defined to be the derivative of
$\tau(x,y)$ in the direction of the geodesic spray, which is also a globally defined
vector field on $TM\backslash 0$. In local coordinates, the geodesic spray can be represented as
$G=y^i\partial_{x^i}-2G^i\partial_{y^i}$, where
$$G^i=\frac{1}{4}g^{il}({[F^2]}_{x^k y^l}y^k-{[F^2]}_{x^k}).$$

The derivatives of $\tau(x,y)$ in the $y$-direction define another non-Riemannian
curvature called the mean Cartan tensor. In local coordinates, it can
be represented as
$$I_y(u)=u^i\partial_{y^i}\ln\sqrt{\det(g_{pq}(y))}, \forall u=u^i\partial_{y^i}.$$
Recall that $F$ is a Riemannian metric if and only if the mean Cartan
tensor vanishes identically \cite{D53}.

Now we turn to the Riemannian curvature.

The Riemannian curvature can be defined using either the Jacobi field or  the structure
equations. In local coordinates, it can be interpreted as the linear transformations of $T_x(M)$ defined by
$R_y=R^i_k(y)\partial_{x^i}\otimes dx^k:T_x M\rightarrow T_x M$
 for any nonzero tangent vector $y\in T_x M$, where
$$R^i_k(y)=2\partial_{x^k}G^i-y^j\partial^2_{x^j y^k}G^i+2G^j\partial^2_{y^j y^k}G^i
-\partial_{y^j}G^i\partial_{y^k}G^j.$$

Furthermore, let $P$ be a tangent plane in $T_x M$ containing a nonzero vector $y$. Suppose $P$ is spanned   by $y$ and $u$. Then the flag curvature of the flag $(P, y)$ is  defined as
$$K(P,y)=\frac{\langle R_y(u),u\rangle_y}
{\langle y,y\rangle_y\langle u,u\rangle_y-\langle y,u\rangle_y^2},$$
where $\langle \cdot,\cdot\rangle_y$ is the inner product defined by $(g_{ij}(y))$.
Obviously the flag curvature is the generalization of Riemannian sectional curvature
in Finsler geometry.

Shen indicated the following important observation which gives an elegant expression of the Riemannian curvature of
a Finsler space in terms of  the Riemannian curvature of the osculating Riemannian metrics \cite{SH3}.

\begin{proposition}\label{proposition-1}
Let $F$ be a Finsler metric on $M$, and $Y$ be a non-zero geodesic field defined on an open subset $U$ containing $x\in M$. Denote the Riemannian metric defined by $(g_{ij}(Y(\cdot)))$
as $\hat{g}$.
Then we have $R_y=\hat{R}_y$, where $y=Y(x)$, and $\hat{R}_y$ is the Riemannian curvature of $\hat{g}$.
\end{proposition}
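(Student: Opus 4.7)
My plan is to establish the identity $R^i_k(Y(x)) = \hat{R}^i_k(Y(x))$ by a direct comparison of the two sides of
\[
R^i_k(y) = 2\partial_{x^k} G^i - y^j \partial^2_{x^j y^k} G^i + 2 G^j \partial^2_{y^j y^k} G^i - \partial_{y^j} G^i \, \partial_{y^k} G^j
\]
evaluated at $y = Y(x)$. The structural facts I will rely on are: (i) the Cartan tensor $C_{ijk}(x,y) := \tfrac{1}{2}\partial_{y^k} g_{ij}(x,y)$ is totally symmetric and satisfies $C_{ijk}(x,y)\,y^k = 0$ (a consequence of the $0$-homogeneity of $g_{ij}$ in $y$); (ii) $G^i$ is $2$-homogeneous in $y$, so $y^j\partial_{y^j}G^i = 2G^i$ and $y^j\partial^2_{y^j y^k}G^i = \partial_{y^k}G^i$; (iii) $Y$ is an $F$-geodesic field on $U$, so its integral curves are $F$-geodesics.

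The first step is to verify the pointwise equality $G^i(x, Y(x)) = \hat{G}^i(x, Y(x))$ on $U$. Substituting into the spray formula $G^i = \tfrac{1}{4} g^{il}\bigl(y^k[F^2]_{x^k y^l} - [F^2]_{x^l}\bigr)$ at $y = Y$ and comparing with the Riemannian analogue built from $\hat{g}^{il}(x) = g^{il}(x, Y(x))$, the only discrepancies arise from the chain-rule identity $\partial_{x^k}\hat{g}_{ij}(x) = \partial_{x^k} g_{ij}(x,y)\big|_{y=Y(x)} + 2C_{ijm}(x,Y)\,\partial_{x^k}Y^m$. In each instance the extra Cartan-tensor term appears contracted against $Y^i$ or $Y^iY^j$, and therefore vanishes by the total symmetry of $C$ combined with $C_{ijk}(x,Y)Y^k = 0$. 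As a corollary, the $F$-geodesic equation $\ddot\gamma^i + 2G^i(\gamma,\dot\gamma) = 0$ for integral curves of $Y$ becomes the $\hat{g}$-geodesic equation $\ddot\gamma^i + \hat\Gamma^i_{jk}(\gamma)Y^jY^k = 0$, so $Y$ is also a $\hat{g}$-geodesic field.

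The second step is to propagate this pointwise equality to the four individual terms of $R^i_k$ at $(x, Y(x))$. Differentiating $G^i(x, Y(x)) = \hat{G}^i(x, Y(x))$ in $x^k$ and repeating the Cartan-tensor bookkeeping of Step~1 on the mixed derivative handles the $\partial_{x^k} G^i$ and $y^j\partial^2_{x^j y^k}G^i$ terms. The main obstacle is the cubic combination $2 G^j\partial^2_{y^j y^k}G^i - \partial_{y^j}G^i\,\partial_{y^k}G^j$: neither factor of the non-linear piece agrees with its Riemannian counterpart individually (the discrepancy of $\partial^2_{y^j y^k}G^i$ at $y=Y$ is measured by the Landsberg tensor, and $\partial_{y^j}G^i(x,Y)$ differs from $\partial_{y^j}\hat G^i(x,Y)$ by a term $\sim \hat{g}^{ia}G^b(x,Y)C_{abj}(x,Y)$), but the two discrepancies cancel against each other in the specific combination appearing in $R^i_k$. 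This cancellation is forced by rewriting the combination using the $2$-homogeneity identities $y^j\partial^2_{y^jy^k}G^i = \partial_{y^k}G^i$ and $y^j\partial_{y^j}G^i = 2G^i$, together with $G^j(x,Y) = \hat{G}^j(x,Y)$ and one further application of $C_{ijk}(x,Y)Y^k = 0$; the Landsberg contribution in $2G^j\partial^2_{y^jy^k}G^i$ then collapses against the corresponding Cartan contribution in $\partial_{y^j}G^i\,\partial_{y^k}G^j$. Reassembling the four matched terms produces $R^i_k(Y(x)) = \hat{R}^i_k(Y(x))$, as required.
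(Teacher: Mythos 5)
The paper itself offers no proof of this proposition --- it is quoted from Shen \cite{SH3} --- so your argument has to stand on its own. Your Step 1 is correct: writing $\gamma^i_{jk}(x,y)$ for the formal Christoffel symbols of $g_{ij}(x,y)$, the chain-rule terms $2C_{ijm}(x,Y)\partial_{x^k}Y^m$ appearing in $\partial_{x^k}\hat g_{ij}$ are all annihilated by $C_{ijk}(x,Y)Y^k=0$ once contracted with $Y^jY^k$, so $G^i(x,Y(x))=\hat G^i(x,Y(x))$ on $U$ (this part needs no geodesic hypothesis), and the integral curves of $Y$ are then $\hat g$-geodesics as well.

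The gap is in Step 2, and it is not merely an omitted computation: the mechanism you describe is wrong, and the steps that rely on it are circular. Put $N^i_j=\partial_{y^j}G^i$, $\hat N^i_j=\partial_{y^j}\hat G^i$, $C^i_{jk}=g^{il}C_{ljk}$. From the identity $N^i_j=\gamma^i_{jk}y^k-2C^i_{jk}G^k$ (a consequence of $2G^i=\gamma^i_{jk}y^jy^k$ and $\partial_{x^r}\bigl(C_{ljs}y^s\bigr)=0$) and from $\hat N^i_j(x,Y)=\hat\Gamma^i_{jk}(x)Y^k=\gamma^i_{jk}(x,Y)Y^k+C^i_{jm}(x,Y)\,Y^k\partial_{x^k}Y^m$ (the other two chain-rule Cartan terms again die by $C_{ijk}(x,Y)Y^k=0$), the geodesic-field equation $Y^k\partial_{x^k}Y^m=-2G^m(x,Y)$ yields $N^i_j(x,Y(x))=\hat N^i_j(x,Y(x))$ on all of $U$. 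Hence your claimed discrepancy $\sim\hat g^{ia}G^b(x,Y)C_{abj}(x,Y)$ between $\partial_{y^j}G^i$ and $\partial_{y^j}\hat G^i$ along $Y$ is in fact zero, and the advertised collapse of a ``Landsberg contribution'' in $2G^j\partial^2_{y^jy^k}G^i$ against a ``Cartan contribution'' in $\partial_{y^j}G^i\,\partial_{y^k}G^j$ is not what happens; no computation is given that would force such a cancellation, and the homogeneity identities you invoke do not apply to the contraction with $G^j$, which is not proportional to $Y^j$. Moreover, your handling of the first two terms presupposes exactly the equality you deny: differentiating $G^i(x,Y(x))=\hat G^i(x,Y(x))$ in $x^k$ gives only $\partial_{x^k}G^i(x,Y)-\partial_{x^k}\hat G^i(x,Y)=\bigl[\hat N^i_m(x,Y)-N^i_m(x,Y)\bigr]\partial_{x^k}Y^m$, so without $N=\hat N$ along $Y$ you cannot match $\partial_{x^k}G^i$, still less $Y^j\partial^2_{x^jy^k}G^i$. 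The repair is to prove $N^i_k(x,Y(x))=\hat N^i_k(x,Y(x))$ first; then $\partial_{y^j}G^i\partial_{y^k}G^j$ matches outright, $\partial_{x^k}G^i$ matches by the displayed chain-rule identity, and the remaining combination $-Y^j\partial^2_{x^jy^k}G^i+2G^j\partial^2_{y^jy^k}G^i$ equals, by the chain rule and the geodesic equation applied to both $F$ and $\hat g$, the quantity $-Y^j\partial_{x^j}\bigl[N^i_k(\cdot,Y(\cdot))\bigr]$, respectively $-Y^j\partial_{x^j}\bigl[\hat N^i_k(\cdot,Y(\cdot))\bigr]$, and these agree.
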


In particular, let $P\subset T_x M$ be a tangent plane containing $y$. Denote the sectional curvature of $\hat{g}$ as $\hat{K}$. Then  it follows from Proposition \ref{proposition-1}  that $K(P,y)=\hat{K}(P)$.

\subsection{Homogeneous Fisnler space}

An isometry $\rho$ of a Finsler metric $F$ on a manifold $M$ is a diffeomorphism of $M$ which preserves $F$, i.e., $\rho^* F=F$. This is equivalent to the condition that $\rho$ preserves the distance function $d(\cdot,\cdot)$, i.e., $d(\rho(x),\rho(x'))=d(x,x')$ for
any $x,x'\in M$; see \cite{DH02}.

The group of all isometries of $(M,F)$ is denoted as $I(M,F)$. In \cite{DH02} it is  proved  that $I(M, F)$  is a Lie transformation group of $M$. Now we give a different  point of view of this important result, namely, $I(M,F)$ can be viewed as a closed subgroup of the isometry group of a Riemannian metric on   $M$. More precisely,  define a Riemannian metric $F'$ on $M$ by averaging $F$ as the following:
$$F'^2(x,y')=\int_{F(y)=1}\langle y',y'\rangle_y d\mbox{vol}_y,$$
where $d\mbox{vol}$ is the volume form of the indicatrix in $T_x M$, endowed with
the induced Riemannian metric defined by the Hessian matrix $(g_{ij})$ on $T_x M\backslash \{0\}$.
Then any isometry of $(M,F)$ must be an isometry of $(M,F')$. Moreover, it is easily seen that $I(M,F)$ is a closed
subgroup of $I(M,F')$. Therefore $I(M, F)$ is  a Lie transformation group of $M$; see \cite{Hel}.

The maximal connected subgroup of $I(M, F)$  is called the connected isometry group of $(M,F)$, and  is denoted as $I_0(M,F)$. The Finsler space $(M,F)$ is called  homogeneous  if $I(M,F)$ acts transitively on $M$. Since $M$ is connected, the above definition is equivalent to the condition that $I_0(M, F)$ is transitive on $M$ \cite{Hel}. In general, there may exist  some proper subgroups of $I_0(M, F)$ which are also transitive on $M$. For any closed subgroup $G$ of $I_0(M,F)$ which acts transitively on $M$, the manifold $M$ can  be written as the quotient $M=G/H$, where $H$ is the isotropy subgroup of $G$ at a fixed point  $x\in M$. Denote $\mathfrak{g}=\mathrm{Lie}(G)$, $\mathfrak{h}=\mathrm{Lie}(H)$. Then the tangent space  $T_x M$ can be identified with the quotient space $\mathfrak{m}=\mathfrak{g}/\mathfrak{h}$.

As an explicit example, let $F$ be a left invariant metric $F$ on a connected Lie group $G$. Then $(G, F)$  is a homogeneous Finsler space,
since the  connected isometry group $I_0(G,F)$ contains $L(G)$, which acts transitively
on $G$. Note that in general the full connected group of isometries of $(G, F)$ is larger than $L(G)$; see \cite{OT76}.

The study of homogeneous Finsler space helps us  understand the intrinsic nature of Finsler geometry without carrying out  complicated calculations on curvatures and tensors, since the related quantities can  generally  be reduced to  tensor vectors on the tangent space $\mathfrak{m}$. For example, a homogeneous Randers metric $F=\alpha+\beta$ on $M=G/H$ can be determined by the restrictions of $\alpha$ and $\beta$ on $\mathfrak{m}$, which is
an $\mathrm{Ad}(H)$-invariant linear metric and an $\mathrm{Ad}(H)$-invariant vector in $\mathfrak{m}^*$, respectively. The isometry group $I(M,F)$ consists of the elements in  $I(M,\alpha)$ which preserve the $1$-form  $\beta$. However,  when dealing with $(\alpha,\beta)$-metrics, the situation becomes much more complicated. This is mainly due to the fact that the representation of the metric $F$ by $\alpha$, $\beta$ and $\phi$ is generally
not unique. In fact, it may happen that, when we write a $G$-invariant $(\alpha,\beta)$-metric $F$ on a coset space $G/H$ as $F=\alpha\phi(\frac{\beta}{\alpha})$, neither the globally defined $\alpha$  nor the $1$-form $\beta$ is
$G$-invariant. This means that  none of  their restrictions in $\mathfrak{m}$ is $\mathrm{Ad}(H)$-invariant. We will meet a similar situation in the study of  $(\alpha_1,\alpha_2)$-metrics in this
paper. To settle this problem, we introduce the notion of a good datum for a homogeneous Finsler
metric.
%
%
%For any Finsler manifold $(M,F)$, the group of all isometries for the metric $F$, denoted as $I(M,F)$, is a Lie group. The Lie algebra of $I(M,F)$ is the space of all
%Killing vector fields for $F$. $M$ is called a homogeneous Finsler space, or $F$ is
%called homogeneous, when $I(M,F)$ acts transitively on $M$. A homogeneous Finsler space $M$ can be presented as $G/H$ for any closed subgroup $G$ of $I(M,F)$ which acts transitively on $M$, and correspondingly the isotropy group $H$ for the fixed $x\in M$. Because we will only consider connected $M$, the maximal connected subgroup $I_0(M,F)$ of $I(M,F)$, or its connected subgroups are enough for our discussion.
%
%Obviously a homogeneous $F$ on $M=G/H$ is one-to-one correspondent to the Minkowski norm
%$F$ defines on $TM_x$ which is invariant for the $\mathrm{Ad}(H)$-invariant on $TM_x$. Here the tangent space $TM_x$ can be identified with the quotient $\mathfrak{m}=\mathfrak{g}/\mathfrak{h}$, or a $\mathrm{Ad}(H)$-invariant complement of $\mathfrak{h}$ in $\mathfrak{g}$, with the naturally induced $\mathrm{Ad}(H)$-actions.

Let $F$ be a homogeneous Finsler metric on a manifold $M$. In many cases, we need to use  some  data to define the  metric $F$. For example, if $F$ is a Randers metric, then we need  a  pair $(\alpha,\beta)$; if $F$ is an $(\alpha,\beta)$-metric, then we need a triple $(\phi,\alpha,\beta)$. The datum
is called a good datum if it is invariant under the action of $I_0(M,F)$.
It is obvious that the restriction of a good datum to the tangent space $T_x(M)$ is invariant under the action of the isotropy subgroup $I_x(M, F)$.

For convenience, we will usually  use the same notations to denote a good datum and its restriction to $T_xM$.
The reason to introduce the notion of a good datum is that, using a good datum for the homogeneous Finsler metric, we can safely reduce the computation to the given tangent space without losing controls of its global homogeneity. It is obvious that  for a homogeneous Randers metric $F=\alpha+\beta$,
the pair $(\alpha,\beta)$ is always a good datum.

%For a homogeneous Randers metric $F=\alpha+\beta$, the pair $(\alpha,\beta)$ is always a
%good datum. For a $(\alpha,\beta)$-metric, the original given datum may not be good. The
%discussion in Section 3 implies how to find a good substitute for it.

\subsection{CW-translations and CW-homogeneity of Finsler spaces}

 We first recall the definition of a Clifford-Wolf translation  on a Finsler space. An isometry $\rho$ of $(M,F)$ is called a Clifford-Wolf translation (CW-translation for short) if it moves all points  the same distance, i.e., $d(x,\rho(x))= \mbox{const}$.
Similarly as in the Riemannian case,  we can consider the Clifford-Wolf homogeneity in Finsler geometry.

\begin{definition}
A Finsler space $(M,F)$ is called Clifford-Wolf homogeneous (CW-homogeneous) if for any two points $x,x'\in M$, there is a CW-translation $\rho$ such that $\rho(x)=x'$.
\end{definition}

A CW-homogeneous Finsler space is obviously a homogeneous Finsler space.
The study of CW-translations and CW-homogeneity in Finsler geometry was initiated in our
consideration (see \cite{DM1}-\cite{DM3} and \cite{DM4}) on the interrelation between CW-translations and Killing vector fields of
constant length (KVFCLs for short),  generalizing the work of V.N. Berestovskii and
Yu.G.Nikonorov in the Riemannian case; see \cite{BN081,BN082,BN09}. We refer to \cite{Wo1, WO64, Wo2} for more information on the study of CW-translations on Riemannian manifolds.
 Since in this paper we will consider  only compact Finsler spaces, we just restate Theorem 3.3 and Theorem 3.4 in \cite{DM1} as the following theorem.

\begin{theorem}\label{theorem-1}
 Let $(M,F)$ be a compact Finsler space and $X$ be a KVFCL on $M$. Then the one-parameter group of transformations $\varphi_t$ generated by $X$  are CW-translations when $t>0$ is sufficiently small. Moreover, there exists a  neighborhood $\mathcal{N}$ of the identity map in $I_0(M,F)$, such that any CW-translation $\rho\in\mathcal{N}$ is generated by a KVFCL with  $\varphi_t\in \mathcal{N}$ for
$t\in [0,1]$ and $\varphi_1=\rho$. Furthermore, all $\varphi_t$s are CW-translations
for $t\in [0,1]$.
\end{theorem}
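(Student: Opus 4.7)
The plan is to prove the theorem in two directions, exploiting compactness of $M$ throughout. Compactness provides a uniform positive lower bound $r_0$ on the injectivity radius, uniform strong convexity of the fundamental tensor on the unit sphere bundle, and control over continuous functions attached to isometries near the identity.

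For the forward direction, I would first establish the Finsler analogue of the classical fact that integral curves of a KVFCL are geodesics. Since $X$ is Killing, the flow $\varphi_t$ acts by isometries and satisfies $(\varphi_t)_\ast X = X$, so $|X|_F$ is preserved along each orbit; combined with the hypothesis that $|X|_F$ is globally constant, the orbit $t\mapsto \varphi_t(x)$ has constant speed. The Killing condition expressed via the Chern connection, together with constant length, then forces this orbit to satisfy the geodesic equation, exactly as in the Riemannian case. For $t$ small enough that $t|X|_F<r_0$, the geodesic segment minimizes, so
\[
d(x,\varphi_t(x)) = t|X|_F,
\]
independent of $x$, showing that $\varphi_t$ is a CW-translation for small $t>0$.

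For the backward direction, I would use that $I_0(M,F)$ is a Lie group (by \cite{DH02}) to choose $\mathcal{N}$ as the image under the Lie exponential of a small ball in the Killing algebra $\mathfrak{i}_0$, arranged so that (i) every $\varphi\in\mathcal{N}$ satisfies $\sup_x d(x,\varphi(x))<r_0/2$, and (ii) for every $\rho\in\mathcal{N}$ written uniquely as $\rho=\varphi_1=\exp(X)$, the entire arc $\{\varphi_t:t\in[0,1]\}$ remains in $\mathcal{N}$. Given a CW-translation $\rho\in\mathcal{N}$ with displacement $c$, the orbit of $X$ through $x$ has length
\[
\int_0^1 |X(\varphi_t(x))|_F\,dt=|X(x)|_F,
\]
using isometry invariance of $|X|_F$ along orbits, so $c=d(x,\varphi_1(x))\leq |X(x)|_F$ for every $x$. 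To obtain the reverse inequality I would apply the analogous estimate to each intermediate time $s\in[0,1]$: the isometry identity $d(\varphi_s(x),\varphi_1(x))=d(x,\varphi_{1-s}(x))$ combined with the triangle inequality gives
\[
c \leq d(x,\varphi_s(x))+d(x,\varphi_{1-s}(x)) \leq s|X(x)|_F+(1-s)|X(x)|_F=|X(x)|_F,
\]
and by the uniqueness of the minimizing geodesic from $x$ to $\rho(x)$ (guaranteed since $c<r_0$), the equality case pins the orbit $t\mapsto \varphi_t(x)$ to coincide with this minimizing geodesic, yielding $d(x,\varphi_t(x))=t|X(x)|_F=tc$ for all $t$ and $x$. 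In particular $|X|_F\equiv c$, so $X$ is a KVFCL, and the forward direction then gives that every $\varphi_t$, $t\in[0,1]$, is a CW-translation.

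The main obstacle is the rigidity step inside the backward direction: upgrading the inequality $c\leq |X(x)|_F$ to equality by forcing the orbit of $X$ to be the minimizing geodesic from $x$ to $\rho(x)$. In the Riemannian case, this follows from the equality case of the triangle inequality together with uniqueness of minimizers below the injectivity radius; in the Finsler setting we must substitute strict convexity of the Minkowski norms, uniform continuity estimates derived from compactness, and the non-symmetric distance $d$. I expect the proof to parallel the Berestovski\v\i--Nikonorov Riemannian argument, replacing the Levi-Civita connection by the Chern connection and being careful about the directional asymmetry of $d$.
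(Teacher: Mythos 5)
Note first that the paper does not actually prove this statement: it is quoted from Theorems 3.3 and 3.4 of \cite{DM1}, so your proposal can only be judged on its own terms. Your forward direction is essentially the standard argument and is fine modulo one citation: the integral curves of a Killing field of constant length are geodesics (this Finsler analogue needs to be proved or quoted from \cite{DM1}/\cite{DE12}), and then compactness gives a uniform positive forward injectivity radius $r_0$, so for $tc<r_0$ the flow segment from $x$ to $\varphi_t(x)$ is the minimizing geodesic and $d(x,\varphi_t(x))=tc$ for every $x$.

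The backward direction, however, contains a genuine gap, located exactly where you flag it. Your chain $c\le d(x,\varphi_s(x))+d(x,\varphi_{1-s}(x))\le s|X(x)|_F+(1-s)|X(x)|_F=|X(x)|_F$ merely re-derives the one-sided bound $c\le |X(x)|_F$ that you already obtained from the orbit length; no equality is ever established, so there is no ``equality case'' to exploit, and nothing forces the orbit $t\mapsto\varphi_t(x)$ (a curve of length $|X(x)|_F$, possibly strictly larger than $c$) to coincide with the minimizing geodesic from $x$ to $\rho(x)$. The reverse inequality $|X(x)|_F\le c$ is the real content of the theorem, and it genuinely requires the near-identity hypothesis to enter in an essential way: on the round $S^3$, the antipodal map is a CW-translation and equals $\exp(X)$ for the Killing field rotating by angles $\pi$ and $3\pi$ in two orthogonal planes, and this $X$ does not have constant length; so a proof that only uses ``displacement below the injectivity radius plus uniqueness of minimizers,'' as yours does, cannot be correct as stated. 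The argument that closes this step (in \cite{DM1}, adapting Berestovskii--Nikonorov) is of a different nature: one works with the unique minimizing geodesic from $x$ to $\rho(x)$, uses the constancy of the displacement in a shortcut (``no corner'') argument to show that $\rho$ translates these geodesics into one another, and only then identifies the generating Killing field and its constant length; a direct comparison of orbit length with displacement, as in your sketch, does not suffice.
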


There is a weaker version of  the CW-homogeneity, called restrictive CW-homogeneity. In this paper, we will mainly   deal with restrictive CW-homogeneity.

\begin{definition}
A compact Finsler space $(M,F)$ is called restrictively CW-homogeneous if there exists $\delta >0$ such that for any $x,x'\in M$, with $d(x,x')<\delta$, there exists  a CW-translation $\rho$ such that $\rho (x)=x'$.
\end{definition}

Theorem \ref{theorem-1} provides
 an explicit and equivalent description of restrictive CW-homogeneity for compact Finsler spaces. More precisely, we have

\begin{proposition}\label{proposition-0}
A compact Finsler space $(M,F)$ is restrictive CW-homogeneous if and only if  any tangent vector of $M$ can be extended to a KVFCL.
\end{proposition}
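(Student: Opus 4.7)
The plan is to prove the two implications separately, using Theorem \ref{theorem-1} as the bridge between KVFCLs and CW-translations, together with the standard fact that a KVFCL's one-parameter flow has geodesic orbits of constant speed equal to its (constant) length.

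For the ``if'' direction, given $x, x' \in M$ at small distance, I would let $v$ be the initial velocity of the minimizing geodesic from $x$ to $x'$, extend $v$ to a KVFCL $X$, and use its flow $\varphi_t^X$ to produce the required CW-translation. The key observation is that the $X$-orbit through $x$ is precisely the geodesic $s \mapsto \exp_x(sv)$, so the time-$1$ map sends $x$ to $\exp_x(v) = x'$. For this map to actually be a CW-translation via Theorem \ref{theorem-1}, one needs $F(v)$ below a uniform threshold; such a uniform $\delta$ exists by compactness of $M$ combined with the fact that $X$ has constant length $F(v)$ globally, so a rescaling argument turns the ``sufficiently small $t$'' in Theorem \ref{theorem-1} into a uniform bound on $F(X)$ that works across all KVFCLs.

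For the ``only if'' direction, I would fix $v \in T_xM$ and, for small $t > 0$, use restrictive CW-homogeneity to pick a CW-translation $\rho_t$ with $\rho_t(x) = \exp_x(tv)$. Its CW-constant equals $d(x, \exp_x(tv)) = tF(v)$, so $d(y, \rho_t(y)) = tF(v)$ uniformly in $y$; by the Myers-Steenrod-type result of \cite{DH02}, which puts the compact-open topology on $I(M, F)$, this forces $\rho_t \to \mathrm{id}$ in $I_0(M, F)$. Hence for $t$ small enough, $\rho_t$ lies in the neighborhood $\mathcal{N}$ of Theorem \ref{theorem-1} and equals $\varphi_1^{X_t}$ for some KVFCL $X_t$. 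The $X_t$-orbit through $x$ is then a geodesic of length $F(X_t(x)) = tF(v)$ joining $x$ to $\exp_x(tv)$, hence (by local uniqueness of minimizers for close points) a reparametrization of $s \mapsto \exp_x(sv)$, giving $X_t(x) = tv$. The rescaled field $X := t^{-1} X_t$ is then a KVFCL with $X(x) = v$.

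The main obstacle, in my view, is the compactness/topology step in the ``only if'' direction: one must argue that the a priori arbitrary CW-translations $\rho_t$ with small displacement actually sit inside the fixed neighborhood $\mathcal{N}$ of the identity where Theorem \ref{theorem-1} applies. This is where the equality of CW-constant and pointwise displacement, combined with the compact-open topology on the Finsler isometry group, really does the work. Once this is in place, the remaining identifications (orbit equals minimizing geodesic, $X_t(x) = tv$, and the rescaling to extend $v$) are all direct consequences of the geodesic property of KVFCL orbits and the local uniqueness of short minimizing geodesics on a compact Finsler manifold.
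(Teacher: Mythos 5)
Your overall route is the one the paper itself takes: Proposition \ref{proposition-0} is presented there as a direct consequence of Theorem \ref{theorem-1}, with no separate proof, and your two implications flesh that reduction out. The ``if'' direction is sound in substance, though not quite for the reason you give: rescaling the statement of Theorem \ref{theorem-1} does not produce a uniform threshold, since the ``sufficiently small $t$'' there could a priori depend on the field and not only on its length. What you actually need, and what compactness of $M$ supplies, is a $\delta_0>0$ such that every geodesic segment of length at most $\delta_0$ is minimizing; since the integral curves of a KVFCL are geodesics of constant speed equal to its constant length, any KVFCL $X$ with $F(X)=d(x,x')\le\delta_0$ satisfies $d(y,\varphi_1(y))=F(X)$ for every $y$, so $\varphi_1$ is a CW-translation carrying $x$ to $x'$, with $\delta=\delta_0$ uniform.

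The genuine gap is in the ``only if'' direction, at the unjustified equality $F(X_t(x))=tF(v)$. Knowing that the $X_t$-orbit through $x$ is a geodesic joining $x$ to $\exp_x(tv)$ does not force its length to equal $d(x,\exp_x(tv))$: a geodesic between two nearby points can be long, and the field produced by Theorem \ref{theorem-1} could a priori have large constant length $c_t=F(X_t)$, with an orbit that wraps around and only returns near $x$ at time $1$; in that scenario $X_t(x)\neq tv$ and your rescaling $t^{-1}X_t$ does not extend $v$. To close this you must use the clause of Theorem \ref{theorem-1} you never invoke, namely that the generating KVFCL can be chosen with $\varphi_s\in\mathcal{N}$ and $\varphi_s$ a CW-translation for all $s\in[0,1]$, together with the fact (from the construction in \cite{DM1}, not from the bare statement) that $\mathcal{N}$ may be taken inside the set of isometries $\rho$ with $\sup_y d(y,\rho(y))<\delta_0$; such sets form a neighborhood basis of the identity in the compact-open topology of \cite{DH02}. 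Then if $c_t>\delta_0$, at $s^*=\delta_0/c_t<1$ the orbit segment from any $y$ to $\varphi_{s^*}(y)$ has length exactly $\delta_0$, hence is minimizing, so $\varphi_{s^*}$ has displacement $\delta_0$, contradicting $\varphi_{s^*}\in\mathcal{N}$. Hence $c_t\le\delta_0$, the orbit segment from $x$ to $\exp_x(tv)$ is minimizing, so $c_t=tF(v)$, and only at that point does your local-uniqueness argument legitimately give $X_t(x)=tv$ and the desired extension of $v$.
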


\section{Defining  $(\alpha_1,\alpha_2)$-metrics}

\subsection{The local model:  $(\alpha_1,\alpha_2)$-norms}

Before defining  global $(\alpha_1,\alpha_2)$-metrics, let us look at the local model. In the following, by a  Euclidean norm  on the linear space $\mathbb{R}^n$ we mean a metric of the form $\alpha (X)=\sqrt{\langle X,X\rangle}$, where $\langle,\rangle$ is an inner product on $\mathbb{R}^n$.
\begin{definition}
\label{def-alpha-1-2}
A Minkowski norm $F$ on $\mathbb{R}^n$, $n>3$, is called an $(\alpha_1,\alpha_2)$-norm with  dimension decomposition $(n_1,n_2)$, where $n_1\geq n_2>1$, if we can
find a Euclidean norm$\alpha$ on $\mathbb{R}^n$, an $\alpha$-orthogonal decomposition $\mathbb{R}^n=\mathbf{V}_1\oplus \mathbf{V}_2$, where $\dim \mathbf{V}_1=n_1$,  $\dim \mathbf{V}_2=n_2$, such that the value of $F(y)$ is a function  of $\alpha(y_1)$ and $\alpha(y_2)$, where $y=y_1+y_2$ is the decomposition of $y$ with respect to the above decomposition of $\mathbb{R}^n$.
\end{definition}

We exclude the cases that $n_2=0$ and $n_2=1$ since in those cases $F$ is either a Euclidean norm or  an $(\alpha,\beta)$-norm.

The restriction of $\alpha$ to $\mathbf{V}_1$ and $\mathbf{V}_2$ are denoted as $\alpha_1$ and $\alpha_2$, respectively. Moreover, the $\alpha$-orthogonal projections to
$\mathbf{V}_1$ and $\mathbf{V}_2$ are denoted as $\mathrm{pr}_1$ and
$\mathrm{pr}_2$ respectively.
Composed with $\mathrm{pr}_1$ and $\mathrm{pr}_2$,
$\alpha_1$ and $\alpha_2$ can be regarded as the square roots of  positive semi-definite quadratic functions on $\mathbb{R}^n$. Then we have
$\alpha^2=\alpha_1^2+\alpha_2^2$.

An $(\alpha_1,\alpha_2)$-norm can also be represented as $F=f(\alpha_1,\alpha_2)$. By the homogeneity, we have
$$
F=\alpha f(\frac{\alpha_1}{\alpha},\frac{\alpha_2}{\alpha})=\alpha f(\sqrt{1-(\frac{\alpha_2}{\alpha})^2},\frac{\alpha_2}{\alpha}).
$$
Thus
we can also denote it as $F=\alpha\phi(\frac{\alpha_2}{\alpha})$. Similarly,   we can also write
$F=\alpha\psi(\frac{\alpha_1}{\alpha})$. Since only the values of $\phi$ and $\psi$ on $[0,1]$ are relevant to $F$ and they must be positive, we can  assume that they are positive functions on $[0,1]$. It is easily seen that
$\phi(s)=\psi(\sqrt{1-s^2})$.
The following theorem gives more  requirements on the functions $\phi$ and $\psi$.

\begin{theorem} \label{define-requirement}
 Keeping all the notations as in Definition \ref{def-alpha-1-2}, we have
\begin{description}
\item{\rm (1)}\quad    Let $\phi$ and $\psi$ be two positive functions on $[0,1]$ such that
$\phi(s)=\psi(\sqrt{1-s^2})$. Suppose   $F=\alpha\phi(\frac{\alpha_2}{\alpha})=\alpha\psi(\alpha_1/\alpha)$ defines an $(\alpha_1,\alpha_2)$-norm on $\mathbb{R}^n$
with dimension decomposition $(n_1,n_2)$, where $n_1\geq n_2>1$.
Then
both $\phi$ and $\psi$ are positive smooth functions on $[0,1]$, and
\begin{eqnarray}
\phi(s)-s\phi'(s)+(b^2-s^2)\phi''(s) >0, \label{-1} \\
\psi(s)-s\psi'(s)+(b^2-s^2)\psi''(s)>0, \label{-2}
\end{eqnarray}
for any $s$ and $b$ with $0\leq s\leq b\leq 1$.
\item{\rm (2)}\quad Conversely, let $\phi$ and $\psi$ be two positive smooth functions on $[0,1]$
such that $\phi(s)=\psi(\sqrt{1-s^2})$. If
\begin{equation}
\phi(s)-s\phi'(s)+(1-s^2)\phi''(s)>0,
\end{equation}
then $F=\alpha\phi(\frac{\alpha_2}{\alpha})=\alpha\psi(\frac{\alpha_1}{\alpha})$
defines an $(\alpha_1,\alpha_2)$-norm with dimension decomposition
$(n_1,n_2)$, where $n_1\geq n_2>1$.
\end{description}
\end{theorem}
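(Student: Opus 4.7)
The plan is to leverage the $O(\mathbf{V}_1)\times O(\mathbf{V}_2)$-symmetry of $F$ to block-diagonalize the Hessian $g_{ij}=\tfrac{1}{2}(F^2)_{y^iy^j}$ at a sample point $y=ae_1+be_2$, with $e_1\in\mathbf{V}_1,e_2\in\mathbf{V}_2$ unit and $a,b>0$. Set $A=\alpha_1^2$, $B=\alpha_2^2$, $s=\alpha_2/\alpha$, $t=\alpha_1/\alpha=\sqrt{1-s^2}$, and $\Phi(A,B)=F^2=(A+B)\phi(\sqrt{B/(A+B)})^2$. The residual isotropy $O(\mathbf{V}_1\cap e_1^\perp)\times O(\mathbf{V}_2\cap e_2^\perp)$ forces the decomposition
\[
(g_{ij})\;=\;\mathcal{B}_{2\times 2}\oplus(\partial_A\Phi)\cdot I_{n_1-1}\oplus(\partial_B\Phi)\cdot I_{n_2-1},
\]
where $\mathcal{B}_{2\times 2}$ acts on $\mathrm{span}(e_1,e_2)$; a short chain-rule computation yields the clean identities $\partial_A\Phi=\phi(\phi-s\phi')$ and $\partial_B\Phi=\phi(\psi-t\psi')$. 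Moreover, $\mathcal{B}_{2\times 2}$ is exactly the Hessian of the restriction $F|_W$ to $W=\mathrm{span}(e_1,e_2)$, which is an ordinary $(\alpha,\beta)$-norm on $\mathbb{R}^2$ with $b=1$ (using the even extension of $\phi$).

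For part (1), strong convexity of $F$ forces positivity of all three blocks. Since $n_1,n_2\ge 2$, the two scalar blocks give $\phi-s\phi'>0$ and $\psi-t\psi'>0$; positive-definiteness of $\mathcal{B}_{2\times 2}$ is equivalent to strong convexity of $F|_W$, which by the Chern--Shen criterion recalled in Section 2 amounts to $\phi-s\phi'+(1-s^2)\phi''>0$. Inequality (\ref{-1}) for all $b^2\in[s^2,1]$ then follows from the fact that its left-hand side is affine in $b^2$ and positive at the two endpoints $b^2=s^2$ and $b^2=1$; (\ref{-2}) follows from the symmetric $\psi$-version. Smoothness and positivity of $\phi,\psi$ on $[0,1]$ are immediate from the corresponding properties of $F$ on $\mathbb{R}^n\setminus\{0\}$, evaluated on suitable rays through $\mathbf{V}_1$ and $\mathbf{V}_2$.

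For part (2), the main task is to extract the two scalar positivities from the single hypothesis. Smoothness of $\psi$ on $[0,1]$ together with $\phi(s)=\psi(\sqrt{1-s^2})$ forces $\phi'(0)=0$, and symmetrically $\psi'(0)=0$. Put $h(s)=\phi(s)-s\phi'(s)$, so $h'(s)=-s\phi''(s)$. Then $h(0)=\phi(0)>0$ and $h(1)=\phi(1)-\phi'(1)>0$ by the hypothesis at $s=1$ (where $(1-s^2)\phi''$ vanishes). If $h$ ever fell to a value $\le 0$, compactness would produce an interior minimum at some $s_*\in(0,1)$, giving $h'(s_*)=0$ and hence $\phi''(s_*)=0$; but then the hypothesis at $s_*$ reads $h(s_*)>0$, a contradiction. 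Hence $\phi-s\phi'>0$ on $[0,1]$. Applied to $\tilde h(t)=\psi(t)-t\psi'(t)$, and using the readily verified identity $\psi-t\psi'+(1-t^2)\psi''=\phi-s\phi'+(1-s^2)\phi''$ under $t=\sqrt{1-s^2}$, the same argument gives $\psi-t\psi'>0$. All three block-positivities now hold, so the Hessian is positive-definite; smoothness of $F$ on $\mathbb{R}^n\setminus\{0\}$ at the singular loci $\mathbf{V}_i\setminus\{0\}$ is obtained by rewriting $F^2=\alpha^2\Phi_0(\alpha_2^2/\alpha^2)$ near $\mathbf{V}_1\setminus\{0\}$, with $\Phi_0(u)=\phi(\sqrt{u})^2$ smooth in $u$ at $u=0$ thanks to the evenness of $\phi$ encoded in $\phi(s)=\psi(\sqrt{1-s^2})$, and symmetrically near $\mathbf{V}_2\setminus\{0\}$.

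The main obstacle is the bookkeeping establishing the block structure: verifying that $\mathcal{B}_{2\times 2}$ coincides with the Hessian of $F|_W$, and that $\partial_A\Phi,\partial_B\Phi$ simplify to the clean forms above. Once those identifications are in hand, the rest of the argument is a tidy block analysis plus the single extremum observation which makes the converse work from the one given inequality.
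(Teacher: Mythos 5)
Your proposal is correct in substance and arrives at exactly the same three scalar reductions as the paper, namely $\phi-s\phi'>0$, $\psi(t)-t\psi'(t)>0$ and $\phi-s\phi'+(1-s^2)\phi''>0$, tied together by the identity (\ref{f-3-17}) and the observation that the left side of (\ref{-1}) is affine in $b^2$; but you get there by a different vehicle. The paper converts strong convexity into positivity of the principal curvatures of the indicatrix, viewed as a hypersurface in $(\mathbb{R}^n,\alpha)$, parameterizes it by $(\sqrt{1-s^2}\,u\,\phi^{-1}(s),\,s\,v\,\phi^{-1}(s))$, and reads off the three curvature families (\ref{-3})--(\ref{-5}); your block-diagonalization of $(g_{ij})$ at $y=ae_1+be_2$ is the linear-algebraic counterpart of the same symmetry reduction: the scalar blocks $\partial_A\Phi=\phi(\phi-s\phi')$ and $\partial_B\Phi=\psi(\psi-t\psi')$ (the latter equals your $\phi(\psi-t\psi')$ since $\phi(s)=\psi(t)$) are, up to positive factors, the numerators of (\ref{-4}) and (\ref{-5}), and positive definiteness of your $2\times 2$ block is pointwise equivalent to the positivity of the $s$-curve curvature (\ref{-3}), i.e.\ to the two-dimensional Chern--Shen condition. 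In part (2), the paper extracts $\phi-s\phi'>0$ geometrically from strict convexity of the planar indicatrix curve, whereas your interior-minimum argument on $h=\phi-s\phi'$ (using $h'=-s\phi''$ and the hypothesis at a critical point) is an elementary and perfectly valid substitute; your treatment of smoothness of $F$ along $\mathbf{V}_1\cup\mathbf{V}_2$ via $\Phi_0(u)=\phi(\sqrt{u})^2=\psi(\sqrt{1-u})^2$ is in fact more explicit than the paper's appeal to ``a similar argument as in (1)''.

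One step in your part (1) is under-justified: smoothness of $\phi$ and $\psi$ on the closed interval $[0,1]$ is not ``immediate \dots on suitable rays''. Smoothness on $(0,1)$ is immediate, but at the endpoints the curve $s\mapsto\sqrt{1-s^2}\,e_1+s e_2$ fails to be smooth at $s=1$, so composing $F$ with it proves nothing there; one must first show (as the paper does via the even extension and the L'Hospital/Whitney-type argument) that $\psi$ is a smooth function of $t^2$ near $t=0$, and then use $\phi(s)=\psi(\sqrt{1-s^2})$ to obtain smoothness of $\phi$ at $s=1$, and symmetrically at $s=0$. This is precisely the even-function mechanism you invoke in part (2), so the repair is available to you, but it should be spelled out rather than dismissed. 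Relatedly, strictness of (\ref{-1})--(\ref{-2}) at $s=0,1$ requires evaluating your blocks (or their limits) at points of $\mathbf{V}_1\cup\mathbf{V}_2$, the analogue of the paper's continuity remark following (\ref{-5}); with those two points made explicit, your argument is complete.
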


\begin{proof}
(1)\quad  Assume that $F=\alpha\phi(\frac{\alpha_2}{\alpha})=\alpha\psi(\frac{\alpha_1}{\alpha})$
defines an $(\alpha_1,\alpha_2)$-norm on $\mathbb{R}^n$, with  dimension decomposition $(n_1, n_2)$, where
$n_1\geq n_2>1$. Fix an orthonormal basis of $\mathbb{R}^n$ with respect to $\alpha$ such that the first $n_1$ vectors are from $\mathbf{V}_1$ and the others are from $\mathbf{V}_2$. Let $(y^1, y^2,\ldots, y^n)$ be  the
corresponding linear coordinates  and consider the circle $y(t)=(\cos t,0,\ldots,0,\sin t)$. Then the restriction of  $F=\alpha\phi(\frac{\alpha_2}{\alpha})$ to this circle is
\begin{equation}
F(y(t))=\phi(|\sin t|)=\phi(\frac{\alpha_2(y(t))}{\alpha(y(t))}).
\end{equation}
For $t\in (-\pi/2,\pi/2)$, where $t=\arcsin s$ is a smooth function of $s=\sin t$, the
even extension of $\phi$ must be a positive and smooth function on $(-1,1)$. Repeatedly
using L'Hospital rule, one easily sees that $\phi(s)$ is a smooth function of $\tilde{s}=s^2$ for
$\tilde{s}\in [0,1)$. Similarly, $\psi(s)$ is a smooth function of $\tilde{s}=s^2$ for
$\tilde{s}\in [0,1)$. By the relation
$\phi(s)=\psi(\sqrt{1-s^2})$,  $\phi(s)$ is smooth at $s=1$. Similarly,   $\psi(s)$ is   smooth  at $s=1$.
Therefore  $\phi$ and $\psi$ are positive and smooth functions on $[0,1]$.

The strong convexity
 of Finsler metrics is equivalent to the condition that,
when the indicatrix is viewed as a
hypersurface in $\mathbb{R}^n$ with the metric induced from flat metric $\alpha$, if we fix the outside
unit normal field, then the principal curvatures are all positive. To calculate the principle curvatures of the
indicatrix, we parameterize the indicatrix as $(\sqrt{1-s^2}u\phi^{-1}(s),sv\phi^{-1}(s))$,
where $s\in [0,1]$, and $u, v$ are the parameters on the $(n_1-1)$- and $(n_2-1)$-dimensional unit spheres in $\mathcal{V}_1$ and $\mathcal{V}_2$, respectively. For $s\in (0,1)$, it provides good coordinates on the indicatrix.  Now
the principal curvature of the $s$-curve is
\begin{equation}\label{-3}
\frac{\phi(s)-s\phi'(s)+(1-s^2)\phi''(s)}{(\frac{(1-s^2){\phi'}^2(s)}{\phi^2(s)}+1)^{3/2}}.
\end{equation}
In the directions with $u$ changing and $s, v$ fixed, we get $n_1-1$ principle curvatures
\begin{equation}\label{-4}
\frac{\phi(s)-s\phi'(s)}{((1-s^2)\frac{\phi'^2(s)}{\phi^2(s)}+1)^{1/2}}.
\end{equation}
In the directions with $v$ changing and $s, u$ fixed, we get $n_2-1$ principle curvatures
which have a similar expression as (\ref{-4}), namely, we just need to replace $\phi$ with $\psi$, and replace $s$ with $\bar{s}=\sqrt{1-s^2}$:
\begin{equation}\label{-5}
\frac{\psi(\bar{s})-\bar{s}\psi'(\bar{s})}{((1-\bar{s}^2)
\frac{\psi'^2(\bar{s})}{\psi^2(\bar{s})}+1)^{1/2}}.
\end{equation}
By the continuity of the principle curvatures, (\ref{-3})-(\ref{-5}) also give all principal curvatures when $s=0$ or 1.
It is not hard to see that
\begin{equation}\label{f-3-17}
\phi(s)-s\phi'(s)+(1-s^2)\phi''(s)=\psi(\bar{s})-\bar{s}\psi'(\bar{s})+(1-\bar{s}^2)\psi''(\bar{s}),
\end{equation}
so the condition given by (\ref{-1}) and (\ref{-2}) for all $s$ and $b$ with $0\leq s\leq b\leq 1$ is equivalent to the positiveness of
(\ref{-3})-(\ref{-5}) as well as to the positiveness of all principal curvatures.

(2)\quad  If  $\phi$ and $\psi$ are positive smooth functions related by
$\phi(s)=\psi(\sqrt{1-s^2})$, then $\{\frac{1}{\phi(s)}(x,y)| |y|=s\in [0,1], |x|=\sqrt{1-s^2}\}$ defines a closed smooth curve in $\mathbb{R}^2$. Since
$\phi(s)-s\phi'(s)+(1-s^2)\phi''(s)>0$, $\forall s\in [0,1]$, its curvature is nonzero everywhere, or equivalently, it is the boundary of a strictly convex region. Then we  have $\phi(s)-s\phi'(s)>0$.
% otherwise there is a contradiction with the strict convexicity.
Thus $\phi(s)-s\phi'(s)+(b^2-s^2)\phi''(s)>0$, for $0\leq |s|\leq |b|\leq 1$, that is,  (\ref{-1})
is satisfied.
On the other hand, by (\ref{f-3-17}) we also have $\psi({s})-{s}\psi'({s})+(1-{s}^2)\psi''({s})>0$,
$\forall s\in [0,1]$, hence (\ref{-2}) is also satisfied.

Using a similar
argument as in (1) one easily shows that $F=\alpha\phi(\frac{\alpha_2}{\alpha})=\alpha\psi(\frac{\alpha_1}{\alpha})$ is positive and smooth on
$TM\backslash 0$. Now the first two conditions of Minkowski norms are obviously satisfied, and the last condition
is guaranteed by the inequalities (\ref{-1}) and (\ref{-2}). This completes the proof of the theorem.
\end{proof}

Sometimes we need to write an $(\alpha_1,\alpha_2)$-metric as $F=\sqrt{L(\alpha_1^2,\alpha_2^2)}$ for simplicity
of the computation.  Theorem \ref{define-requirement}
also gives the condition for the function $L(u,v)$.

\begin{corollary}
If $F= \sqrt{L(\alpha_1^2,\alpha_2^2)}$ defines an $(\alpha_1,\alpha_2)$-norm, then
$L(u,v)$ is a smooth function on the region $\{ u\geq 0, v\geq 0\}\backslash \{0\}\subset\mathbb{R}^2$.
\end{corollary}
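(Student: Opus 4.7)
The plan is to leverage the smoothness results already extracted for $\phi$ and $\psi$ in the proof of Theorem \ref{define-requirement}(1), namely that both $\phi(s)$ and $\psi(s)$ are smooth functions of $s^2$ on $[0,1)$. The claim $F = \sqrt{L(\alpha_1^2,\alpha_2^2)}$ means exactly $L(u,v) = \alpha^2\phi^2(\alpha_2/\alpha) = \alpha^2\psi^2(\alpha_1/\alpha)$ after the substitution $u=\alpha_1^2$, $v=\alpha_2^2$, so $u+v=\alpha^2$.

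First I would record that, since $\phi(s)$ is smooth in $s^2\in[0,1)$, the map $\phi^2$ can be written as $\phi^2(s) = \tilde\phi(s^2)$ for some smooth function $\tilde\phi$ on $[0,1)$, and likewise $\psi^2(s)=\tilde\psi(s^2)$. Substituting, I would obtain the two expressions
\begin{equation*}
L(u,v) = (u+v)\,\tilde\phi\!\left(\tfrac{v}{u+v}\right) = (u+v)\,\tilde\psi\!\left(\tfrac{u}{u+v}\right),
\end{equation*}
which agree wherever both are defined because $F$ is unambiguously specified.

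Next I would check smoothness pointwise on $\{u\ge 0,\,v\ge 0\}\setminus\{0\}$. Whenever $u>0$, the ratio $v/(u+v)$ lies in $[0,1)$ and $u+v>0$, so the first expression displays $L(u,v)$ as a composition of smooth functions and is therefore smooth in a neighborhood of $(u,v)$. Symmetrically, whenever $v>0$, the second expression gives smoothness near $(u,v)$. Since the domain $\{u\ge 0,\,v\ge 0\}\setminus\{0\}$ is covered by the open half-planes $\{u>0\}$ and $\{v>0\}$, smoothness holds at every point.

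I expect no serious obstacle here; the only subtle point is that neither single expression for $L$ is smooth up to the full boundary of the quadrant, because $\tilde\phi$ and $\tilde\psi$ are only guaranteed smooth on $[0,1)$ and not at $1$. The symmetric pair of formulas resolves this by switching between the two representations near the $u$-axis and the $v$-axis respectively, which is precisely the role the identity $\phi(s)=\psi(\sqrt{1-s^2})$ plays. This is what makes it essential to invoke both $\phi$ and $\psi$ rather than just one.
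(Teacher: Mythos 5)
Your proof is correct and follows essentially the same route as the paper: express $L(u,v)=(u+v)\tilde\phi\bigl(\tfrac{v}{u+v}\bigr)=(u+v)\tilde\psi\bigl(\tfrac{u}{u+v}\bigr)$, invoke the smoothness of $\phi$ and $\psi$ as functions of $s^2$ established in Theorem \ref{define-requirement}, and cover $\{u\geq 0, v\geq 0\}\backslash\{0\}$ by switching between the two representations. The only (immaterial) difference is which representation you use on which half of the quadrant: the paper uses $\phi$ where $v>0$ (relying on smoothness of $\phi$ in $s^2$ up to $s^2=1$ via the relation with $\psi$), while you use $\phi$ where $u>0$, thereby only needing smoothness on $[0,1)$.
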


\begin{proof}
Assume that $F=\alpha\phi(\frac{\alpha_2}{\alpha})=\alpha\psi(\frac{\alpha_1}{\alpha})$. Then by Theorem
\ref{define-requirement},   $\phi(s)$ and $\psi(s)$  are smooth  on $[0,1]$.
Since $\phi(s)=\psi(\sqrt{1-s^2})$, they are also smooth functions of $\tilde{s}=s^2$
on $[0,1]$. Now the function $L$ can be expressed in terms of $\phi$ as
$$
L(u,v)=(u+v)\phi^2(\sqrt{\frac{v}{u+v}}).
$$
When $v>0$ and $u\geq 0$, the smoothness of $L$ follows from the smoothness of $\phi$ as
the function of $\tilde{s}=v/(u+v)$. When $u>0$ and $v\geq 0$, we can use $\psi$
to express $L$ and deduce the smoothness of $L$.
\end{proof}

The linear isometry group of a Minkowski norm $(\mathbb{R}^n,F)$ will be denoted as
$L(\mathbb{R}^n,F)$, and its maximal connected subgroup as $L_0(\mathbb{R}^n,F)$.
It is easily seen that the dimension of the group $L_0(\mathbb{R}^n,F)$ reaches the maximum (which is equal to  $\dim \mathrm{SO}(n)$) if and only if $F$ is a Euclidean norm.  In the following we will show that,  if $F$ is a non-Euclidean $(\alpha_1,\alpha_2)$-norm with  dimension decomposition $(n_1,n_2)$, where $n_1\geq n_2>1$,
then $L_0(\mathbb{R}^n,F)$ is equal to the maximal connected proper subgroup $\mathrm{SO}(n_1)\times \mathrm{SO}(n_2)$ of $\mathrm{SO}(n)$.

Let $\mathrm{SO}(\mathbf{V}_1,\alpha)$ and
$\mathrm{SO}(\mathbf{V}_2,\alpha)$
be the maximal connected subgroups of $\mathrm{SO}(\mathbb{R}^n,\alpha)$ which keep all vectors
in $\mathbf{V}_2$ and $\mathbf{V}_1$ invariant, respectively.
Given an $\alpha$-orthogonal base of $\mathbf{V}_1$ and that of $\mathbf{V}_2$,
$\mathrm{SO}(\mathbf{V}_1,\alpha)$ and $\mathrm{SO}(\mathbf{V}_2,\alpha)$ can be naturally identified with the subgroups $\mathrm{SO}(n_1)$ and $\mathrm{SO}(n_2)$ in $\mathrm{SO}(n)$, respectively. Since $F(y)$ is a function of $\alpha_1(y)$ and $\alpha_2(y)$, it is invariant under
the action $\mathrm{SO}(\mathbf{V}_1,\alpha)\times \mathrm{SO}(\mathbf{V}_2,\alpha)$, that is,
\begin{equation}\label{8}
\mathrm{SO}(\mathbf{V}_1,\alpha)\times \mathrm{SO}(\mathbf{V}_2,\alpha)\subset L_0(\mathbb{R}^n,F).
\end{equation}

Conversely, if  (\ref{8}) holds for a Euclidean norm$\alpha$ on
$\mathbb{R}^n$ and an $\alpha$-orthogonal decomposition $\mathbb{R}^n=\mathbf{V}_1\oplus \mathbf{V}_2$,
then any two vectors $y,y'\in \mathbb{R}^n$ with the same $\alpha_1$-values and the
same $\alpha_2$-values belong to the same orbit of the actions of
$\mathrm{SO}(\mathbf{V}_1,\alpha)\times \mathrm{SO}(\mathbf{V}_2,\alpha)$. Therefore  they have the same $F$-values. Hence the $F$-values
are determined uniquely by the $\alpha_1$- and $\alpha_2$-values. Thus   $F$ is an
$(\alpha_1,\alpha_2)$-norm.

%From (\ref{8}), we claim $I_0(\mathbb{R}^n,F)$ is either $SO(\mathbf{V}_1,\alpha)\times SO(\mathbf{V}_2,\alpha)$ or $SO(n)$. It follows from the fact that
%$SO(\mathbf{V}_1,\alpha)\times SO(\mathbf{V}_2,\alpha)$ is a maximal proper subgroup of $SO(V,\alpha)$, or
%it can be argued as following.

If $\mathrm{SO}(\mathbf{V}_1,\alpha)\times \mathrm{SO}(\mathbf{V}_2,\alpha)$ is a proper subgroup of $L_0(\mathbb{R}^n,F)$, i.e., if $\dim L_0(\mathbb{R}^n,F) >\dim \mathrm{SO}(\mathbf{V}_1,\alpha)\times \mathrm{SO}(\mathbf{V}_2,\alpha)$, then we can find an infinitesimal
generator $X$ of $L_0(\mathbb{R}^n,F)$, and two nonzero vectors $v_1\in \mathbf{V}_1$ and $v_2\in \mathbf{V}_2$, such that $X(v_1)=v_2$ and $X(v_2)=-v_1$. Now $X$ generates an $S^1$-action of
rotations in the $2$-dimensional subspace $W$ generated by $v_1$ and $v_2$. Note that the restriction of  $F$ to $W$
 is invariant under the rotations generated by $X$. Hence the restriction $F|_W$   is Euclidean. Then $F$ must be of the form $\sqrt{a\alpha_1^2+b\alpha_2^2}$, where $a$ and $b$ are constants. Therefore  it is
a Euclidean norm on $\mathbb{R}^n$.

On the other hand, if $L_0(\mathbb{R}^n,F)=\mathrm{SO}(\mathbf{V}_1,\alpha)\times \mathrm{SO}(\mathbf{V}_2,\alpha)$, then the
representation of $L_0(\mathbb{R}^n,F)$ on $\mathbb{R}^n$ naturally splits
$\mathbb{R}^n$ into the sum of two irreducible invariant subspaces,
One being $\mathbf{V}_1$ and the other being $\mathbf{V}_2$. Note that in this case the two subspaces $\mathbf{V}_1$ and $\mathbf{V}_2$ are uniquely
determined by $L_0(\mathbb{R}^n,F)$ when $n_1>n_2$. However, if $n_1=n_2$, then one can exchange the subspaces $\mathbf{V}_1$ and $\mathbf{V}_2$.

To summarize, we have the following lemma.
\begin{lemma}\label{lemma-local-isometry}
Let $F$ be a Minkowski norm on $\mathbb{R}^n$, with $n>3$. Then $F$ is an $(\alpha_1,\alpha_2)$-norm with dimension decomposition $(n_1,n_2)$, where $n_1\geq n_2>1$,
if and only if there is a Euclidean norm $\alpha$ on $\mathbb{R}^n$, and an
$\alpha$-orthogonal decomposition $\mathbb{R}^n=\mathbf{V}_1\oplus \mathbf{V}_2$ with $\dim \mathbf{V}_1=n_1$
and $\dim \mathbf{V}_2=n_2$, such that
$\mathrm{SO}(\mathbf{V}_1,\alpha)\times \mathrm{SO}(\mathbf{V}_2,\alpha)\subset L_0(\mathbb{R}^n,F)$.
In this case, the Minkowski norm $F$ is non-Euclidean if and only if
$L_0(\mathbb{R}^n,F)=\mathrm{SO}(\mathbf{V}_1,\alpha)\times \mathrm{SO}(\mathbf{V}_2,\alpha)$.
When $F$ is non-Euclidean and $n_1>n_2$, the subspaces $\mathbf{V}_1$ and $\mathbf{V}_2$ are uniquely determined by $F$.
When $F$ is non-Euclidean and $n_1=n_2$,  the unordered pair $\{\mathbf{V}_1,\mathbf{V}_2\}$ is
uniquely determined by $F$ and there can be an exchange between $\mathbf{V}_1$ and $\mathbf{V}_2$.
\end{lemma}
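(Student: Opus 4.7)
The plan is to organize the proof around the three assertions of the lemma: the equivalence between being an $(\alpha_1,\alpha_2)$-norm and the group containment condition, the non-Euclidean characterization, and the uniqueness of the decomposition. Most ingredients are already distilled in the paragraphs preceding the lemma; my job is to package them cleanly.

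First, for the equivalence, I would establish the forward direction by unpacking the definition: if $F=f(\alpha_1,\alpha_2)$, then because every element of $\mathrm{SO}(\mathbf{V}_1,\alpha)\times\mathrm{SO}(\mathbf{V}_2,\alpha)$ preserves the projections to $\mathbf{V}_1$ and $\mathbf{V}_2$ and their $\alpha$-lengths, it preserves $F$; this is exactly inclusion (\ref{8}). For the converse, I would observe that the orbits of the subgroup $\mathrm{SO}(\mathbf{V}_1,\alpha)\times\mathrm{SO}(\mathbf{V}_2,\alpha)$ on $\mathbb{R}^n$ are parameterized precisely by the pair $(\alpha_1(y),\alpha_2(y))$ (since $\mathrm{SO}(n_i)$ acts transitively on each sphere in $\mathbf{V}_i$). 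Hence $F$, being constant along orbits, is a function of $\alpha_1(y)$ and $\alpha_2(y)$, which is the definition of an $(\alpha_1,\alpha_2)$-norm.

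Second, for the non-Euclidean characterization, I would argue contrapositively. If the inclusion (\ref{8}) is strict, dimension counting forces an infinitesimal isometry $X$ of $F$ not lying in $\mathfrak{so}(\mathbf{V}_1,\alpha)\oplus\mathfrak{so}(\mathbf{V}_2,\alpha)$; after composing with elements of the subgroup I may assume $X$ sends some nonzero $v_1\in\mathbf{V}_1$ to a nonzero $v_2\in\mathbf{V}_2$ and vice-versa, generating an $S^1$-rotation in the plane $W=\mathrm{span}\{v_1,v_2\}$. Invariance of $F|_W$ under this rotation, combined with the fact that $F|_W$ was already a function of $(\alpha_1,\alpha_2)=(|v_1\text{-component}|,|v_2\text{-component}|)$ only, forces $F|_W$ to be Euclidean of the form $\sqrt{a\alpha_1^2+b\alpha_2^2}$ with constants $a,b$ depending only on $|v_1|,|v_2|$. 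Then, sweeping $v_1,v_2$ over $\mathbf{V}_1\times\mathbf{V}_2$ via the subgroup action (which preserves $F$), the constants $a,b$ are forced to be universal, giving $F=\sqrt{a\alpha_1^2+b\alpha_2^2}$ globally, a Euclidean norm. The reverse implication, that a Euclidean norm has $L_0\cong\mathrm{SO}(n)$ strictly larger than $\mathrm{SO}(n_1)\times\mathrm{SO}(n_2)$, is immediate.

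Third, for uniqueness I would use representation theory: when $L_0(\mathbb{R}^n,F)=\mathrm{SO}(\mathbf{V}_1,\alpha)\times\mathrm{SO}(\mathbf{V}_2,\alpha)$, the restriction of the standard representation of this product to $\mathbb{R}^n$ splits as the direct sum of the defining representations of the two factors, which are irreducible and have respective dimensions $n_1$ and $n_2$. This decomposition is intrinsic to the group, so $\mathbf{V}_1$ and $\mathbf{V}_2$ are canonically recovered as the irreducible pieces; when $n_1>n_2$ they are distinguished by dimension, and when $n_1=n_2$ only the unordered pair $\{\mathbf{V}_1,\mathbf{V}_2\}$ is determined. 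The main obstacle I anticipate is in the non-Euclidean step: ensuring that the Euclidean form induced on each $2$-plane $W$ propagates uniformly so that the coefficients $a,b$ are actually constants, rather than varying with the choice of $v_1,v_2$. This requires invoking transitivity of the subgroup action on pairs of $\alpha$-orthogonal unit vectors with one in each $\mathbf{V}_i$, which I would make explicit to close the argument.
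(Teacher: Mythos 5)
Your proposal is correct and follows essentially the same route as the paper: invariance under $\mathrm{SO}(\mathbf{V}_1,\alpha)\times\mathrm{SO}(\mathbf{V}_2,\alpha)$ versus transitivity on level sets of $(\alpha_1,\alpha_2)$ for the equivalence, an extra infinitesimal generator producing an $S^1$-rotation in a mixed $2$-plane $W$ to force $F=\sqrt{a\alpha_1^2+b\alpha_2^2}$ in the non-Euclidean dichotomy, and the irreducible splitting of the representation of $L_0(\mathbb{R}^n,F)$ for uniqueness. The only (harmless) extra step is your ``sweeping'' argument to make $a,b$ universal: it is not needed, since $F$ is already known to be a function of $(\alpha_1(y),\alpha_2(y))$ and the pair $(\alpha_1,\alpha_2)$ restricted to $W$ already attains all values in $[0,\infty)^2$, so the formula on $W$ determines $F$ globally.
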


%Lemma \ref{lemma-local-isometry} is the key observation which
%inspire us to define and study Minkowski norms and Finsler metrics of
%$(\alpha_1,\alpha_2)$-types. In Finsler geometry, $(\alpha,\beta)$-metrics
%are a important class of metrics for study, They are relatively calculable,
%and they can provide a lot of interesting examples and help us to understand
%the intrinsic nature of connections and curvatures in Finsler geometry.
%The main reason for their calculability is that we can get the help from their maximal local homogeneities
%among all non-Riemannian metrics, i.e. the connected linear isometry group of each
%tangent space is the maximal strict subgroup $SO(n-1)$ of $SO(n)$. From this point
%of view, the Finsler metrics locally modelled by $(\alpha_1,\alpha_2)$-norms also have
%the maximal local homogeneities among the non-Riemannian metrics which can help us to
%calculate, which provides more and new interesting examples.

In general, the representation of an $(\alpha_1,\alpha_2)$-norm is not unique.
But if we require the datum
to be normalized in the following sense, then by
Lemma \ref{lemma-local-isometry}, the representation is  unique in almost all the cases.

\begin{definition}\label{normalized-datum}
Let $F=\alpha\phi(\frac{\alpha_2}{\alpha})$ be an $(\alpha_1,\alpha_2)$-norm
on $\mathbb{R}^n$ with a dimension decomposition $(n_1,n_2)$, where $n_1\geq n_2>1$.
 The datum $(\phi,\alpha,\mathbf{V}_1,\mathbf{V}_2)$  is called normalized if
$\phi(0)=\phi(1)=1$.
\end{definition}

The notion of  normalized
datum can be  defined similarly when we write an $(\alpha_1,\alpha_2)$-norm in the form $F=\alpha\psi(\frac{\alpha_1}{\alpha})$ or $F=f(\alpha_1,\alpha_2)$.

It is easily seen that a datum $(\phi,\alpha,\mathbf{V}_1,\mathbf{V}_2)$ is normalized in the sense of  Definition \ref{normalized-datum} if and only if
$F(y)=\alpha_1(y)$, $\forall y\in \mathbf{V}_1$ and $F(y)=\alpha_2(y)$, $\forall y\in \mathbf{V}_2$.

 If $F$ is non-Euclidean and $n_1=n_2$, then up to a possible exchange, the subspaces $\mathbf{V}_1$ and $\mathbf{V}_2$ are uniquely determined by $F$. The above assertion indicates that $\alpha_1$
and $\alpha_2$ are also uniquely determined by $F$. Thus   $\phi$ is also uniquely determined by $F$.
From this we deduce
the following corollary of Lemma \ref{lemma-local-isometry}.

\begin{corollary}
Let $F$ be a non-Riemannian $(\alpha_1,\alpha_2)$-norm on $\mathbb{R}^n$ with  dimension
decomposition $(n_1,n_2)$, where $n_1\geq n_2>1$.
If $n_1>n_2$, then  $F$ has a unique normalized datum $(\phi,\alpha,\mathbf{V}_1,\mathbf{V}_2)$. However,  if $n_1=n_2$,
then it has exactly two normalized data up to the exchange between $\mathbf{V}_1$ and $\mathbf{V}_2$.
\end{corollary}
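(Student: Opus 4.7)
The plan is to bootstrap from Lemma \ref{lemma-local-isometry}, which has already done the heavy lifting: for a non-Euclidean $(\alpha_1,\alpha_2)$-norm, the connected linear isometry group $L_0(\mathbb{R}^n,F)$ equals $\mathrm{SO}(\mathbf{V}_1,\alpha)\times\mathrm{SO}(\mathbf{V}_2,\alpha)$, and the ordered pair $(\mathbf{V}_1,\mathbf{V}_2)$ (resp.\ the unordered pair, when $n_1=n_2$) is uniquely determined by $F$. Thus the first component of a normalized datum that can still vary is the Euclidean norm $\alpha$, not the splitting.

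Once the splitting is fixed, I would use the normalization conditions $\phi(0)=\phi(1)=1$ to pin down $\alpha_1$ and $\alpha_2$ individually. Indeed, for $y\in\mathbf{V}_1$ we have $\alpha_2(y)=0$, hence $F(y)=\alpha(y)\phi(0)=\alpha_1(y)$; similarly $F|_{\mathbf{V}_2}=\alpha_2$. So normalization forces the Euclidean norms $\alpha_1$ and $\alpha_2$ to be the restrictions of $F$ to $\mathbf{V}_1$ and $\mathbf{V}_2$ respectively. These restrictions are manifestly determined by $F$ alone (and they are automatically Euclidean norms, since they are $\mathrm{SO}(\mathbf{V}_i,\alpha)$-invariant Minkowski norms on a space of dimension $\geq 2$, but we don't actually need to reprove this here — the $(\alpha_1,\alpha_2)$ structure already guarantees it). The full norm $\alpha$ is then recovered by $\alpha^2=\alpha_1^2+\alpha_2^2$, and $\phi$ is recovered on $(0,1)$ by $\phi(\alpha_2(y)/\alpha(y))=F(y)/\alpha(y)$, with values at the endpoints fixed by continuity and normalization. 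So given the splitting, the entire datum $(\phi,\alpha,\mathbf{V}_1,\mathbf{V}_2)$ is determined.

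Combining the two steps, if $n_1>n_2$ the ordered pair $(\mathbf{V}_1,\mathbf{V}_2)$ is unique, so the normalized datum is unique. If $n_1=n_2$, there are exactly two ordered pairs (obtained by swapping the two factors of the unordered pair given by the lemma), hence exactly two normalized data, related by the exchange $\mathbf{V}_1\leftrightarrow\mathbf{V}_2$ (which induces $\alpha_1\leftrightarrow\alpha_2$ and replaces $\phi(s)$ by $\psi(s)=\phi(\sqrt{1-s^2})$, both of which are again normalized).

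I do not anticipate any genuine obstacle; the content of the corollary is essentially a bookkeeping consequence of Lemma \ref{lemma-local-isometry} together with the observation that the normalization eliminates the one-parameter rescaling ambiguity in the choice of $(\alpha_1,\alpha_2)$ that one would otherwise have after fixing the splitting. The only point worth writing carefully is the deduction $F|_{\mathbf{V}_i}=\alpha_i$ from $\phi(0)=\phi(1)=1$, since this is the precise mechanism that ties $\alpha$ to the intrinsically defined object $F$.
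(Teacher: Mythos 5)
Your proposal is correct and takes essentially the same route as the paper: both obtain the (essential) uniqueness of the splitting $\{\mathbf{V}_1,\mathbf{V}_2\}$ from Lemma \ref{lemma-local-isometry} and then use the observation that the normalization $\phi(0)=\phi(1)=1$ is equivalent to $F|_{\mathbf{V}_1}=\alpha_1$, $F|_{\mathbf{V}_2}=\alpha_2$, which recovers $\alpha_1$, $\alpha_2$, hence $\alpha$ and $\phi$, uniquely once the splitting is fixed. No changes are needed.
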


\subsection{Globally defined $(\alpha_1,\alpha_2)$-metrics}

We have two  ways to define global $(\alpha_1,\alpha_2)$-metrics. The first
is the general one.
\begin{definition}\label{global-def-1}
Let $F$ be a Finsler  metric on a manifold $M$.
If the restriction of $F$ to any tangent space  is  an $(\alpha_1,\alpha_2)$-norm with  dimension decomposition $(n_1,n_2)$, where $n_1\geq n_2>1$, then $F$ is called a general $(\alpha_1,\alpha_2)$-metric
with  dimension decomposition $(n_1,n_2)$.
\end{definition}

The second is the special one.
\begin{definition}\label{global-def-2}
Let  $F$ be  a Finsler metric on a manifold $M$.  If there is a Riemannian metric $\alpha$ on $M$ and
an $\alpha$-orthogonal bundle decomposition $TM=\mathcal{V}_1\oplus \mathcal{V}_2$, where $\mathcal{V}_1$ and $\mathcal{V}_2$ are $n_1$- and $n_2$-dimensional linear
subbundles ($n_1\geq n_2>1$) respectively, such that  $F(x,y)$ is  a function of
$\alpha(x,y_1)$ and $\alpha(x,y_2)$, $\forall x\in M$ and $y\in T_x M$, where  $y=y_1+y_2$
is the decomposition of $y$ with respect to the bundle decomposition, then  $F$ is call an $(\alpha_1,\alpha_2)$-metric with
dimension decomposition $(n_1,n_2)$.
\end{definition}

For an $(\alpha_1,\alpha_2)$-metric $F$ in the special sense,  there are positive smooth functions
$\phi$ and $\psi$ on $[0,1]$, and functions $\alpha_1$ and $\alpha_2$ on $TM$ similarly defined as in the last subsection, such that
$F=\alpha\phi(\frac{\alpha_2}{\alpha})=\alpha\psi(\frac{\alpha_1}{\alpha})$. In this sense the notion of $(\alpha_1,\alpha_2)$-metrics  can be viewed as a
generalization of  $(\alpha,\beta)$-metrics.

From the above two definitions, one easily sees that an $(\alpha_1,\alpha_2)$-metric in the sense of Definition \ref{global-def-2} (in the special sense) must be a general $(\alpha_1,\alpha_2)$-metric in the sense of Definition \ref{global-def-1}. Note that for an $(\alpha_1,\alpha_2)$-metric $F$ of the general type, the datum of  $F$ may not be able to be globalized. For example, on the Euclidean space $\mathbb{R}^n$, let $(x^1, x^2,\cdots,x^n)$ be
the standard coordinate system and let $(x^1,\cdots, x^n, y^1,\cdots, y^n)$ be the globally defined standard coordinate system of the tangent bundle $T\mathbb{R}^n$. Define a smooth function
$\varphi$ on $\mathbb{R}^n\times\mathbb{R}$ by
$$\varphi (x, s)=1+\varepsilon_1e^{-|x|^2}s+\varepsilon_2e^{-|x|^2}s^2,\quad x\in \mathbb{R}^n, s\in\mathbb{R},$$
where $|x|=\mathop{\sum}_{i=1}^n(x^i)^2$, $x=(x^1,x^2,\cdots,x^n)$, and $\varepsilon_1, \varepsilon_2$ are positive numbers. Now we define a Finsler metric
$F$ on $\mathbb{R}^n$ by
$$F(x, y)=\sqrt{\sum_{i=1}^n(y^i)^2}\,\,\varphi(x, \frac{\sqrt{(y^1)^2+(y^2)^2}}{\sqrt{\mathop{\sum}\limits_{i=1}^n(y^i)^2}}),\quad x\in \mathbb{R}^n, y\in T_x(\mathbb{R}^n).$$
Then it is easily seen that $F$ is a general  $(\alpha_1,\alpha_2)$-metric. But the datum of $F$ can not be globalized, hence $F$ is not a special $(\alpha_1,\alpha_2)$-metric in the sense of
Definition \ref{global-def-2}.
However, if $F$ is a homogeneous $(\alpha_1,\alpha_2)$-metric  in the sense of either Definition \ref{global-def-1} or \ref{global-def-2}, then in almost all the cases we can find good datum of $F$ which
globally defines the metric.

\begin{theorem}\label{theorem-3}
Let $(M,F)$ be a homogeneous non-Riemannian general $(\alpha_1,\alpha_2)$-space, with  dimension decomposition $(n_1,n_2)$, where $n_1\geq n_2>1$.   Suppose $M=G/H$,
 where $G$ is a closed connected transitive  subgroup of $I_0(M,F)$, and $H$  is the isotropy subgroup of $G$ at a fixed $x\in M$. Assume that $H$ is connected. Then we have the following:
\begin{description}
\item{\rm (1)}\quad There exist a positive smooth function $\phi$ on $[0,1]$, with $\phi(0)=\phi(1)=1$, a smooth Riemannian metric $\alpha$ on $M$, and an $\alpha$-orthogonal bundle decomposition  $TM=\mathcal{V}_1\oplus \mathcal{V}_2$ with the given dimension decomposition and corresponding $\alpha_1$ and $\alpha_2$, such that $F=\alpha\phi(\frac{\alpha_2}{\alpha})$. Moreover,   in each tangent space $TM_x$, the triple $(\phi,\alpha|_{TM_x},{\mathcal{V}_1}_{x},{\mathcal{V}_2}_x)$ is a normalized datum of the Minkowski norm $F(x,\cdot)$.
\item{\rm (2)}\quad Let $\rho$ be an isometry  in $I_0(M,F)$. Then for the global datum in (1), we have $\rho_*\mathcal{V}_1=\mathcal{V}_1$, $\rho_*\mathcal{V}_2=\mathcal{V}_2$, $\rho^* \alpha_1=\alpha_1$, $\rho^* \alpha_2=\alpha_2$ and $\rho^* \alpha=\alpha$. A vector field $X$ is a Killing vector
field for $F$ if and only if $L_X \alpha_1=L_X \alpha_2=0$.
\item{\rm (3)}\quad The global datum of $F$ in (1) induces a normalized datum on $\mathfrak{m}$, which is $\mathrm{Ad}(H)$-invariant. The correspondence between the global datum in (1) and the $\mathrm{Ad}(H)$-invariant normalized datum on $\mathfrak{m}$ is one-to-one.
\end{description}
\end{theorem}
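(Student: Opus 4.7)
The plan is to construct the global datum starting from a fixed base point and then transport it by the group action, using the (near-)uniqueness of normalized local data guaranteed by Lemma \ref{lemma-local-isometry} and its corollary to show smoothness and invariance.

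First I would start at the base point $x\in M$ and invoke the corollary following Lemma \ref{lemma-local-isometry}: since $F(x,\cdot)$ is a non-Euclidean $(\alpha_1,\alpha_2)$-norm, it admits a unique normalized datum $(\phi,\alpha|_{T_xM},\mathbf{V}_1,\mathbf{V}_2)$ when $n_1>n_2$, and exactly two (differing by the swap of $\mathbf{V}_1$ and $\mathbf{V}_2$) when $n_1=n_2$. The isotropy representation of $H$ on $T_xM$ lies inside $L_0(T_xM,F(x,\cdot))$, which by Lemma \ref{lemma-local-isometry} equals $\mathrm{SO}(\mathbf{V}_1,\alpha)\times\mathrm{SO}(\mathbf{V}_2,\alpha)$; connectedness of $H$ rules out any swap between $\mathbf{V}_1$ and $\mathbf{V}_2$ when $n_1=n_2$, so in both cases the chosen normalized datum at $x$ is $\mathrm{Ad}(H)$-invariant. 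I then propagate it to every $x'\in M$ by picking $g\in G$ with $g\cdot x=x'$ and setting the datum at $x'$ to be $g_*$ applied to the datum at $x$. $\mathrm{Ad}(H)$-invariance makes this definition independent of the choice of $g$, and smoothness follows from smoothness of the $G$-action together with the smoothness of the normalized datum at $x$; that the resulting triple satisfies $F=\alpha\phi(\alpha_2/\alpha)$ pointwise and is normalized on every tangent space is then automatic, proving (1).

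For (2), the key observation is that the normalized datum at each point is uniquely determined by the Minkowski norm there (once the swap ambiguity is excluded). If $\rho\in I_0(M,F)$, then $\rho_*$ maps $F(x,\cdot)$ isometrically onto $F(\rho(x),\cdot)$, so it carries a normalized datum to a normalized datum. When $n_1>n_2$ uniqueness immediately forces $\rho_*\mathcal{V}_i=\mathcal{V}_i$, $\rho^*\alpha_i=\alpha_i$, $\rho^*\alpha=\alpha$; when $n_1=n_2$ the two candidate data at $\rho(x)$ are interchanged by a discrete operation, so connectedness of $I_0(M,F)$ and continuity of $\rho\mapsto\rho_*\mathcal{V}_1$ rule out the swap, giving the same conclusion. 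The Killing field characterization then follows: if $X$ is Killing, its flow lies in $I_0(M,F)$ and therefore preserves $\alpha_1$ and $\alpha_2$, giving $L_X\alpha_1=L_X\alpha_2=0$; conversely, if both Lie derivatives vanish, then the flow preserves $\alpha^2=\alpha_1^2+\alpha_2^2$ and $\alpha_2/\alpha$, hence preserves $F=\alpha\phi(\alpha_2/\alpha)$, so $X$ is Killing.

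For (3), I would use the standard bijection between $G$-invariant tensorial objects on $G/H$ and $\mathrm{Ad}(H)$-invariant objects on $\mathfrak{m}=\mathfrak{g}/\mathfrak{h}$: the restriction of $(\phi,\alpha,\mathcal{V}_1,\mathcal{V}_2)$ to $T_xM\cong\mathfrak{m}$ is $\mathrm{Ad}(H)$-invariant by construction (since $G$ preserves the global datum, hence $H$ preserves its value at $x$), and conversely any $\mathrm{Ad}(H)$-invariant normalized datum on $\mathfrak{m}$ extends uniquely to a $G$-invariant global datum on $M$ by the same transport construction used in (1). The main obstacle I anticipate is the case $n_1=n_2$: every step above requires arguing that a discrete $\mathbf{V}_1\leftrightarrow\mathbf{V}_2$ swap cannot intervene, and the proof relies essentially on connectedness of $H$ (for the $\mathrm{Ad}(H)$-invariance at $x$) and of $I_0(M,F)$ (for (2)), so I would be careful to isolate these two connectedness inputs rather than appealing to strict uniqueness of the normalized datum.
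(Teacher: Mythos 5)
Your proposal is correct and follows essentially the same route as the paper: fix a normalized datum at the base point, use Lemma \ref{lemma-local-isometry} (with $L_0=\mathrm{SO}(\mathbf{V}_1,\alpha)\times\mathrm{SO}(\mathbf{V}_2,\alpha)$ acting trivially on normalized data and connectedness of $H$ excluding the swap when $n_1=n_2$) to transport it $G$-equivariantly, then use connectedness of $I_0(M,F)$ and discreteness of the set of normalized data for part (2), and the standard $G$-invariant/$\mathrm{Ad}(H)$-invariant correspondence for part (3). The only cosmetic difference is that you push the datum forward from $x$ while the paper pulls back to $x$, and you justify smoothness via the group action rather than via the smoothness of $F$.
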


\noindent\textbf{Remark}\quad The global datum of $F$ in Theorem \ref{theorem-3} is a good datum of the homogeneous
 metric $F$, and it corresponds to a normalized datum of the $(\alpha_1,\alpha_2)$-norm on each tangent space. We will call it a {\rm good normalized datum}.

\medskip
\begin{proof}
(1)\quad  The Minkowski norm $F(x,\cdot)$ on $T_xM$ must be non-Euclidean, otherwise the homogeneity of $(M,F)$ would imply
that it is a Riemannian metric. Fix a normalized datum $(\phi,\alpha,\mathbf{V}_1,\mathbf{V}_2)$
 for the $(\alpha_1,\alpha_2$-norm $F(x,\cdot)$ on $T_x M=\mathfrak{m}$. By the homogeneity of the space, for any $x'\in M$, there exists $g\in G$
such that $g(x')=x$. Then the datum $(\phi,g^* \alpha, {g^{-1}}^* \mathbf{V}_1,
{g^{-1}}^* \mathbf{V}_2)$ defines a normalized datum for the $(\alpha_1,\alpha_2)$-norm $F(x',\cdot)$ on $T_{x'}M$.
  Let $g'\in G$ be another element satisfying $g'(x')=x$. Then $g^{-1}g'$ belongs to the isotropy group at $x'$, which is conjugate to the connected subgroup $H$. Thus $g^{-1}g'$ induces a linear
isometry on $L_0(TM_{x'},F|{TM_{x'}})$, which acts trivially on the set of normalized data. This means that the normalized data induced by $g$ and $g'$ on $TM_{x'}$ coincide. By the smoothness of $F$, it is easily seen that the set of
the normalized data on  tangent spaces of $M$ defines a  Riemannian metric $\alpha$ on $M$, a smooth $\alpha$-orthogonal bundle decomposition $TM=\mathcal{V}_1\oplus\mathcal{V}_2$,  two smooth functions $\alpha_1$ and $\alpha_2$ on $TM$, and a positive smooth function $\phi$ on $[0,1]$ with $\phi(0)=\phi(1)=1$, such that $F$ can be
globally represented as $F=\alpha\phi(\frac{\alpha_2}{\alpha})$. Hence $F$  is an
$(\alpha_1,\alpha_2)$-metric in the global sense.

(2)\quad Let $(\phi,\alpha,\mathcal{V}_1,\mathcal{V}_2)$ be the global datum of $F$
in (1). Any $\rho\in I_0(M,F)$ can be connected by a continuous path $\rho_t$
in $I_0(M,F)$ such that $\rho_0=\mbox{id}$ and $\rho_1=\rho$. At each $x'\in M$,
the normalized datum induced by $\rho_t^*$ at $x'$, that is, the tiples
$$(\phi,\rho_t^*(\alpha(\rho_t(x),\cdot)),\rho_t^*{\mathcal{V}_1}_{\rho_t(x)},
\rho_t^*{\mathcal{V}_2}_{\rho_t(x)}),$$
defines a continuous path of normalized data at $x'$, which
must be a constant family. Therefore we have  $\rho_*\mathcal{V}_i=\mathcal{V}_i$ and $\rho^*\alpha_i=\alpha_i$, $\forall i=1,2$. Consequently $\rho^*\alpha=\alpha$.

Let $X$ be a  Killing vector field of $F$. Then we have
\begin{equation}\label{9}
L_{X}\alpha_1=L_{X}\alpha_2=0.
\end{equation}
Thus the diffeomorphisms generated by $X$
keep $\alpha_1$,  $\alpha_2$ and $\alpha$ invariant. At the same time these diffeomorphisms induce linear isomorphisms among the null spaces of $\alpha_1$ and $\alpha_2$ in different tangent spaces. Hence they preserves the linear sub-bundles $\mathcal{V}_2$ and $\mathcal{V}_1$. Thus the
diffeomorphisms generated by $X$ are isometries of $F$.

(3)\quad  This follows directly from  the proof of (1).
\end{proof}

Theorem \ref{theorem-3} indicates immediately the existence of a good normalized datum
when $M$ is simply connected. When $M$ is not simply connected, we can use the good datum  of the universal covering manifold to study  local geometric properties.

In another case when $M=G$ is a Lie group and $F$ is a left-invariant non-Riemannian
$(\alpha_1,\alpha_2)$-metric on $G$, the good datum can always be found. In this case the homogeneous space $G$ can be written as a coset space $G'/H$, where
$G'=I_0(G,F)$ and $H$ is the isotropy group of $G'$ at $e\in G$. Since $G'=G/H$ is diffeomorphic
to $G\times H$,  $H$ is connected.

If  in the dimension decomposition  of $F$ we have $n_1>n_2$, then for any $x\in M$,
the normalized datum of the $(\alpha_1,\alpha_2)$-norm $F(x,\cdot)$ on $T_x(M)$   is unique.
In this case, Theorem \ref{theorem-3} holds without the assumption on the connectedness of $H$. The proof only needs some minor changes and will be omitted.

At the end of this section, we give an explicit example of homogeneous
$(\alpha_1,\alpha_2)$-space for which we can find good normalized datum.

Let $G$ be a connected Lie group and $H$ be a compact subgroup of $G$. Suppose the isotropy
representation of $H$ on the tangent space $T_o(G/H)$ at the origin of the coset space $G/H$ can  be decomposed as
\begin{equation}\label{de}
T_o(G/H)=\mathbf{V}_1\oplus \mathbf{V}_2,
\end{equation}
where $\mathbf{V}_1$ and $\mathbf{V}_2$ are irreducible $H$-invariant subspaces of $T_o(G/H)$ with dimensions $\geq 2$. Suppose $\langle,\rangle$ is an $H$-invariant inner product
on $T_o(G/H)$. Then $\langle,\rangle$ can be extended to a $G$-invariant Riemannian metric $\alpha$ on $G/H$ (see \cite{DE12}). On the other hand,
we can define a Minkowski norm on $T_o(G/H)$ by
$$F(X)=\sqrt{\langle X,X\rangle+\sqrt[m]{\langle X_1,X_1\rangle^m+\langle X_2,X_2\rangle^m}},\quad X\in T_o(G/H),$$
where $m\geq 2$ is an integer and $X=X_1+X_2$ is the decomposition of $X$ with respect to (\ref{de}) (see \cite{SZ}). It is easily seen that
$F$ is invariant under the action of $H$. Therefore $F$ can be extended to a $G$-invariant Finsler metric on $G/H$ (see \cite{DE12}), which is obviously a homogeneous $(\alpha_1,\alpha_2)$-metric.

\section{The S-curvatures of  homogeneous $(\alpha_1,\alpha_2)$-spaces}
\subsection{The S-curvature of  homogeneous Finsler spaces}
In \cite{DM6}, we have proven the following formula for the S-curvature of a homogeneous Finsler manifold.

\begin{theorem}\label{S-curvature-formula}
Let $M$ be a homogeneous Finsler space $G/H$, where $H$ is the isotropy group at
$x\in M$. Suppose the Lie algebra $\mathfrak{g}$ of $G$ has a reductive decomposition
 $$\mathfrak{g}=\mathfrak{h}+\mathfrak{m},\quad\mbox{(direct sum of subspaces)}$$
 where $\mathfrak{h}=\mathrm{Lie}\,H$ and $\mathrm{Ad}(h)(\mathfrak{m})\subset \mathfrak{m}$, $\forall h\in H$.
Then for any nonzero $y\in \mathfrak{m}=TM_x$, the S-curvature is given by
\begin{equation}\label{S-F}
S(x,y)=\langle [y,\nabla^{g_{ij}}\ln\sqrt{\det(g_{pq})}(y)]_{\mathfrak{m}},y\rangle_y,
\end{equation}
 where $[\cdot,\cdot]_{\mathfrak{m}}:\mathfrak{m}\otimes\mathfrak{m}\rightarrow\mathfrak{m}$ is the composition of the bracket operation $[\cdot,\cdot]$ with the projection map to $\mathfrak{m}$ with respect to the decomposition $\mathfrak{g}=\mathfrak{h}+\mathfrak{m}$, $\nabla^{g_{ij}}$ is the gradient of the Riemannian metric on $T_xM\backslash 0$ defined by the Hessian matrix $(g_{ij})$, and $\langle\cdot,\cdot\rangle_y$ is the inner product on $T_x(M)$ defined by
the Hessian matrix $(g_{ij}(y))$.
\end{theorem}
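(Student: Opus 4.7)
The plan is to compute $S(x,y)$ directly from its definition as the geodesic-spray derivative of the distortion $\tau$, exploiting homogeneity at two crucial points: to trivialize the volume term in $\tau$, and to identify the time derivative of the ``body velocity'' along the geodesic via an Euler--Arnold-type equation.

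First, I would use homogeneity to write the geodesic $\gamma(t)$ from $x$ with $\dot\gamma(0)=y$ as $\gamma(t)=\eta(t)\cdot x$ for a smooth path $\eta(t)\in G$ (chosen as the horizontal lift with respect to the reductive splitting), and set $Y(t):=(L_{\eta(t)^{-1}})_*\dot\gamma(t)\in\mathfrak{m}$. Fix a basis of $\mathfrak{m}\cong T_xM$ and push it forward along $\gamma$ by $(L_{\eta(t)})_*$ to obtain a local frame along $\gamma$. Because $L_{\eta(t)}$ is an $F$-isometry, the Hessian matrix $(g_{ij}(\gamma(t),\dot\gamma(t)))$ in this frame equals $(g_{ij}(Y(t)))$ for the fixed Minkowski norm $F(x,\cdot)$ on $\mathfrak{m}$, and the Busemann--Hausdorff coefficient $\sigma(\gamma(t))$ equals the constant volume of the unit ball of that fixed norm. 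Consequently
$$S(x,y)=\frac{d}{dt}\bigg|_{t=0}\tau(\gamma(t),\dot\gamma(t))=\frac{d}{dt}\bigg|_{t=0}\ln\sqrt{\det(g_{ij}(Y(t)))}.$$

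Second, I would derive the geodesic equation for $Y(t)$. Since the energy Lagrangian $\tfrac12 F^2$ is $G$-invariant, writing the Euler--Lagrange equations in body coordinates via the reductive splitting $\mathfrak{g}=\mathfrak{h}+\mathfrak{m}$ yields the Euler--Arnold-type equation
$$g_{Y(t)}(\dot Y(t),Z)=g_{Y(t)}(Y(t),[Y(t),Z]_{\mathfrak{m}}),\qquad\forall Z\in\mathfrak{m},$$
which characterizes $\dot Y(0)$ in terms of $y$ and the bracket structure of $\mathfrak{g}$.

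Finally, I would combine the two steps via the chain rule. Setting $f(y):=\ln\sqrt{\det(g_{pq}(y))}$, the display above becomes $S(x,y)=df(y)(\dot Y(0))=\langle\nabla^{g_{ij}}f(y),\dot Y(0)\rangle_y$, and substituting $Z=\nabla^{g_{ij}}f(y)$ into the Euler--Arnold equation together with symmetry of $\langle\cdot,\cdot\rangle_y$ yields the claimed formula. The main obstacle I expect is the clean derivation of the Euler--Arnold equation in the reductive Finsler setting: one has to check that the $\mathfrak{h}$-component of the lift $\eta(t)^{-1}\dot\eta(t)$ contributes nothing (this is exactly what forces only the $\mathfrak{m}$-projected bracket to appear), and carefully track sign conventions between the abstract Lie bracket and the commutator of left-invariant vector fields. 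Once this intrinsic geodesic equation is in hand, the remaining calculation is essentially a one-line identification through the gradient.
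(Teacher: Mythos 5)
Your proposal is correct, but it takes a genuinely different route from the paper. The paper's proof (recalled from \cite{DM6}) is a direct computation in a Killing frame: one writes the geodesic spray as $G=y^i\tilde X_i+\frac12 g^{il}c^k_{lj}[F^2]_{y^k}y^j\partial_{y^i}$ in a frame of Killing vector fields $X_i$, observes that the derivatives of the distortion $\tau$ in the $\tilde X_i$-directions vanish, and evaluates the remaining fiber-direction derivative, which is exactly the bracket expression in (\ref{S-F}). You instead reduce along a single geodesic: homogeneity makes the Busemann--Hausdorff factor constant so that $\tau(\gamma(t),\dot\gamma(t))=\ln\sqrt{\det g_{ij}(Y(t))}+\mathrm{const}$, and Euler--Poincar\'e reduction of the energy (reduced Lagrangian $\frac12F^2(\xi_{\mathfrak m})$, horizontal lift so $\xi=Y\in\mathfrak m$) gives $g_Y(\dot Y,Z)=g_Y(Y,[Y,Z]_{\mathfrak m})$ for all $Z\in\mathfrak m$; substituting $Z=\nabla^{g_{ij}}\ln\sqrt{\det(g_{pq})}(y)$ reproduces (\ref{S-F}) with the correct sign --- with the standard Lie-algebra bracket the reduced equation indeed carries the plus sign you wrote, consistent with the paper, whose Killing-frame constants satisfy $[X_l,X_j]_{\mathfrak m}=-c^k_{lj}X_k$ because fundamental vector fields of a left action are an anti-homomorphism. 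What each approach buys: the paper's argument is shorter, avoids variational and lifting machinery, and yields the spray coefficients in a Killing frame as a useful by-product; yours is coordinate-free and makes transparent exactly where homogeneity enters (constancy of the volume term, intrinsic body-velocity ODE). To make your sketch complete you must still write out the points you yourself flag: existence of the local horizontal lift via the reductive connection, the fact that fiber-direction variations and boundary terms contribute nothing because the reduced Lagrangian is independent of $\xi_{\mathfrak h}$ (so only the $\mathfrak m$-projected bracket survives), the remark that $\tau$ may be evaluated in any linear frame since the ratio $\sqrt{\det(g_{ij})}/\sigma$ is invariant under arbitrary linear changes of basis (not only coordinate-induced ones), and the sign bookkeeping between the abstract bracket and commutators of the induced vector fields.
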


For the completeness of the paper,  we  briefly recall the proof in \cite{DM6}.

Given any $x\in M$, one can find     a Killing frame  around $x\in M$, that is, each $X_i$ is a
Killing vector field on an open neighborhood of $x$ and  $X_i|_x$, $i=1,2\ldots,n$, form a basis of $T_x M$. Then
for any nonzero vector $y=y^i X_i(x)\in T_x M$, the geodesic spray $G(x,y)$ is given by
$$
G(x,y)=y^i \tilde{X}_i+\frac{1}{2}g^{il}c^k_{lj}[F^2]_{y^k}y^j \partial_{y^i},
$$
where $\tilde{X}_i$ is a vector field on $TM$ induced by $X_i$, and the coefficients
$c^k_{lj}$s is defined by $[X_l,X_j](x)=c^k_{lj}X_k(x)$. Note that we also have
$[X_l,X_j]_m=-c^k_{lj}X_k$ when $X_i$,  $i=1,2\ldots,n$, are viewed as vectors of the Lie algebra $\mathfrak{g}$.

Since $X_i$,  $i=1,2\ldots,n$ are Killing vector fields,
the derivatives of the distortion function $\tau(x,y)$ vanish in all $\tilde{X}_i$-directions.
To calculate the S-curvature, we need only compute the derivative of $\tau(x,y)$ in the
direction of $\frac{1}{2}g^{il}c^k_{lj}[F^2]_{y^k}y^j \partial_{y^i}$, which gives
\begin{eqnarray*}
g^{il}g^{kh}c^k_{lj}y^h y^j\partial_{y^i}\sqrt{\det(g_{pq})}
=\langle [y,\nabla^{g_{ij}}\ln\sqrt{\det(g_{pq})}(y)]_{\mathfrak{m}},y\rangle_y.
\end{eqnarray*}
From this the formula (\ref{S-F}) follows.

\subsection{The S-curvature of  homogeneous $(\alpha_1,\alpha_2)$-spaces}
We now apply Theorem \ref{S-curvature-formula}
to deduce an explicit S-curvature formula for a non-Riemannian homogeneous
$(\alpha_1,\alpha_2)$-space. The key here is that for  $x\in M$, the connected linear isometry group
$L_0(F(x,\cdot),T_x M)$  provides plenty of rotational symmetries of the norm. These symmetries  imply
that the tangent vector $\nabla^{g_{ij}}\ln\sqrt{\det(g_{pq})}(y)$ of the indicatrix in $T_x M$ is
perpendicular to the $L_0(F(x,\cdot),T_x M)$-orbit through $(x,y)$. Given a  vector $y\in T_x(M)\backslash (\mathbf{V}_1\cup \mathbf{V}_2)$, write  $y=y'+y''$ with respect to the decomposition
$\mathfrak{m}=\mathbf{V}_1\oplus \mathbf{V}_2$. Then  $\nabla^{g_{ij}}\ln\sqrt{\det(g_{pq})}(y)$ is contained in the 2-dimensional space generated by $y'$ and $y''$. This fact will be useful in our computation.
%As in the Randers case, the S-curvature $S(x,y)$ can be presented as a linear %combination of
%$\langle [y',y''],y'\rangle$ and $\langle [y',y''],y''\rangle$.
Note that a similar calculation can be carried out
 when $n_2=1$. Hence   the S-curvature formula
we will obtain below also applies  to homogeneous $(\alpha,\beta)$-spaces.

We begin with a good normalized datum
of $(M, F)$. In the case that $M$ is simply connected, the existence of a good normalized datum has been proven in previous sections. If $M$ is not simply connected, we compute for the simply connected covering space of $M$
with the induced homogeneous $(\alpha_1,\alpha_2)$-metric. Since the formula will depends only on the algebraic structure and the metric,
it also applies to $M$.

 Assume that $F$ is  a non-Riemannian
homogeneous $(\alpha_1,\alpha_2)$-metric on $M$  defined by an $\mathrm{Ad}(H)$-invariant
$(\alpha_1,\alpha_2)$-norm on $\mathfrak{m}$ (for the convenience we still denote the norm as $F$). Suppose $(\phi,\alpha,\mathbf{V}_1,\mathbf{V}_2)$ is an $\mathrm{Ad}(H)$-invariant normalized datum  of $F$ on $\mathfrak{m}$.
The inner product induced by $\alpha$ is denoted as $\langle\cdot,\cdot\rangle$.

Let $y\in \mathfrak{m}\backslash
(\mathbf{V}_1\cup \mathbf{V}_2)$.  To calculate  $S(x,y)$, we  choose
the linear coordinates $(y^i)$ with respect to
an $\alpha$-orthonormal basis $\{v_1,\ldots,v_n\}$ of $\mathfrak{m}$,  such that the first $n_1$
vectors form a basis of $\mathbf{V}_1$ and the rest form a basis of $\mathbf{V}_2$. Then we have
\begin{eqnarray*}
\alpha   &=& \sqrt{({y^1})^2+\cdots+({y^n})^2},\\
\alpha_1 &=& \sqrt{({y^1})^2+\cdots+({y^{n_1}})^2},
\end{eqnarray*}
 and
$$\alpha_2 = \sqrt{({y^{n_1+1}})^2+\cdots+({y^n})^2}.$$
We first assume that $y=(a,0,\ldots,0,a')$, with $a>0$, $a'>0$, and that $\alpha(y)=\sqrt{a^2+a'^2}=1$. The projections of $y$ into $\mathbf{V}_1$ and $\mathbf{V}_2$ are denoted
as $y'=(a,0,\ldots,0)$ and $y''=(0,\ldots,0,a')$, respectively.

For the simplicity
of the computation, we write $F$ in the form of $F=\sqrt{L(\alpha_1^2,\alpha_2^2)}$, where
$L$ is positively homogeneous of degree 1. We use $L_1(\cdot,\cdot)$, $L_2(\cdot,\cdot)$, $L_{11}(\cdot,\cdot)$, etc, to denote
the derivatives of $L(\cdot,\cdot)$, with respect to the variables indicated by the lower indices.
Similarly, we use $L_1$, $L_2$, $L_{11}$, etc, to denote their values at $(a^2,a'^2)$. In particular,  we simply write $L(a^2,a'^2)$ as $L$.

In the following we will perform some complicated computations. Some of the calculations, although more involved here, are similar to that of the similar quantities for the Randers case; see \cite{DH13}. First we have
\begin{eqnarray*}
{[F^2]}_{y^i} &=& 2 y^i L_1 (\alpha_1^2,\alpha_2^2),\quad \mbox{ if }i\leq n_1,\\
{[F^2]}_{y^i} &=& 2 y^i L_2 (\alpha_1^2,\alpha_2^2),\quad \mbox{ if }i>n_1,\\
{[F^2]}_{y^i y^i} &=& 2 L_1 (\alpha_1^2,\alpha_2^2)+4 {y^i}^2
L_{11}(\alpha_1^2,\alpha_2^2),\quad\mbox{ if } i\leq n_1,\\
{[F^2]}_{y^i y^i} &=& 2 L_2 (\alpha_1^2,\alpha_2^2)+4 {y^i}^2
L_{22}(\alpha_1^2,\alpha_2^2),\quad \mbox{ if } i> n_1,\\
{[F^2]}_{y^i y^j} &=& 4 y^i y^j L_{11}(\alpha_1^2,\alpha_2^2),\quad
\mbox{ if } i<j\leq n_1\\
{[F^2]}_{y^i y^j} &=& 4 y^i y^j L_{22}(\alpha_1^2,\alpha_2^2),\quad
\mbox{ if } i>j>\leq n_1,\\
{[F^2]}_{y^i y^j} &=& 4 y^i y^j L_{12}(\alpha_1^2,\alpha_2^2),\quad
\mbox{ if } i\leq n_1<j.
\end{eqnarray*}
On the other hand, one easily obtains  the Hessian matrix $(g_{ij}(y))$:
\begin{eqnarray*}
g_{11} &=& L_1+2a^2 L_{11},\\
g_{nn} &=& L_2+2a'^2 L_{22},\\
g_{1n} &=& 2a a' L_{12},\\
g_{ii} &=& L_1,\quad \forall i=1,\ldots,n_1,\\
g_{ii} &=& L_2, \quad\forall i=n_1+1,\ldots,n,
\end{eqnarray*}
with all other $g_{ij}(y)=0$. Furthermore, the inverse matrix $(g^{ij})$ of the Hessian at $y$ is
given by
\begin{eqnarray*}
g^{11}&=& \frac{L_2+2a'^2 L_{22}}{L_1 L_2-2LL_{12}},\\
g^{nn}&=& \frac{L_1+2a^2 L_{11}}{L_1 L_2 -2LL_{12}},\\
g^{1n}&=& \frac{-2aa'L_{12}}{L_1 L_2-2LL_{12}},\\
g^{ii}&=& L_1^{-1}, \quad \forall i=2,\ldots,n_1,\\
g^{ii}&=& L_2^{-1}, \quad \forall i=n_1+1,\ldots,n,
\end{eqnarray*}
with all other $g^{ij}=0$ at $y$.

To determine the coefficients of the mean Cartan torsion, we need first compute the
coefficients of the Cartan tensor. A direct computation shows that
\begin{eqnarray*}
C_{111}(y) &=& 3a L_{11}+ 2a^3 L_{111},\\
C_{nn1}(y) &=& a L_{12} + 2a a'^2 L_{221},\\
C_{n11}(y) &=& a' L_{12} +2 a^2 a'L_{112},\\
C_{nnn}(y) &=& 3 a' L_{22} + 2a'^3 L_{222},\\
C_{ii1}(y) &=& a L_{11}, \quad\forall i=2,\ldots,n_1,\\
C_{iin}(y) &=& a' L_{12},\quad \forall i=2,\ldots,n_1,\\
C_{ii1}(y) &=& a L_{12}, \quad\forall i=n_1+1,\ldots,n,\\
C_{iin}(y) &=& a'L_{22}, \quad\forall i=n_1+1,\ldots,n,
\end{eqnarray*}
and that $C_{ijk}(y)=0$ when  $k\notin \{1,n\}$ and $i=j$,  or $k\notin\{1,n\}$ and $\{i,j\}=\{1,n\}$.
From this we conclude that all the coefficients $I_k=\partial_{y^k}[\ln\sqrt{\det(g_{pq})}]$ of the mean Cartan torsion vanish at $y$ except
\begin{eqnarray*}
I_1 &=&\frac{-LL_{12}-2a^2 LL_{112}}{a(L_1 L_2 -2 LL_{12})}
+(n_1-1)\frac{a L_{11}}{L_1}+(n_2-1)\frac{a L_{12}}{L_2},\\
I_n &=& \frac{-LL_{12}-2a'^2 LL_{122}}{a'(L_1 L_2-2LL_{12})}
+(n_1-1)\frac{a' L_{12}}{L_1}+(n_2-1)\frac{a' L_{22}}{L_2}.
\end{eqnarray*}
Now the vector $\nabla^{g_{ij}}\ln\sqrt{\det(g_{pq})}(y)=g^{ik}(y)[\ln\sqrt{\det(g_{pq})}]_{y^k}(y)$
is the sum of a multiple of $y$ and
\begin{equation}\label{10}
[(\frac{-LL_{12}-2a^2LL_{112}}{aa'(L_1 L_2-2LL_{12})}+
(n_1-1)\frac{aL_{11}}{a'L_1}+(n_2-1)\frac{aL_{12}}{a'L_2}))\frac{L}{a a'(L_1 L_2-L L_{12})}]y'.
\end{equation}
Denote the coefficient of $y'$ in (\ref{10}) as $\Phi(y)$. Then the S-curvature at $y$
is given by
\begin{eqnarray*}
S(x,y)=\Phi(y)\langle[y,y']_{\mathfrak{m}},y\rangle_y=\Phi(y)\langle[y'',y']_{\mathfrak{m}},y\rangle_y.
\end{eqnarray*}
Denote
$$\langle [y'',y']_{\mathfrak{m}},y'\rangle = c,$$
and
$$\langle [y'',y']_{\mathfrak{m}},y''\rangle = d.$$
Then we have
\begin{eqnarray*}
\langle [y'',y']_{\mathfrak{m}},y\rangle_y &=& cg_{11}+d g_{nn}
+(\frac{a'c}{a}+\frac{ad}{a'})g_{1n}\\
&=& L_1 c+L_2 d.
\end{eqnarray*}
Consequently, we get the S-curvature formula for the homogeneous $(\alpha_1,\alpha_2)$-space. We summarize the above as the following theorem.

\begin{theorem}\label{main-theorem-2}
Let $M=G/H$ be a connected simply connected reductive homogeneous manifold with a reductive decomposition of the Lie algebra $\mathfrak{g}=\mathfrak{h}+\mathfrak{m}$. Lat $o$ be the origin of $M$. Identify
the tangent space $T_o(M)$ with $\mathfrak{m}$. Let $F=\sqrt{L(\alpha_1^2,\alpha_2^2)}$ be a non-Riemannian $G$-invariant $(\alpha_1,\alpha_2)$-metric on  $M$ with  dimension decomposition
$(n_1,n_2)$, where $n_1\geq n_2>1$. Suppose $(\phi,\alpha,\mathbf{V}_1,\mathbf{V}_2)$ is an $\mathrm{Ad}(H)$-invariant normalized datum  of $F$ on $\mathfrak{m}$.
Then for any $y\in \mathfrak{m}\backslash (\mathbf{V}_1\cup \mathbf{V}_2)$ with $\alpha(y)=1$, the S-curvature $S(x,y)$ is given by
\begin{equation}\label{main-formula}
S(x,y)=\Phi(y)(L_1(a^2,a'^2)\langle [y'',y']_\mathfrak{m},y'\rangle+L_2(a^2,a'^2) \langle [y'',y']_\mathfrak{m},y''\rangle),
\end{equation}
where $y=y'+y''$ is the decomposition of $y$ with respect to the decomposition $\mathfrak{m}=\mathbf{V}_1\oplus \mathbf{V}_2$, $\alpha(y)=a$, $\alpha(y')=a'$, and{\small
\begin{eqnarray*}
\Phi(y)=\left[(\frac{-LL_{12}-2a^2LL_{112}}{aa'(L_1 L_2-2LL_{12})}+
(n_1-1)\frac{aL_{11}}{a'L_1}+(n_2-1)\frac{aL_{12}}{a'L_2}))\frac{L}{a a'(L_1 L_2-L L_{12})}\right]_{(a^2,a'^2)}.
\end{eqnarray*}}
\end{theorem}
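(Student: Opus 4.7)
The plan is to reduce the statement to an application of Theorem \ref{S-curvature-formula} combined with the symmetry analysis furnished by Lemma \ref{lemma-local-isometry}. The crucial structural observation is that, since $L_0(T_x M, F(x,\cdot)) = \mathrm{SO}(\mathbf{V}_1,\alpha)\times\mathrm{SO}(\mathbf{V}_2,\alpha)$, the functions $\det(g_{pq})$ and $\ln\sqrt{\det(g_{pq})}$ on $\mathfrak{m}\setminus\{0\}$ depend only on the two invariants $\alpha_1^2$ and $\alpha_2^2$. Consequently, for $y=y'+y''$ with $y'\in\mathbf{V}_1$, $y''\in\mathbf{V}_2$, the gradient $\nabla^{g_{ij}}\ln\sqrt{\det(g_{pq})}(y)$ is forced to lie in the two-plane $\mathrm{span}\{y',y''\}$. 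Since a multiple of $y$ contributes nothing to $\langle[y,\cdot]_{\mathfrak{m}},y\rangle_y$, only the component of the gradient along $y'$ (equivalently, along $y''$) survives in the S-curvature formula.

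To make this explicit, I would choose an $\alpha$-orthonormal basis $\{v_1,\ldots,v_n\}$ of $\mathfrak{m}$ adapted to the decomposition $\mathfrak{m}=\mathbf{V}_1\oplus\mathbf{V}_2$, and, using the $L_0$-symmetry, assume $y=(a,0,\ldots,0,a')$ with $\alpha(y)=1$. Writing $F^2=L(\alpha_1^2,\alpha_2^2)$, I would then compute the Hessian $(g_{ij}(y))$, observing that it is block-diagonal in the $(n_1-1)+(n_2-1)$ directions transverse to $y'$ and $y''$, with diagonal entries $L_1$ and $L_2$ respectively, while the nontrivial $2\times 2$ block in the $(y^1,y^n)$-plane has entries involving $L_1+2a^2L_{11}$, $L_2+2a'^2L_{22}$, $2aa'L_{12}$. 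Inverting this block, together with the identity $L_1L_2 - 2LL_{12}$ appearing in the denominator (a consequence of the degree-$1$ homogeneity of $L$), yields $(g^{ij}(y))$ in closed form.

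Next I would compute the mean Cartan coefficients $I_k(y)=\partial_{y^k}\ln\sqrt{\det(g_{pq})}$ via $I_k = g^{ij}C_{ijk}$. By the $\mathrm{SO}(\mathbf{V}_1)\times\mathrm{SO}(\mathbf{V}_2)$ symmetry, $I_k(y)$ vanishes except for $k=1$ and $k=n$, so only $I_1$ and $I_n$ need to be evaluated; these are computed by summing the third-order derivatives of $F^2$ against $g^{ij}$, giving the two expressions involving $(n_1-1)$ and $(n_2-1)$ as displayed in the definition of $\Phi(y)$. Then contracting with $g^{ij}$ to form $\nabla^{g_{ij}}\ln\sqrt{\det(g_{pq})}(y)$, the resulting vector decomposes as a multiple of $y$ plus $\Phi(y)\, y'$.

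Finally, I would substitute into (\ref{S-F}): the multiple-of-$y$ piece drops out by antisymmetry of the bracket, and the remaining term gives $S(x,y) = \Phi(y)\langle[y,y']_{\mathfrak{m}},y\rangle_y = \Phi(y)\langle[y'',y']_{\mathfrak{m}},y\rangle_y$. Expanding $\langle\cdot,\cdot\rangle_y$ on $[y'',y']_{\mathfrak{m}}$ against $y=y'+y''$ using the block structure of $(g_{ij}(y))$ collapses the cross-terms involving $g_{1n}$ and yields exactly $L_1\langle[y'',y']_{\mathfrak{m}},y'\rangle + L_2\langle[y'',y']_{\mathfrak{m}},y''\rangle$, which is (\ref{main-formula}). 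By continuous extension along orbits of the $L_0$-action, this covers all $y\in\mathfrak{m}\setminus(\mathbf{V}_1\cup\mathbf{V}_2)$ with $\alpha(y)=1$. The main obstacle is purely computational, namely the careful bookkeeping of the $2\times 2$ block inversion and the correct aggregation of the Cartan-tensor contractions; the conceptual content reduces to the symmetry restriction of the gradient to the two-plane $\mathrm{span}\{y',y''\}$.
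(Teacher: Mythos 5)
Your proposal is correct and follows essentially the same route as the paper: invoke the homogeneous S-curvature formula of Theorem \ref{S-curvature-formula}, use the $\mathrm{SO}(\mathbf{V}_1,\alpha)\times\mathrm{SO}(\mathbf{V}_2,\alpha)$-symmetry to confine $\nabla^{g_{ij}}\ln\sqrt{\det(g_{pq})}(y)$ to $\mathrm{span}\{y',y''\}$, normalize $y=(a,0,\ldots,0,a')$ in an adapted orthonormal basis, and carry out the Hessian/Cartan-tensor computation (with the Euler homogeneity relations for $L$ producing both the determinant $L_1L_2-2LL_{12}$ and the final collapse to $L_1\langle[y'',y']_\mathfrak{m},y'\rangle+L_2\langle[y'',y']_\mathfrak{m},y''\rangle$). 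No gaps beyond the explicit bookkeeping you already flag as the remaining work.
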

As we mentioned before, the above formula is still valid even if $M$ is not simply connected. Note that in this case, we can use a normalized datum of the universal covering manifold of $M$.

\subsection{Homogeneous $(\alpha_1,\alpha_2)$-metrics with vanishing S-curvature}
We now use the S-curvature formula in Theorem \ref{main-theorem-2} to deduce a necessary and sufficient condition for a homogeneous  $(\alpha_1,\alpha_2)$-metric to have vanishing S-curvature.
\begin{theorem}\label{vanishing-S-curvature-thm}
Keep all the notations as in Theorem \ref{main-theorem-2}.  The S-curvature is everywhere vanishing
 if and only if
\begin{equation}\label{11}
\langle [y',y'']_\mathfrak{m},y'\rangle=\langle [y',y'']_\mathfrak{m},y''\rangle=0,\quad \forall y'\in \mathbf{V}_1, y''\in \mathbf{V}_2.
\end{equation}
\end{theorem}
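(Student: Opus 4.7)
The sufficiency direction is immediate from the S-curvature formula in Theorem~\ref{main-theorem-2}. If (\ref{11}) holds, then for any $y \in \mathfrak{m}\setminus(\mathbf{V}_1 \cup \mathbf{V}_2)$ with $\alpha$-orthogonal decomposition $y = y' + y''$, the antisymmetry of the bracket gives $\langle [y'', y']_\mathfrak{m}, y'\rangle = \langle [y'', y']_\mathfrak{m}, y''\rangle = 0$, so formula (\ref{main-formula}) reads $S(x, y) = 0$ on this open dense subset of $\mathfrak{m}\setminus\{0\}$. Continuity of $S$ on the slit tangent bundle, together with the homogeneity of $(M, F)$, then propagates the vanishing to all of $TM \setminus 0$.

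For the necessity direction, I would fix arbitrary $\alpha$-unit vectors $u_1 \in \mathbf{V}_1$ and $u_2 \in \mathbf{V}_2$ and consider the two-parameter family $y = a u_1 + a' u_2$ with $a, a' > 0$. Setting $C = \langle [u_2, u_1]_\mathfrak{m}, u_1\rangle$ and $D = \langle [u_2, u_1]_\mathfrak{m}, u_2\rangle$, formula (\ref{main-formula}) reduces to
\begin{equation*}
S(x, y) = \Phi(y)\, a\, a'\, \bigl(a\, L_1(a^2, a'^2)\, C + a'\, L_2(a^2, a'^2)\, D\bigr).
\end{equation*}
The assumption $S \equiv 0$ makes this product vanish for all $a, a' > 0$. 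Once we know that $\Phi$ is not identically zero as a function of $(a, a')$, the bracket factor $a L_1 C + a' L_2 D$ must vanish on the open set where $\Phi \neq 0$, and hence by continuity on the closure of that set. The normalization $\phi(0) = \phi(1) = 1$ translates to $L(u, 0) = u$ and $L(0, v) = v$, so $L_1(1, 0) = L_2(0, 1) = 1$; letting $(a, a') \to (1, 0)$ and $(a, a') \to (0, 1)$ then forces $C = 0$ and $D = 0$. Since $u_1, u_2$ were arbitrary, (\ref{11}) follows by homogeneity of the bracket and the inner product in $y', y''$.

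The main obstacle is establishing $\Phi(y) \not\equiv 0$, which is precisely where the non-Riemannian hypothesis inherited from Theorem~\ref{main-theorem-2} enters. I would handle it by first applying Euler's identity to the degree-one homogeneous function $L(u, v)$, which gives $u L_{11} + v L_{12} = 0$ and $u L_{12} + v L_{22} = 0$; these relations show that $L_{12} \equiv 0$ would force $L$ to be linear and hence $F$ to be Riemannian, so under our running hypothesis $L_{12}$ is nontrivial. A direct inspection of the explicit expression for $\Phi$ in Theorem~\ref{main-theorem-2}, examining its behaviour as $a' \to 0$ (where $L_2 \to 1$ but $L_{12}$ does not generically vanish), then rules out pointwise cancellation on any open set. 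Once $\Phi \not\equiv 0$ is secured, the remainder of the necessity argument reduces to the continuity-and-limit computation described above.
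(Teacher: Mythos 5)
Your sufficiency argument and the reduction of (\ref{main-formula}) to $S(x,y)=\Phi(y)\,aa'\,(aL_1C+a'L_2D)$ are both fine, but the necessity direction has a genuine gap. From $S\equiv 0$ and $\Phi\not\equiv 0$ you only obtain $aL_1C+a'L_2D=0$ on the set where $\Phi\neq 0$ and on its closure; your limits $(a,a')\to(1,0)$ and $(a,a')\to(0,1)$ are legitimate only if that closure actually contains the two endpoints. You try to secure this by claiming that non-Riemannianity ($L_{12}\not\equiv 0$) ``rules out pointwise cancellation on any open set'', but no argument is given, and the claim is false in general: $\Phi(y)$ vanishes exactly where the mean Cartan tensor vanishes at $y$, and a smooth non-Riemannian $(\alpha_1,\alpha_2)$-norm can coincide with a Euclidean norm on an open cone of directions (e.g.\ take $\phi\equiv 1$ near $s=0$ and non-quadratic elsewhere, keeping (\ref{-1})--(\ref{-2})), so $\Phi$ may vanish identically near an endpoint. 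Since $\phi$ is only smooth, not analytic, you cannot propagate the identity $aL_1C+a'L_2D=0$ from one open subarc to the endpoints; and knowing the identity only on some interior open set does not force $C=D=0$ (it is one homogeneous linear condition on $(C,D)$, compatible with $C,D$ of opposite signs).

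The paper closes exactly this hole by a sign normalization that you omit. Assuming (\ref{11}) fails, it replaces $y'$ by $-y'$ and/or $y''$ by $-y''$ (which flips the signs of $C$ and $D$ independently) to arrange $C>0$, $D\geq 0$; since $L_1>0$ and $L_2>0$ by strong convexity, the factor $aL_1C+a'L_2D$ is then strictly positive for all $a,a'>0$, so $S\equiv 0$ forces $\Phi\equiv 0$ on the whole range of directions. That means the mean Cartan tensor vanishes identically, hence $F$ is Riemannian by Deicke's theorem, contradicting the non-Riemannian hypothesis carried over from Theorem \ref{main-theorem-2}. In other words, the correct logic runs by contradiction through Deicke's theorem, needing only that $\Phi$ is nonzero somewhere, rather than by your limit computation, which needs the much stronger (and unavailable) statement that $\{\Phi\neq 0\}$ is dense up to the endpoints. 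Incorporating the sign normalization, your argument collapses to the paper's proof; without it, the necessity direction is not established.
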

\begin{proof} That the condition is sufficient follows directly from the formula and  the homogeneity of $F$. We now prove that the condition is necessary.

Suppose the non-Riemannian $(\alpha_1,\alpha_2)$-metric $F$ has vanishing S-curvature but (\ref{11}) does not hold.
Without losing generality, we assume that there are $y'\in \mathbf{V}_1$ and $y''\in \mathbf{V}_2$ such
that $\langle [y',y''],y'\rangle\neq 0$. Denote
$$
a=\langle [y'',y']_\mathfrak{m},y'\rangle \mbox{ and } b=\langle[y'',y']_\mathfrak{m},y''\rangle.
$$
multiplying  $y'$ and $y''$ by suitable scalars if necessary,   we can assume that  $\alpha(y')=\alpha(y'')=1$,
$a>0$ and $b\geq 0$.
Restricted to $y=\sqrt{1-t^2} y'+ty''$,  with $t\in (0,1)$, the coefficients $\Phi$, $L_1$ and $L_2$ in
(\ref{main-formula}) are smooth functions of $t$, which will be denoted as
$\Phi(t)$, $L_1(t)$ and $L_2(t)$ respectively. The same assertion holds  for the S-curvature, which
can be written as
$$
S(t)=t\sqrt{1-t^2}\Phi(t)(a\sqrt{1-t^2}L_1(t)+btL_2(t))\equiv 0.
$$
If we write the metric $F$ as $F=\alpha\phi(\frac{\alpha_2}{\alpha})=\alpha\psi(\frac{\alpha_1}{\alpha})$, then it follows from (\ref{-1}) and (\ref{-2}) that
\begin{eqnarray*}
L_1(t)&=&\phi(t)(\phi(t)-t\phi'(t))>0,
\end{eqnarray*}
and that
\begin{eqnarray*}
L_2(t)&=&\psi(s)(\psi(s)-s\psi'(s))>0\, \mbox{ for } s=\sqrt{1-t^2}.
\end{eqnarray*}
 Therefore we have $\Phi(t)\equiv 0$, $\forall t\in (0,1)$. In the computation of the S-curvature formula, We have seen that $\Phi\equiv 0$ if and only if  the mean Cartan tensor of $M$ is identically $0$. Then the metric $F$ must be Riemannian, which is a contradiction.
\end{proof}

In the special case that $F$ is a left invariant non-Riemannian
$(\alpha_1,\alpha_2)$-metric on a Lie group $G$, the space $\mathfrak{m}$
can be identified with the Lie algebra $\mathfrak{g}$ and
$[\cdot,\cdot]_\mathfrak{m}$ is the Lie bracket of $\mathfrak{g}$. Hence the condition
for $S\equiv 0$ can be stated as the following corollary.

\begin{corollary}\label{corollary-1}
Let $F$ be a left invariant non-Riemannian $(\alpha_1,\alpha_2)$-metric on a Lie group
$G$. Then $S\equiv 0$ if and only if
$$
\langle [y',y''],y'\rangle=\langle [y',y''],y''\rangle=0,\quad \forall y'\in \mathbf{V}_1, y''\in \mathbf{V}_2.
$$
\end{corollary}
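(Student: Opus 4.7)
The plan is to deduce this corollary by a direct specialization of Theorem \ref{vanishing-S-curvature-thm} to the case $M=G$, where $G$ acts on itself by left translation. First I would observe that any left invariant Finsler metric $F$ on $G$ makes $(G,F)$ a homogeneous Finsler manifold, since the left translation group $L(G)\subset I_0(G,F)$ acts transitively on $G$. Taking $G$ itself as the transitive subgroup, the isotropy group at $e\in G$ is trivial, so the reductive decomposition degenerates to $\mathfrak{g}=0+\mathfrak{g}$, giving $\mathfrak{m}=\mathfrak{g}$, and the projection $[\cdot,\cdot]_\mathfrak{m}$ coincides with the Lie bracket of $\mathfrak{g}$ itself.

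Next, I would invoke the existence result for good normalized data, which was already established earlier in Section 3 for exactly this situation: whenever $M=G$ is a Lie group and $F$ is a left invariant non-Riemannian $(\alpha_1,\alpha_2)$-metric on $G$, a good normalized datum $(\phi,\alpha,\mathcal{V}_1,\mathcal{V}_2)$ exists. Restricted to $T_eG=\mathfrak{g}$, this provides an $\mathrm{Ad}(H)$-invariant (here $H=\{e\}$, so the condition is vacuous) normalized datum $(\phi,\alpha,\mathbf{V}_1,\mathbf{V}_2)$ on $\mathfrak{g}$, and the inner product $\langle\cdot,\cdot\rangle$ induced by $\alpha$ is precisely the one appearing in the statement of the corollary.

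With these identifications made, the hypotheses of Theorem \ref{vanishing-S-curvature-thm} are satisfied (using the extension noted after Theorem \ref{main-theorem-2} to cover the case in which $G$ need not be simply connected, since the S-curvature formula depends only on the algebraic data at the identity). Its conclusion states that $S\equiv 0$ on $G$ if and only if
\[
\langle [y',y'']_\mathfrak{m},y'\rangle = \langle [y',y'']_\mathfrak{m},y''\rangle = 0, \quad \forall y'\in \mathbf{V}_1,\ y''\in \mathbf{V}_2.
\]
Since $[\cdot,\cdot]_\mathfrak{m}=[\cdot,\cdot]$ in our setting, this is exactly the stated condition of the corollary.

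There is essentially no technical obstacle here; the only point requiring care is the observation that Theorem \ref{vanishing-S-curvature-thm}, although stated in the simply connected setting, applies to the Lie group case without that assumption, as justified by the remark following Theorem \ref{main-theorem-2}. Modulo that remark, the corollary is an immediate translation of the theorem into Lie algebraic language.
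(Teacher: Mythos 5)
Your proposal is correct and matches the paper's own (very brief) justification: the paper derives the corollary precisely by specializing Theorem \ref{vanishing-S-curvature-thm} to $M=G$ with trivial isotropy, so that $\mathfrak{m}=\mathfrak{g}$ and $[\cdot,\cdot]_\mathfrak{m}$ is the Lie bracket. Your added care about the good normalized datum in the Lie group case and the removal of the simply connectedness assumption is consistent with the remarks the paper itself makes in Sections 3 and 4.
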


\section{Restrictive CW-homogeneity of left invariant  $(\alpha_1,\alpha_2)$-
metrics: Case 1}

In this section we will use  Corollary \ref{corollary-1} to prove Theorem \ref{main-thm-0}.
\subsection{Curvatures of restrictively CW-homogeneous spaces}

In
this subsection we prove an interesting result on S-curvature and flag curvature of restrictive CW-homogeneous Finsler spaces. This result shows that the S-curvature plays an important role in the study
of restrictive CW-homogeneity in Finsler geometry.

\begin{theorem}\label{theorem-4}
A restrictive CW-homogeneous Finsler space has   vanishing S-curvature and non-negative flag curvature.
\end{theorem}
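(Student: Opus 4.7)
The plan is to exploit Proposition \ref{proposition-0} and the standard fact that a KVFCL is geodesic, then handle the two curvature claims separately: the S-curvature claim directly via the isometry-invariance of the distortion, and the flag curvature claim via an osculating Riemannian metric combined with the classical Riemannian result that a Killing vector field of constant length forces non-negative sectional curvature on planes containing it.

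Fix any $x\in M$ and any nonzero $y\in T_xM$. By Proposition \ref{proposition-0}, there is a KVFCL $X$ on $M$ with $X(x)=y$. Since $X$ is Killing of constant $F$-length, its integral curves are $F$-geodesics (the standard argument: the squared length functional is stationary along variations through the flow lines because the flow is by isometries, so the integral curves satisfy the geodesic equation). Denote the flow of $X$ by $\varphi_t$. Then the lift $t\mapsto(\varphi_t(x),X(\varphi_t(x)))$ to $TM\setminus 0$ is precisely an integral curve of the geodesic spray $\mathbf{G}$.

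For the vanishing of $S$, observe that $\tau$ is an isometric invariant of $(M,F)$ because $\sigma$ and $\sqrt{\det(g_{ij})}$ are both preserved under diffeomorphisms that preserve $F$. Hence
\[
\tau\bigl(\varphi_t(x),X(\varphi_t(x))\bigr)=\tau\bigl(\varphi_t(x),(\varphi_t)_*y\bigr)=\tau(x,y),
\]
so $\tau$ is constant along this integral curve of $\mathbf{G}$. Its derivative along $\mathbf{G}$ at $(x,y)$ is $S(x,y)$, which therefore vanishes. As $(x,y)$ was arbitrary, $S\equiv 0$.

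For the flag curvature claim, let $P\subset T_xM$ be a tangent plane containing $y$. Since $X(x)=y\ne 0$, there is an open neighborhood $U$ of $x$ on which $X$ is a non-vanishing geodesic field. Let $\hat g$ be the osculating Riemannian metric on $U$ defined by $\hat g_{ij}(q)=g_{ij}(q,X(q))$. Because the flow $\varphi_t$ preserves $F$, it also preserves the fundamental tensor along $X$:
\[
\varphi_t^*\hat g_{ij}(q)=g_{ij}\bigl(\varphi_t(q),(\varphi_t)_*X(q)\bigr)=g_{ij}\bigl(\varphi_t(q),X(\varphi_t(q))\bigr)=\hat g_{ij}(\varphi_t(q)),
\]
so $X$ is Killing for $\hat g$, and $|X|^2_{\hat g}=g_{ij}(\cdot,X)X^iX^j=F^2(\cdot,X)$ is constant, hence $X$ is a KVFCL for $\hat g$ as well. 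By the classical Berestovskii--Nikonorov type result in Riemannian geometry (which is immediate from the second variation formula applied to the flow of $X$: the flow moves nearby geodesics through curves of constant length, forcing the index form of a Jacobi-type variation based on $X$ to be non-positive in the direction of curvature), the sectional curvature $\hat K(P)$ of any $\hat g$-plane $P$ containing $X(x)=y$ satisfies $\hat K(P)\ge 0$. By Proposition \ref{proposition-1}, $K(P,y)=\hat K(P)\ge 0$, completing the proof.

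The routine steps are the isometric invariance of $\tau$ and the bookkeeping with the osculating metric; the only genuine input is the Riemannian fact that a KVFCL forces non-negative sectional curvature on planes through it, which is where I anticipate needing to cite a reference (e.g., the Berestovskii--Nikonorov program or Wolf's classical work).
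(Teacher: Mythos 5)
Your proposal is correct and follows essentially the same route as the paper: invariance of the distortion under the flow of the KVFCL (which is a geodesic field) kills the S-curvature, and the osculating metric $\hat g=(g_{ij}(\cdot,X(\cdot)))$ together with the Berestovskii--Nikonorov fact that a Killing field of constant length gives $\langle \hat R_y(u),u\rangle_y=\langle\hat\nabla_uX,\hat\nabla_uX\rangle_y\ge 0$, combined with Proposition \ref{proposition-1}, yields the flag curvature bound. The only cosmetic difference is that the paper notes $\hat g$ is globally defined (the KVFCL is nowhere zero) and writes out the curvature identity explicitly, while you work locally and cite the Riemannian result; both are fine.
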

\begin{proof}
First consider the S-curvature. By Proposition \ref{proposition-0},  for any nonzero tangent vector $y\in T_x M$, there is
a KVFCL $Y$ such that $Y(x)=y$. Denote the one-parameter group of isometries generated by $Y$ as $\rho_t$, $t>0$, and the induced diffeomorphisms on $TM$ as $\tilde{\rho}_t$. Then  the function $\tau(x,y)$ in (\ref{-6}) is a constant
along any flow curve of $\tilde{\rho}_t$ in $TM$. Moreover,  the
curves of $\tilde{\rho}_t$ are tangent to the geodesic spray everywhere. Therefore the S-curvature $S(x,y)$, which is the derivative of $\tau(x,y)$ in the direction of the geodesic spray, i.e.,
the direction of the flow curves of $\tilde{\rho}_t$,  must be $0$.

Now we turn to flag curvature. Note that the KVFCL $Y$ is a geodesic field and the Riemannian metric $\hat{g}=(g_{ij}(Y(\cdot)))$
is globally defined. Thus $Y$ is a KVFCL of $\hat{g}$ with the same length as a KVFCL of  $F$.
Let $P$ be a tangent plane in $T_x M$ containing $y$, spanned by $y$ and $u$. Denote by $\hat{K}(P) $
the sectional curvature of $P$ with respect to $\hat{g}$. Then we have
$$
\hat{K}(P)=\frac{\langle \hat{R}_y(u),u\rangle_y}
{\langle y,y\rangle_y\langle u,u\rangle_y-\langle y,u\rangle_y^2}=\frac{\langle \hat{\nabla}_u Y,\hat{\nabla}_u Y\rangle_y}
{\langle y,y\rangle_y\langle u,u\rangle_y-\langle y,u\rangle_y^2}\geq 0,
$$
where $\hat{\nabla}$ is the Levi-Civita connection of $\hat{g}$ (see \cite{BN09}, page 474, Proposition 1). Thus by  Proposition \ref{proposition-1} we have $K(P,y)=\hat{K}(P)\geq 0$.
\end{proof}

\subsection{Proof of Theorem \ref{main-thm-0}}

In this section we will prove  Theorem \ref{main-thm-0} by inducing a contradiction. Suppose  there is a non-Riemannian
left invariant restrictively CW-homogeneous $(\alpha_1,\alpha_2)$-metric $F$ on
a compact connected simple Lie group $G$. Then there exists a good normalized datum of $F$,
which defines a normalized datum $(\phi,\alpha,\mathbf{V}_1,\mathbf{V}_2)$ for the
Minkowski norm on $\mathfrak{m}=\mathfrak{g}$. Since $L(G)\subset I_0(G,F)\subset I_0(G,\alpha)$,
 we have, by \cite{OT76},
$I_0(G,F)\subset L(G)R(G)$. Let $G'$ be the closed connected subgroup of $G$ such that
$R(G')$ is the maximal connected subgroup of right isometric
translations. Then $I_0(G,F)=L(G)R(G')$ and the space of all Killing vector fields
of $(G,F)$ can be identified with  $\mbox{Lie}(I_0(G,F))=\mathfrak{g}
\oplus\mathfrak{g}'$.

The homogeneous space $G$ can be written  as $I_0(G,F)/H$,
where
$$
H=\{L_g R_{g^{-1}}| g\in G'\}
$$
 is isometric to $G'$.
According to Theorem \ref{theorem-3},
there exists a good normalized datum of $F$  which corresponds to the $\mathrm{Ad}(G')$-invariant normalized
datum $(\phi,\alpha,\mathbf{V}_1,\mathbf{V}_2)$ on $\mathfrak{m}=\mathfrak{g}$. On the other hand, $G'$ is   the maximal
connected subgroup of $G$  keeping  $\alpha$, $\mathbf{V}_1$ and $\mathbf{V}_2$ invariant.
Denote the inner product  defined by $\alpha$ on $\mathfrak{g}$  as $\langle\cdot,\cdot\rangle=\langle\cdot,\cdot\rangle_1+\langle\cdot,\cdot\rangle_2$,  where $\langle\cdot,\cdot\rangle_1$ and $\langle\cdot,\cdot\rangle_2$ are the bi-linear
functions defined by $\alpha_1^2$ and $\alpha_2^2$  on $\mathfrak{g}$,  respectively.
Denote the inner product of the bi-invariant metric  as $\langle\cdot,\cdot\rangle_{\mathrm{bi}}$.

From Corollary \ref{corollary-1} and Theorem \ref{theorem-4}, we can get some very clear information about $\alpha$, under the assumption that $\mathbf{V}_2$ is a Cartan subalgebra.

\begin{lemma}\label{lemma-2} Keep all the notations as above and assume that $\mathbf{V}_2$ is a Cartan subalgebra. Then we have the following.
\begin{description}
\item{\rm (1)}\quad The decompositions $\mathfrak{g}=\mathbf{V}_1+\mathbf{V}_2$ and $\mathbf{V}_1=\mathop{\sum}\limits_{\lambda\in\Delta^+}\mathfrak{g}_{\lambda}$,
where $\mathfrak{g}_{\lambda}=\mathfrak{g}\cap(\mathfrak{g}^\mathbb{C}_\lambda+
\mathfrak{g}^\mathbb{C}_{-\lambda})$, and $\Delta^+$ is the set of all positive roots,
are orthogonal with respect to $\alpha$. Restricted to each $\mathfrak{g}_{\lambda}$, $\alpha$ only differs from the bi-invariant metric by a scalar multiplication.
\item{\rm (2)}\quad The space of Killing vector fields of $F$ can be identified with $\mathfrak{g}\oplus \mathbf{V}_2$,
where the first factor corresponds to the left translations and the second factor to
the isometries of right translations.
\end{description}
\end{lemma}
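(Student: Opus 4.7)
The strategy is to combine the two consequences of restrictive CW-homogeneity already at hand: Theorem~\ref{theorem-4} gives $S\equiv 0$, and Corollary~\ref{corollary-1} converts this to the bracket identity
$$\langle [y',y''],y'\rangle = \langle [y',y''],y''\rangle = 0,\qquad \forall y'\in \mathbf{V}_1,\ y''\in \mathbf{V}_2.$$
With $\mathbf{V}_2=\mathfrak{t}$ a Cartan subalgebra, each real root space $\mathfrak{g}_\lambda\subset\mathbf{V}_1$ is $\mathrm{ad}(\mathfrak{t})$-stable of dimension $2$, and one may fix a basis $\{E_\lambda,F_\lambda\}$, orthonormal for the bi-invariant metric, satisfying $[H,E_\lambda]=\lambda(H)F_\lambda$ and $[H,F_\lambda]=-\lambda(H)E_\lambda$. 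The upshot is that the above identity is equivalent to the statement that $\mathrm{ad}(H)|_{\mathbf{V}_1}$ is skew-symmetric with respect to $\langle\cdot,\cdot\rangle_\alpha$ for every $H\in\mathfrak{t}$, which reduces (1) to a linear-algebra problem in a block-diagonal basis.

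For (1), I would first treat a single root: substituting $y'=aE_\lambda+bF_\lambda$ and any $H$ with $\lambda(H)\neq 0$ into $\langle[y',H],y'\rangle_\alpha=0$ and comparing coefficients of $a^2$, $b^2$, $ab$ yields $\langle E_\lambda,F_\lambda\rangle_\alpha=0$ and $\langle E_\lambda,E_\lambda\rangle_\alpha=\langle F_\lambda,F_\lambda\rangle_\alpha$, which is exactly the desired proportionality between $\alpha|_{\mathfrak{g}_\lambda}$ and the bi-invariant metric. Next, for two distinct positive roots $\lambda\neq\mu$, I would use that the root system of a compact simple Lie algebra is reduced, so no two positive roots are proportional; hence there exists $H\in\mathfrak{t}$ with $\lambda(H)\neq 0$ and $\mu(H)=0$. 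Setting $y'=X_\lambda+X_\mu$, the diagonal pieces of $\langle[y',H],y'\rangle_\alpha=0$ vanish by the single-root step, and the remaining cross terms form a nondegenerate linear system in the four entries of the Gram matrix of $\alpha$ between $\mathfrak{g}_\lambda$ and $\mathfrak{g}_\mu$, forcing all of them to vanish. Together with the a priori orthogonality $\mathbf{V}_1\perp_\alpha\mathbf{V}_2$ coming from the good normalized datum, this establishes (1).

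For (2), I would exploit the identification $I_0(G,F)=L(G)R(G')$ from the paragraph preceding the lemma, where $G'$ is the maximal connected subgroup of $G$ whose adjoint action preserves the good normalized datum $(\phi,\alpha,\mathbf{V}_1,\mathbf{V}_2)$. On the one hand, for $t\in T=\exp\mathfrak{t}$ the operator $\mathrm{Ad}(t)$ fixes $\mathfrak{t}$ pointwise and acts on each $\mathfrak{g}_\lambda$ as a rotation; by part (1) these rotations preserve $\alpha|_{\mathfrak{g}_\lambda}$, so $T\subset G'$. On the other hand, since $G'$ stabilizes $\mathbf{V}_2=\mathfrak{t}$, we have $G'\subset N_G(\mathfrak{t})=N_G(T)$; as the Weyl group $N_G(T)/T$ is finite we get $N_G(T)^0=T$, and the connectedness of $G'$ yields $G'\subset T$. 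Thus $G'=T$, so $\mathrm{Lie}(I_0(G,F))=\mathfrak{g}\oplus\mathfrak{t}=\mathfrak{g}\oplus\mathbf{V}_2$, proving (2).

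The main obstacle will be the bookkeeping in the two-root step of (1): one must verify that the $\alpha$-skewness of $\mathrm{ad}(H)$ on $\mathfrak{g}_\lambda\oplus\mathfrak{g}_\mu$ really propagates into each Gram-matrix entry. The cleanest way is to phrase the condition as $G_\alpha \mathrm{ad}(H)+\mathrm{ad}(H)^{T}G_\alpha=0$ on the two-root block, which in the chosen basis reads $\lambda(H)JQ=\mu(H)QJ$ with $J=\bigl(\begin{smallmatrix}0&-1\\1&0\end{smallmatrix}\bigr)$; specializing $H$ to kill $\mu(H)$ while keeping $\lambda(H)\neq 0$ immediately gives $Q=0$.
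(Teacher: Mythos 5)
There is a genuine gap at the very start of your argument for (1): you take for granted that each real root space satisfies $\mathfrak{g}_\lambda\subset\mathbf{V}_1$, i.e.\ that $\mathbf{V}_1=\sum_{\lambda\in\Delta^+}\mathfrak{g}_\lambda$ and, in particular, that $\mathbf{V}_1$ is $\mathrm{ad}(\mathfrak{t})$-stable. But $\mathbf{V}_1$ is only given as the $\alpha$-orthogonal complement of $\mathbf{V}_2=\mathfrak{t}$ for the inner product $\alpha$ of the good normalized datum, which is a priori not bi-invariant; so a priori $\mathbf{V}_1$ could be any complement of $\mathfrak{t}$, and the equality $\mathbf{V}_1=\mathbf{V}_2^{\perp_{\mathrm{bi}}}=\sum_{\lambda\in\Delta^+}\mathfrak{g}_\lambda$ is itself part of the conclusion of the lemma. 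Consequently your reformulation of the $S\equiv 0$ condition as ``$\mathrm{ad}(H)|_{\mathbf{V}_1}$ is $\alpha$-skew-symmetric'' is not yet meaningful ($\mathrm{ad}(H)$ need not preserve $\mathbf{V}_1$), and the block computations on $\mathfrak{g}_\lambda\oplus\mathfrak{g}_\mu$ cannot get started. Tellingly, your plan never uses the second identity $\langle [y',y''],y''\rangle=0$, and that is exactly the identity that closes the gap: for a regular $y''\in\mathbf{V}_2$ one has $[\mathfrak{g},y'']=\mathbf{V}_2^{\perp_{\mathrm{bi}}}$, and since $\mathbf{V}_2$ is commutative $[\mathfrak{g},y'']=[\mathbf{V}_1,y'']$, so $\langle \mathbf{V}_2^{\perp_{\mathrm{bi}}},y''\rangle=\langle [\mathbf{V}_1,y''],y''\rangle=0$; letting $y''$ run over the regular elements gives $\mathbf{V}_2^{\perp_{\mathrm{bi}}}\perp_\alpha\mathbf{V}_2$, and a dimension count yields $\mathbf{V}_2^{\perp_{\mathrm{bi}}}=\mathbf{V}_1$. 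Only after this step is $\mathbf{V}_1$ an $\mathrm{ad}(\mathfrak{t})$-module decomposing into the $\mathfrak{g}_\lambda$, at which point your single-root and two-root calculations (the paper instead invokes $\mathrm{Ad}(K)$-invariance of $\alpha$ and Schur's lemma for the pairwise inequivalent two-dimensional modules $\mathfrak{g}_\lambda$) do finish (1) correctly.

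Your treatment of (2) is essentially the paper's argument and is fine modulo the corrected (1): $\mathrm{Ad}(G')\mathbf{V}_2=\mathbf{V}_2$ forces $\mathfrak{g}'$ into the normalizer of the Cartan subalgebra, hence $\mathfrak{g}'\subset\mathbf{V}_2$ (your route through $N_G(T)^0=T$ says the same thing), while part (1) shows that $\mathrm{Ad}(T)$ preserves $(\phi,\alpha,\mathbf{V}_1,\mathbf{V}_2)$ and hence $F$, so $\mathbf{V}_2\subset\mathfrak{g}'$ and $\mathfrak{g}'=\mathbf{V}_2$. So the only repair needed is to insert the $\mathbf{V}_1=\mathbf{V}_2^{\perp_{\mathrm{bi}}}$ step before any mention of root spaces inside $\mathbf{V}_1$.
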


\begin{proof}(1)\quad
We shall actually prove  the assertion under a weaker condition  that $S\equiv 0$. By Corollary \ref{corollary-1},
$\langle [y',y''],y'\rangle=\langle [y',y''],y''\rangle=0$, for any $y'\in \mathbf{V}_1$ and $y''\in \mathbf{V}_2$. Let $\mathbf{V}_2^{\perp_{\mathrm{bi}}}$ be the orthogonal complement of $\mathbf{V}_2$ with respect
to the bi-invariant metric. Then for any regular $y''\in \mathbf{V}_2$, we have
$$
\langle [\mathfrak{g},y''],y''\rangle=\langle [\mathbf{V}_1,y''],y''\rangle=\langle \mathbf{V}_2^{\perp_{bi}},y''\rangle=0.
$$
Thus $\mathbf{V}_2^{\perp_{\mathrm{bi}}}=\mathbf{V}_1$. Note that $\mathbf{V}_1$ is a representation space of the $\mathrm{ad}$-action of the Lie algebra $\mathbf{V}_2$,  and it can
be decomposed as $\mathbf{V}_1=\sum_{\lambda\in\Delta^+}\mathfrak{g}_{\lambda}$, where each subspace is an irreducible $\mathrm{ad}(\mathbf{V}_2)$-invariant subspace. Denote by $K$ the connected Lie subgroup generated by $\mathbf{V}_2$.
 Since $\langle [y',y''],y'\rangle=0$, the $\mathrm{Ad} (K)$-action of $K$ on $\mathbf{V}_1$ is orthogonal with respect to both $\alpha$ and the bi-invariant metric. So the above decomposition of $\mathbf{V}_1$
is orthogonal with respect to both $\alpha$ and the bi-invariant metric. By Schur's Lemma, for any $\beta\in \Delta^+$, there exists a positive scalar $c_{\beta}$,  such that on the subspace $\mathfrak{g}_{ \beta}$, we have $\langle \cdot ,\cdot \rangle=c_{ \beta}\langle\cdot ,\cdot\rangle_{\mathrm{bi}}$.

(2)\quad Since $\mathrm{Ad}(G')\mathbf{V}_2=\mathbf{V}_2$, we have $\mathfrak{g}'\subset \mathbf{V}_2$.
On the other hand,  we have proved in (1) that $\alpha$, $\mathbf{V}_1$ and $\mathbf{V}_2$ are $\mathrm{Ad}$-invariant under the action of the subgroup $K$. Thus  $\mathbf{V}_2\subset\mathfrak{g}'$. Consequently we have $\mathbf{V}_2 = \mathfrak{g}'$.
\end{proof}

Now we consider the restrictive CW-homogeneity. Suppose $F$ is a left invariant restrictively CW-homogeneous $(\alpha_1,\alpha_2)$-metric on $G$. Keep all the notation as above. For any nonzero $X\in \mathbf{V}_1\in TG_e=\mathfrak{g}$, there is a KVFCL
which value at $e$ is $X$. This Killing vector field must be of the form $(X+X',X')$
with  $X'\in \mathbf{V}_2$. Notice that the  value of the Killing vector field at $e\in G$ is the difference of the two components.
Since  $(X+X',X')$ is of constant length,
for any $Y\in \mathbf{V}_1$ and $t\in \mathbb{R}$, we have
\begin{eqnarray*}
L(\alpha_1^2(\mathrm{Ad}(\exp(tY))(X+X')-X'),
\alpha_2^2(\mathrm{Ad}(\exp(tY))(X+X')-X'))
=\mbox{const}>0.
\end{eqnarray*}
Differentiating the above equation   with respect to $t$ and considering the value at $t=0$, we obtain
\begin{equation}
L_1\langle [Y,X+X'],X\rangle_1+L_2\langle [Y,X+X'],X\rangle_2=0,
\end{equation}
 where $L_1$ and $L_2$ are the partial derivatives of $L$ at $(\alpha_1^2(X),0)$ (which  are positive functions). Since $\langle [Y,X+X'],X\rangle_2=0$,
we have
\begin{lemma}\label{lemma-1}
For any $X\in \mathbf{V}_1$, there is a $X'\in \mathbf{V}_2$, such that
\begin{equation}
\langle [Y,X],X\rangle_1=\langle[Y,X'],X\rangle_1, \quad\forall Y\in \mathbf{V}_1.
\end{equation}
\end{lemma}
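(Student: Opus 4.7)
The plan is to extract the identity of Lemma \ref{lemma-1} directly from the constant-length condition on the Killing field extending $X$. By Proposition \ref{proposition-0}, restrictive CW-homogeneity guarantees that every tangent vector extends to a KVFCL, so we obtain a Killing vector field $\tilde{X}$ of constant $F$-length with $\tilde{X}(e) = X$. By Lemma \ref{lemma-2}(2), the space of Killing vector fields of $(G,F)$ is $\mathfrak{g} \oplus \mathbf{V}_2$; identifying $\tilde{X}$ with a pair $(A,B) \in \mathfrak{g} \oplus \mathbf{V}_2$ and evaluating at $e$ (where the corresponding isometry $\exp(sA)\,\cdot\,\exp(-sB)$ has velocity $A - B$) forces $A - B = X$, hence $A = X + X'$ and $B = X'$ for some $X' \in \mathbf{V}_2$. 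This $X'$ will be the element promised by the lemma.

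Next I would translate the KVFCL property into a Lie-algebraic condition on $X'$. Fix $Y \in \mathbf{V}_1$, compute the value of $\tilde{X}$ at the point $\exp(tY)$, and pull it back to $e$ by left translation; left-invariance of $F$ then reduces the constant-length condition to $F(\xi(t)) = F(X)$ for all $t$, where
\[
\xi(t) := \mathrm{Ad}(\exp(tY))(X+X') - X',
\]
so that $\xi(0) = X$ and $\xi'(0) = [Y, X+X']$. Squaring and writing $F^2 = L(\alpha_1^2,\alpha_2^2)$, we differentiate at $t = 0$ via the chain rule to obtain
\[
L_1 \langle [Y, X+X'], X\rangle_1 + L_2 \langle [Y, X+X'], X\rangle_2 = 0,
\]
where $L_1, L_2$ are evaluated at $(\alpha_1^2(X), 0)$ and are both positive, as follows from the strong-convexity inequalities (\ref{-1}) and (\ref{-2}) of Theorem \ref{define-requirement}.

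The second term vanishes automatically because $X \in \mathbf{V}_1$ is annihilated by the bilinear form $\langle\cdot,\cdot\rangle_2$; consequently $\langle [Y, X+X'], X\rangle_1 = 0$, which upon splitting the bracket gives $\langle [Y, X], X\rangle_1 = -\langle [Y, X'], X\rangle_1$ for every $Y \in \mathbf{V}_1$. This is the claimed identity (up to replacing $X'$ by $-X'$, which is harmless since $\mathbf{V}_2$ is a subspace). The only step needing care is the identification of $\xi(t)$ and the verification that the $L_2$-term really drops out; the rest is a routine chain-rule calculation and does not use any structure beyond Lemma \ref{lemma-2} and the positivity of $L_1$.
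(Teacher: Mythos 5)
Your proposal is correct and follows essentially the same route as the paper: extend $X$ to a KVFCL of the form $(X+X',X')$ using the identification of the Killing fields with $\mathfrak{g}\oplus\mathbf{V}_2$, differentiate the constant-length condition $L(\alpha_1^2(\mathrm{Ad}(\exp(tY))(X+X')-X'),\alpha_2^2(\cdot))=\mathrm{const}$ at $t=0$, and use $\langle [Y,X+X'],X\rangle_2=0$ together with positivity of $L_1$ to drop to the $\langle\cdot,\cdot\rangle_1$ identity. Your extra care about the sign of $X'$ and the evaluation point $(\alpha_1^2(X),0)$ is harmless and consistent with the paper's argument.
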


 For any positive root $\lambda$, select two nonzero vector $Y$ and $Y'$ in $\mathfrak{g}_{\pm\lambda}$, such that $Y$ and $Y'$ are $\alpha$-orthogonal. Furthermore,
for any $X\in \mathbf{V}_1$, let $X'$ be a vector  as in Lemma \ref{lemma-1}.
Then we have
\begin{equation}\label{12}
\langle [Y,X],X\rangle_1 =\langle [Y,X'],X\rangle_1=C(Y, Y')\lambda(X'))\langle Y',X\rangle_1,
\end{equation}
where $C$ is a nonzero function of $Y$ and $Y'$.
The function $f(X)=\langle [Y,X],X\rangle_1$ vanishes on the $\alpha$-orthogonal complement
$Y'^\perp$ of the line generated by $Y'$ in $\mathbf{V}_1$. We now show that $f(X)$
is constantly $0$ on $\mathbf{V}_1$. In fact, given $X=X_1+X_2$, where $X_1$ is a multiple of $Y'$ and $X_2\in Y'^\perp$, we have
\begin{eqnarray*}
\langle [Y,X],X\rangle_1 = \langle [Y,X_1],X\rangle_1+
\langle [Y,X_2],X_2\rangle_1+\langle [Y,X_2],X_1\rangle_1.
\end{eqnarray*}
In the right side of the above equality, the first term is $0$, since  $[Y,Y']\in \mathbf{V}_2$.
On the other hand,  we have just proven that the second term is equal to $0$. Moreover,  the third term is also equal to $0$, since
$$
[Y,X_2]\in [\mathfrak{g}_{\lambda},\sum_{\lambda'\in\Delta^+,\lambda'\neq\lambda}\mathfrak{g}_{\lambda'}]
\subset \sum_{\lambda'\in\Delta^+,\lambda'\neq\lambda}\mathfrak{g}_{\lambda'}\subset Y'^\perp.
$$
Therefore $f$ is equal to $0$ on $\mathbf{V}_1$.

By (\ref{12}),
if $X\notin Y'^\perp$, then $\lambda(X')$ must be $0$. Since this assertions is valid for
any positive roots $\lambda$,  we have $\lambda(X')=0$, $\forall \lambda\in \Delta^+$, that is,  $X'=0$, for all $X$ in  the complement of finite hyperplanes
in $\mathbf{V}_1$, which is an open and dense subset. By the continuity,
all Killing vectors in $\mathfrak{g}\oplus 0$ are KVFCLs. Thus all right
translations of $G$ are isometries. This is a contradiction to (2) of Lemma \ref{lemma-2}. The contradiction comes from the assumption that there exists a left invariant restrictively CW-homogeneous non-Riemannian $(\alpha_1,\alpha_2)$-metric with dimension decomposition $(n_1, n_2)$. This completes the proof of Theorem \ref{main-thm-0}.
%%%%%%%%%%%%%%%%%%%%%%%%%%%%%%%%%%%%%%%%%%%%%%%%%%%%%%%%%%%%%%%%%%%%%%%%%%%%
%                                                                          %
%    The merging starts here                                               %
%                                                                          %
%%%%%%%%%%%%%%%%%%%%%%%%%%%%%%%%%%%%%%%%%%%%%%%%%%%%%%%%%%%%%%%%%%%%%%%%%%%%
\section{Restrictive CW-homogeneity of
left invariant $(\alpha_1,\alpha_2)$-
metrics: Case 2}
\subsection{A key lemma}

The following lemma is crucial for later discussions. Since the proof is rather long, we put it separately in the next section.

\begin{lemma}\label{key lemma}\textbf{\rm (\bf The Key Lemma)}\quad
Let $G$ be a  compact connected simple Lie group with rank $>1$, Lie\,$G=\mathfrak{g}$, and $X$ a nonzero
vector in $\mathfrak{g}$. Then for any nonzero subspace $\mathbf{V}\subset\mathfrak{g}$ with $\dim \mathbf{V}\leq 3$, there
exists  $g\in G$, such that
\begin{equation}\label{aftermerge-0000}
\mathbf{V}\cap \mathrm{Ad}(g) \mathfrak{c}_{\mathfrak{g}}(X)=0.
\end{equation}
\end{lemma}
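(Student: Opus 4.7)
The plan is a dimension-counting argument. Using the identity $\mathrm{Ad}(g)\mathfrak{c}_{\mathfrak{g}}(X)=\mathfrak{c}_{\mathfrak{g}}(\mathrm{Ad}(g)X)$, the lemma is equivalent to finding $Y\in\mathcal{O}_X:=\mathrm{Ad}(G)X$ such that no nonzero $v\in\mathbf{V}$ commutes with $Y$, i.e.\ such that $\mathrm{ad}(Y):\mathbf{V}\to\mathfrak{g}$ is injective. The goal thus becomes to show that the \emph{bad locus}
\[
B=\bigcup_{0\neq v\in\mathbf{V}}\bigl(\mathcal{O}_X\cap\mathfrak{c}_{\mathfrak{g}}(v)\bigr)
\]
is a proper subset of $\mathcal{O}_X$. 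The crucial input is $\dim\mathcal{O}_X\geq 4$: fixing a Cartan $\mathfrak{t}\ni X$, one has $\dim\mathcal{O}_X=2\,|\{\alpha\in\Delta^{+}:\alpha(X)\neq 0\}|$, and in a rank-$\geq 2$ compact simple Lie algebra at least two positive roots are nonzero on $X\neq 0$ (otherwise the vanishing roots would fail to span a hyperplane complement, contradicting irreducibility of the root system).

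For each $v\in\mathbf{V}\setminus\{0\}$ set $m=\dim\mathfrak{c}_{\mathfrak{g}}(v)$. Then $\mathcal{O}_X\cap\mathfrak{c}_{\mathfrak{g}}(v)$ is a finite union of $C_G(v)$-orbits, and each such orbit has dimension at most $m-r$ (with $r=\mathrm{rank}\,G$), since every point stabilizer contains a common maximal torus. Stratifying by $\mathbf{V}\cap S_m$ with $S_m=\{v\in\mathfrak{g}:\dim\mathfrak{c}_{\mathfrak{g}}(v)=m\}$, I obtain
\[
\dim B\leq\max_m\Bigl(\dim(\mathbf{V}\cap S_m)+(m-r)\Bigr).
\]
The regular stratum $m=r$ contributes at most $\dim\mathbf{V}\leq 3<4\leq\dim\mathcal{O}_X$, so it never covers $\mathcal{O}_X$. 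For singular strata $m\geq r+2$, I would combine this with the fact that the non-regular set $S_{\geq r+2}\subset\mathfrak{g}$ has real codimension $\geq 3$ (the uniform ``avoided-crossing'' codimension in any compact simple Lie algebra) to control $\dim(\mathbf{V}\cap S_m)$ and carry the count through.

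The main obstacle lies in a short list of exceptional configurations where the rough bounds become tight: most notably $\mathfrak{g}\cong\mathfrak{su}(3)$ with $\dim\mathcal{O}_X=4$, which is the only compact simple rank-$\geq 2$ Lie algebra admitting an adjoint orbit of dimension $4$. In such cases I would sharpen the bound $m-r$ to $m-\dim\mathfrak{c}_{\mathfrak{g}}(\{v,X'\})$, where $X'\in\mathcal{O}_X\cap\mathfrak{c}_{\mathfrak{g}}(v)$ is chosen to share a common Cartan with $v$: because $v$ is singular, the joint centralizer picks up extra root spaces beyond $\mathfrak{t}$, restoring the slack. The remaining potential counterexamples — such as a hypothetical three-dimensional linear subspace of $\mathfrak{su}(3)$ all of whose nonzero elements are non-regular — would be ruled out directly: the non-regular variety in $\mathfrak{su}(3)$ is the vanishing locus of the discriminant of the characteristic polynomial, and two linearly independent non-regular Hermitian traceless matrices always span a plane whose generic member has three distinct eigenvalues, so no such $\mathbf{V}$ can exist.
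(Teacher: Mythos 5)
Your reduction (find $Y\in\mathcal{O}_X$ with $\mathrm{ad}(Y)|_{\mathbf V}$ injective) and the fact $\dim\mathcal{O}_X\geq 4$ are fine, and the route — a global dimension count of the bad locus — is genuinely different from the paper, which argues locally (an implicit-function-theorem analysis at a point where $\dim(\mathbf V\cap\mathrm{Ad}(g)\mathfrak c_{\mathfrak g}(X))=1$, forcing $\dim[U,[X,\mathfrak g]]\leq 2$) and then rules this out by showing, case by case through the classification, that for commuting $U,X$ at least four roots are non-orthogonal to both. But as written your count has a genuine gap on the singular strata, and it is not confined to $\mathfrak{su}(3)$. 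First, the statement that the codimension-$3$ property of the non-regular locus in $\mathfrak g$ ``controls'' $\dim(\mathbf V\cap S_m)$ is not an argument: a $3$-dimensional $\mathbf V$ can lie entirely inside the singular set — e.g.\ the subalgebra $\mathfrak{su}(2)\oplus 0\subset\mathfrak{su}(n)$, $n\geq 4$, consists of matrices with a repeated zero eigenvalue — so on singular strata $\dim(\mathbf V\cap S_m)$ can be as large as $3$, and nothing in the codimension fact prevents this. Second, the fiber bound $m-r$ is far too weak when $X$ itself is singular with a small orbit while $v$ is very singular: in $\mathfrak{su}(5)$ take $X$ of minimal-orbit type, so $\dim\mathcal{O}_X=8$, and $v$ conjugate to $i\,\mathrm{diag}(1,1,1,1,-4)$, so $m-r=16-4=12>8$; your stated bound then cannot conclude even if $\dim(\mathbf V\cap S_m)=0$. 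Your proposed sharpening $m-\dim\mathfrak c_{\mathfrak g}(\{v,X'\})$ would have to hold uniformly over \emph{every} $C_G(v)$-orbit in the fiber (you only choose one $X'$), and even granting that, in the $\mathfrak{su}(5)$ example the true fiber dimension is $6$, so the stratum contributes up to $6+\dim(\mathbf V\cap S_m)$, which is not $<8$ once that intersection has dimension $2$ — a possibility you have not excluded, since stratum intersections need not be linear subspaces and your ``no all-singular plane'' argument is only sketched, and only for $\mathfrak{su}(3)$.

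Third, even the $\mathfrak{su}(3)$ endgame is asserted rather than proved: that two independent singular traceless anti-Hermitian $3\times 3$ matrices span a plane containing a regular element needs an argument (one does exist: if every $sA+tB$ had eigenvalue pattern $(\lambda,\lambda,-2\lambda)$, then $\mathrm{tr}((sA+tB)^2)$ and $\mathrm{tr}((sA+tB)^3)$ would force a positive definite binary quadratic form $Q$ and a real binary cubic $C$ with $Q^3$ proportional to $C^2$, impossible since $C$ has a real linear factor). So the proposal identifies a plausible alternative strategy but does not carry it out: to make it work you would need, for every pair of strata of $X$ and $v$ in every simple type, a bound of the form $\dim(\mathbf V\cap S_m)+\dim(\mathcal{O}_X\cap\mathfrak c_{\mathfrak g}(v))<\dim\mathcal{O}_X$ together with a classification-free (or case-by-case) control of how large a linear space can sit inside each singular stratum — which is comparable in labor to, and currently much less complete than, the paper's case-by-case root-system verification.
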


%This is a slight refinement of Lemma 4.5 in \cite{DM5}.
Note that in Lemma \ref{key lemma}, (\ref{aftermerge-0000}) can
be equivalently stated as
$\mathbf{V}^{\perp_{\mathrm{bi}}}+ [\mathrm{Ad}(g)X,\mathfrak{g}]=\mathfrak{g}$. It is also equivalent to the assertion that
the orthogonal projection with respect to the bi-invariant metric from the $\mathrm{Ad}(G)$-orbit $\mathcal{O}_X$ to $\mathbf{V}$ has
a surjective tangent map somewhere, that is,   the image of the projection contains a non-empty open set
 of $\mathbf{V}$. We thus have the following

\begin{corollary}\label{aftermergekeylemmacorollary}
Let $G$ be a compact connected simple Lie group with rank\,\,$>1$, $\mathrm{Lie}\,(G)=\mathfrak{g}$,
 $X$  be a nonzero vector in $\mathfrak{g}$,
and $\mathbf{V}$ be a linear space with $\dim\mathbf{V}\leq 3$. Suppose $l$ is a
surjective linear map
from $\mathfrak{g}$ onto $\mathbf{V}$. Then  there exists $Y\in \mathcal{O}_X$ such that the restriction of $l$ to the $\mathrm{Ad}(G)$-orbit
$\mathcal{O}_X$ is regular, i.e.,  the tangent map of the restriction $l|_{\mathcal{O}_X}$
is surjective at $Y$.
\end{corollary}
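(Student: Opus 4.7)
The plan is to translate the desired surjectivity assertion into an intersection condition on $\mathfrak{g}$ that matches exactly the conclusion of the Key Lemma. First I would identify the relevant tangent space: at any point $Y=\mathrm{Ad}(g)X\in\mathcal{O}_X$, the tangent space to the orbit is $T_Y\mathcal{O}_X=[\mathfrak{g},Y]=\mathrm{ad}(Y)\mathfrak{g}$, and the tangent map of $l|_{\mathcal{O}_X}$ at $Y$ is simply the restriction $l|_{[\mathfrak{g},Y]}$. Hence surjectivity of this tangent map onto $\mathbf{V}$ is equivalent to
$$[\mathfrak{g},Y]+\ker l=\mathfrak{g}.$$

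Next I would dualize using the bi-invariant inner product $\langle\cdot,\cdot\rangle_{\mathrm{bi}}$ on $\mathfrak{g}$. Since $\mathrm{ad}(Y)$ is skew-symmetric with respect to this form, one has $[\mathfrak{g},Y]^{\perp_{\mathrm{bi}}}=\mathfrak{c}_{\mathfrak{g}}(Y)$, and because $\mathrm{Ad}(g)$ is a Lie algebra automorphism, $\mathfrak{c}_{\mathfrak{g}}(\mathrm{Ad}(g)X)=\mathrm{Ad}(g)\mathfrak{c}_{\mathfrak{g}}(X)$. Taking $\langle\cdot,\cdot\rangle_{\mathrm{bi}}$-orthogonal complements of the sum $[\mathfrak{g},Y]+\ker l$ therefore converts the displayed condition into
$$\mathrm{Ad}(g)\mathfrak{c}_{\mathfrak{g}}(X)\cap(\ker l)^{\perp_{\mathrm{bi}}}=0.$$

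Finally I would set $\mathbf{W}:=(\ker l)^{\perp_{\mathrm{bi}}}$. Because $l$ is surjective, $\dim\mathbf{W}=\dim\mathbf{V}\leq 3$; the case $\mathbf{V}=0$ is vacuous, so we may assume $\mathbf{W}\neq 0$, and the Key Lemma applies to $\mathbf{W}$ and furnishes $g\in G$ with $\mathbf{W}\cap\mathrm{Ad}(g)\mathfrak{c}_{\mathfrak{g}}(X)=0$. Then $Y=\mathrm{Ad}(g)X$ is the desired point on $\mathcal{O}_X$. The only care required lies in the orthogonal-complement bookkeeping of the previous step; there is no genuine obstacle, since all the real content has been packaged into the Key Lemma, which is exactly the form of the remark that immediately follows it.
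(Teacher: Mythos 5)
Your proposal is correct and follows essentially the same route as the paper, which obtains the corollary from the Key Lemma via exactly this bi-invariant duality: $[\mathfrak{g},\mathrm{Ad}(g)X]^{\perp_{\mathrm{bi}}}=\mathfrak{c}_{\mathfrak{g}}(\mathrm{Ad}(g)X)=\mathrm{Ad}(g)\mathfrak{c}_{\mathfrak{g}}(X)$, so that the intersection condition of the Key Lemma is equivalent to surjectivity of the tangent map of the (projection) map on the orbit. If anything, your use of $\mathbf{W}=(\ker l)^{\perp_{\mathrm{bi}}}$ is a slightly cleaner bookkeeping that covers an arbitrary surjective linear $l$, whereas the paper phrases the equivalence only for the orthogonal projection onto $\mathbf{V}\subset\mathfrak{g}$; the substance is identical.
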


\subsection{A criterion for KVFCLs}

Let $G$ be a compact connected simple Lie group with rank\,\,$>1$,
and $F$ be a non-Riemannian
left invariant $(\alpha_1,\alpha_2)$-metric on $G$, with a decomposition $\mathfrak{g}=T_eG=\mathbf{V}_1\oplus \mathbf{V}_2$, such that
$\mathbf{V}_2$ is a commutative subalgebra of $G$. Then by
Theorem \ref{theorem-3}, there exists
a good normalized datum of $F$, which defines a normalized datum
$(\phi,\alpha,\mathbf{V}_1,\mathbf{V}_2)$ for the induced Minkowski
norm on $T_e G$. For simplicity, we will use the same notations to denote the global
datum of $F$ and  the datum of the Minkowski norm on $T_e G$.
We keep the notations $\langle\cdot,\cdot\rangle$, $\langle\cdot,\cdot\rangle_{\mathrm{bi}}$,
$\langle\cdot,\cdot,\rangle_1$ and $\langle\cdot,\cdot\rangle_2$ as in the previous section.

 In the above we have showed that $I_0(G,F)=L(G)R(G')\subset L(G)R(G)$, where $G'$ is the maximal closed connected subgroup of $G$ whose $\mathrm{Ad}$-action  preserves
$\alpha$, $V_1$ and $V_2$. It is obvious that $\dim G'<\dim G$. The space of Killing vector fields of $F$ can be identified with
the Lie algebra $\mathfrak{g}\oplus\mathfrak{g}'$, where $\mathfrak{g}'=\mathrm{Lie}(G')$.

 Recall that if a Killing vector field of a left invariant Randers metric or a
left invariant $(\alpha,\beta)$-metric on the compact connected simple Lie group $G$ is
of constant length,  then we have either $X=0$ or $X'\in\mathfrak{c}(\mathfrak{g}')$ (see \cite{DM3} and \cite{DM5}).
This criterion is the key for our  study on CW-translations and the CW-homogeneity of left invariant $(\alpha,\beta)$-metrics on compact connected simple Lie groups.
Now we generalize this  criterion to $(\alpha_1,\alpha_2)$-metrics, under the assumption that
$\mathbf{V}_2$ is a commutative subalgebra of $\mathfrak g$.

\begin{theorem} \label{theorem-6}
Let $F$ be a left invariant non-Riemannian $(\alpha_1,\alpha_2)$-metric on
a compact connected simple Lie group $G$. With the same notations as above,
assume that the subspace $\mathbf{V}_2\subset \mathfrak{g}$ is a commutative subalgebra with  dimension $n_2>1$. Let $(X,X')\in\mathfrak{g}\oplus\mathfrak{g}'$ be a nonzero vector which defines a KVFCL on $(G, F)$. Then we have either $X=0$ or $X'\in\mathfrak{c}(\mathfrak{g}')$.
\end{theorem}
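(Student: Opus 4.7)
My plan is to argue by contradiction: assume $X\neq 0$ and $X'\notin\mathfrak{c}(\mathfrak{g}')$, and derive a contradiction. First I would rewrite the constant-length hypothesis as a statement on an adjoint orbit. Under the left trivialization of $TG$, the Killing vector field associated to the pair $(X,X')$ has value $\mathrm{Ad}(g^{-1})X - X'$ at $g\in G$, so its having constant $F$-length translates to $F(Z - X') = c$ for every $Z$ in the adjoint orbit $\mathcal{O}_X := \mathrm{Ad}(G)X$. Because $\mathrm{Ad}(G')$ acts on $\mathfrak{g}$ by linear isometries of $F$ (by construction of $G'$) and preserves $\mathcal{O}_X$, applying $\mathrm{Ad}(g')$ to both sides and rearranging strengthens this to $F(Z - X'') = c$ for all $Z\in\mathcal{O}_X$ and all $X''\in \mathcal{O}'_{X'}:=\mathrm{Ad}(G')X'$. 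Under the contradictory hypothesis $X'\notin\mathfrak{c}(\mathfrak{g}')$, the orbit $\mathcal{O}'_{X'}$ has positive dimension, so there exists $Y'\in\mathfrak{g}'$ with $A:=[Y',X']\neq 0$.

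Next I would differentiate and set up orthogonality relations. Writing $F^2 = L(\alpha_1^2,\alpha_2^2)$ and using the $\alpha$-orthogonal splittings $u=u_1+u_2$, $v=v_1+v_2$, the fundamental tensor at $u=Z-X'$ takes the block-diagonal form $g_u(u,v) = L_1\langle u_1,v_1\rangle + L_2\langle u_2,v_2\rangle$, with $L_1,L_2>0$ by strong convexity, evaluated at $(\alpha_1^2(u),\alpha_2^2(u))$. Differentiating the strengthened identity along tangent directions of $\mathcal{O}_X$ and $\mathcal{O}'_{X'}$ yields, for every $Z\in\mathcal{O}_X$, $W\in\mathfrak{g}$, and $Y'\in\mathfrak{g}'$, the two coupled identities
\begin{align*}
L_1\langle u_1,[W,Z]_1\rangle + L_2\langle u_2,[W,Z]_2\rangle &= 0,\\
L_1\langle u_1,[Y',X']_1\rangle + L_2\langle u_2,[Y',X']_2\rangle &= 0.
\end{align*}
I would then invoke Corollary~\ref{aftermergekeylemmacorollary}. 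Since $\dim\mathbf{V}_2=2\le 3$, the surjective projection $l=\mathrm{pr}_2\colon\mathfrak{g}\to\mathbf{V}_2$ has a regular point on $\mathcal{O}_X$, producing $Z_0\in\mathcal{O}_X$ with $\mathrm{pr}_2[\mathfrak{g},Z_0]=\mathbf{V}_2$; if further leverage is required I would instead apply the Corollary to the at-most-$3$-dimensional subspace $\mathbf{V}_2+\mathbb{R}A$, whose $\mathcal{O}_X$-projection is then surjective at some $Z_0$, giving simultaneous transversality for both $\mathbf{V}_2$-directions and the direction $A$.

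At such a $Z_0$, the first orthogonality relation, with $[W,Z_0]_2$ filling out all of $\mathbf{V}_2$, tightly couples $u_{0,2}=Z_{0,2}-X'_2$ to $u_{0,1}$; feeding this coupling into the second orthogonality relation as $Z_0$ is varied along $\mathcal{O}_X$ (again via Corollary~\ref{aftermergekeylemmacorollary}) should force $\mathrm{pr}_2 A = 0$. A parallel argument, exploiting that $A\in\mathfrak{g}'$ together with the strict inclusion $\mathfrak{g}'\subsetneq\mathfrak{g}$, should then force $\mathrm{pr}_1 A = 0$ as well, giving $A=0$ and contradicting the choice of $Y'$. The main obstacle is this final step: the positive coefficients $L_1,L_2$ depend on $u$ and hence vary with $Z_0$, so combining the orthogonality relations cleanly across different choices of $Z_0$ requires selecting $Z_0$ within the regular set supplied by the Key Lemma where the $(L_1,L_2)$-dependence can be controlled, and the passage from a partial vanishing such as $\mathrm{pr}_2 A=0$ to the full $A=0$ is the most delicate part, likely requiring additional structural input about how $\mathfrak{g}'$ embeds in $\mathfrak{g}$ relative to $\mathbf{V}_1\oplus\mathbf{V}_2$, analogous to the role played by the root decomposition in the proof of Theorem~\ref{main-thm-0}.
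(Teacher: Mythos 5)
Your setup is fine as far as it goes: the reformulation $F(Z-X'')=c$ for all $Z\in\mathcal{O}_X$, $X''\in \mathrm{Ad}(G')X'$, and the two differentiated identities $L_1\langle u_1,[W,Z]_1\rangle+L_2\langle u_2,[W,Z]_2\rangle=0$ and $L_1\langle u_1,A_1\rangle+L_2\langle u_2,A_2\rangle=0$ are correct necessary conditions. But the proof has a genuine gap exactly where you flag it: the passage from these relations to $\mathrm{pr}_2A=0$ and then to $A=0$ is only asserted (``should force''), and the obstacle you name --- that $L_1,L_2$ depend on $u=Z-X'$ and vary as $Z$ moves along $\mathcal{O}_X$ --- is not overcome; choosing $Z_0$ in the regular set supplied by the Key Lemma does not by itself control the coefficients. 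Two further signs that the route is off: you nowhere use the hypothesis that $\mathbf{V}_2$ is commutative, which is the decisive structural input, and you silently assume $\dim\mathbf{V}_2=2$ (to make the Key Lemma corollary applicable to $\mathbf{V}_2$ or $\mathbf{V}_2+\mathbb{R}A$), whereas the theorem only assumes $n_2>1$, so even a completed version of your argument would prove a weaker statement.

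The paper's proof needs neither linearization nor the Key Lemma. Since $\mathbf{V}_2$ is commutative, $\mathfrak{g}'$ lies in the centralizer $\mathfrak{c}_{\mathfrak{g}}(\mathbf{V}_2)$, so $\mathrm{Ad}(g')$ fixes $\mathrm{pr}_2X'$ and hence, for fixed $g$, the quantity $\alpha_2(\mathrm{Ad}(g)X-\mathrm{Ad}(g')X')$ does not depend on $g'$. Writing $F=\alpha\,\phi(\alpha_2/\alpha)$ and using the strong-convexity consequence $\phi(s)-s\phi'(s)>0$ (so $\phi(s)/s$ is strictly decreasing), constancy of $F$ together with constancy of $\alpha_2$ forces $\alpha(\mathrm{Ad}(g)X-\mathrm{Ad}(g')X')$ to be constant in $g'$; expanding the square then shows $\langle \mathrm{Ad}(g)X,\mathrm{Ad}(g')X'\rangle$ is constant in $g'$, i.e. $\mathrm{Ad}(g)X$ is $\alpha$-orthogonal to every tangent space of the orbit $\mathrm{Ad}(G')X'$, whose span is the ideal of $\mathfrak{g}'$ generated by $[X',\mathfrak{g}']$. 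Letting $g$ vary and using that for $X\neq0$ the tangent spaces of $\mathcal{O}_X$ span the ideal generated by $[X,\mathfrak{g}]$, which is all of $\mathfrak{g}$ by simplicity, one concludes that the ideal generated by $[X',\mathfrak{g}']$ is zero, i.e. $X'\in\mathfrak{c}(\mathfrak{g}')$. This argument is where the commutativity of $\mathbf{V}_2$ and the monotonicity of $\phi(s)/s$ do the work that your sketch leaves undone, and it applies for every $n_2>1$.
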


\begin{proof}
Let $(X,X')$ be a KVFCL of $F$. Then
$$
\alpha(\mathrm{Ad}(g) X-\mathrm{Ad}(g')X')\phi(\frac{\alpha_2(\mathrm{Ad}(g) X-\mathrm{Ad}(g')X')}
{\alpha(\mathrm{Ad}(g) X-\mathrm{Ad}(g')X')})
$$
is a constant function of  $g\in G$ and $g'\in G'$. The Lie algebra $\mathfrak{g}'$ is a subalgebra of the
normalizer of $\mathbf{V}_2$ in $\mathfrak{g}$,  which is equal to  the centralizer
$\mathfrak{c}_{\mathfrak{g}}(\mathbf{V}_2)$. Thus for any fixed  $g$, the function
\begin{eqnarray*}
\alpha_2(\mathrm{Ad}(g) X-\mathrm{Ad}(g')X')&=&\alpha({\rm pr}_2 \mathrm{Ad}(g) X-{\rm pr}_2 \mathrm{Ad}(g')X')\nonumber\\
&=&\alpha({\rm pr}_2 \mathrm{Ad}(g) X-{\rm pr}_2 X')
\end{eqnarray*}
is a constant function of $g'$. By the assumption on the smooth function $\phi$, we
have $\phi(s)-s\phi'(s)>0, \forall s\in [0,1]$. Hence  $\frac{\phi(s)}{s}$ is strictly
decreasing with respect to $s$. Therefore $\alpha(\mathrm{Ad}(g) X-\mathrm{Ad}(g')X')$ is also a constant function of $g'$ for the fixed $g$. Since neither $\alpha(\mathrm{Ad}(g) X)$ nor $\alpha(\mathrm{Ad}(g')X')$  depends on $g'$,
$\langle \mathrm{Ad}(g) X,\mathrm{Ad}(g')X'\rangle$ is a constant function of $g'$.
Thus $\mathrm{Ad}(g) X$ is $\alpha$-orthogonal to all the tangent spaces of the $\mathrm{Ad}(G')$-orbit
$\mathcal{O}_{X';\mathfrak{g'}}\subset\mathfrak{g}'$, whose linear span is the ideal generated by $[X',\mathfrak{g}']$ in $\mathfrak{g}'$. Now letting $g$ change as well, one concludes that all the tangent spaces of the $\mathrm{Ad}(G)$-orbit $\mathcal{O}_{X;\mathfrak{g}}$  are $\alpha$-orthogonal to the ideal
generated by $[X',\mathfrak{g}']$ in $\mathfrak{g}'$. When $X\neq 0$,  the tangent spaces
of the $\mathrm{Ad}(G)$-orbit $\mathcal{O}_{X,\mathfrak{g}}$  span the ideal generated by
$[X,\mathfrak{g}]$,  which by the simplicity is equal to $\mathfrak{g}$. Therefore  the ideal of $\mathfrak{g}'$ generated by
$[X',\mathfrak{g}']$ is 0, i.e.,  $X'\in\mathfrak{c}(\mathfrak{g'})$. This completes the proof.
\end{proof}
\subsection{The decomposition of the set of KVFCLs}

Theorem \ref{theorem-6}  enables us to decompose the set
of KVFCLs into two closed subsets. In the following we denote the set of KVFCLs of
a Finsler metric $F$ as $\mathcal{K}_{F}$, which can be naturally identified with a closed subset of $\mathfrak{g}\oplus\mathfrak{g}'$. Consider a left invariant non-Riemannian  $(\alpha_1,\alpha_2)$-metric $F$ on a compact connected simple Lie group $G$, with  dimension decomposition $(n_1,n_2)$, where $n_1\geq n_2>1$. Assume that the subspace $\mathbf{V}_2$
in the decomposition $\mathfrak{g}=\mathbf{V}_1+\mathbf{V}_2$  is a commutative subalgebra.
We  denote the closure of $\{(X,X')\in\mathcal{K}_F|X\neq 0,X'\in\mathfrak{c}(\mathfrak{g}')\}$ as $\mathcal{K}_{F;1}$, and that of $0\oplus\mathfrak{g}'$ as $\mathcal{K}_{F;2}$. Theorem
\ref{theorem-6} amounts to saying that $\mathcal{K}_F$ is the union of the two closed subsets
$\mathcal{K}_{F;1}$ and $\mathcal{K}_{F;2}$.
%The subset $\mathcal{K}_{F;2}$ is obviously closed. We now show that the subset %$\mathcal{K}_{F;1}$ is also closed.
In the following we shall  show that in many cases we have $\mathcal{K}_{F;2}\cap\mathcal{K}_{F;2}=\{0\}$.

\begin{lemma}\label{lemma-7}
 Let $F$ be a left invariant non-Riemannian $(\alpha_1,\alpha_2)$-metric on a compact connected simple Lie group $G$ as  above. Then there is a constant $C>0$, such that for any $(X,X')\in\mathcal{K}_{F;1}$, we have $||X'||_{bi}\leq C||X||_{bi}$,
where $||\cdot||_{\mathrm{bi}}$ is the norm of a bi-invariant inner product $\langle\cdot,\cdot\rangle$ on $\mathfrak{g}$.
%In particular, the subset $\mathcal{K}_{F;1}$ is closed in $\mathfrak{K}_F$.
\end{lemma}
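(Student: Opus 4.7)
The plan is a contradiction/compactness argument that reduces the inequality to showing $\mathcal{K}_{F;1}\cap(0\oplus\mathfrak{g}')=\{0\}$, after which a first-order perturbation of the KVFCL condition produces an $\mathrm{Ad}(G)$-invariant subspace of $\mathfrak{g}$ that must vanish by simplicity.

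First I would verify that $\mathcal{K}_{F;1}$ is a closed cone in $\mathfrak{g}\oplus\mathfrak{g}'$. The KVFCL condition $F(\mathrm{Ad}(g^{-1})X-X')\equiv\mathrm{const}$ on $G$ is homogeneous of degree $1$ in $(X,X')$, so $\mathcal{K}_F$ and the subset $\{(X,X')\in\mathcal{K}_F\colon X\ne 0,\ X'\in\mathfrak{c}(\mathfrak{g}')\}$ are both stable under scalar multiplication, and hence so is $\mathcal{K}_{F;1}$. Since $\mathcal{K}_{F;1}\cap\{\|X\|_{\mathrm{bi}}^2+\|X'\|_{\mathrm{bi}}^2=1\}$ is compact, the desired bound is equivalent to the claim that $\|X\|_{\mathrm{bi}}$ is bounded away from zero on this slice, i.e.\ to $\mathcal{K}_{F;1}\cap(0\oplus\mathfrak{g}')=\{0\}$.

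Now assume for contradiction that $(0,X')\in\mathcal{K}_{F;1}$ with $X'\ne 0$, and choose a sequence $(X_n,X_n')$ in the defining set with $X_n\to 0$, $X_n\ne 0$, $X_n'\to X'$. Exactly as in the proof of Theorem \ref{theorem-6}, the strict decrease of $\phi(s)/s$, combined with the fact that $\mathrm{pr}_2X_n'$ is fixed by $\mathrm{Ad}(G')$, upgrades the constancy of $F$ to constancy of $\alpha$, so
\[
\alpha(\mathrm{Ad}(g)X_n-X_n')=\mathrm{const}\qquad\forall\,g\in G.
\]
Let $T:\mathfrak{g}\to\mathfrak{g}$ be the positive self-adjoint operator with $\langle u,v\rangle=\langle Tu,v\rangle_{\mathrm{bi}}$. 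Squaring, differentiating in $g$ at $g=g_0$ in direction $Z\in\mathfrak{g}$, and then moving $T$ through the inner product and using bi-invariance yields the key identity
\[
[W,TX_n']=[W,TW]\qquad\forall\,W\in\mathcal{O}_{X_n}.
\]

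To finish, write $X_n=\epsilon_n\hat X_n$ with $\epsilon_n=\|X_n\|_{\mathrm{bi}}\to 0^+$ and $\|\hat X_n\|_{\mathrm{bi}}=1$, and pass to a subsequence with $\hat X_n\to Y$, $\|Y\|_{\mathrm{bi}}=1$. Substituting $W=\mathrm{Ad}(g)X_n=\epsilon_n\mathrm{Ad}(g)\hat X_n$ into the identity and dividing by $\epsilon_n$ makes the right-hand side $O(\epsilon_n)$, so in the limit
\[
[\mathrm{Ad}(g)Y,TX']=0\qquad\forall\,g\in G.
\]
Therefore $TX'\in\bigcap_{g\in G}\mathrm{Ad}(g)\mathfrak{c}_\mathfrak{g}(Y)$, which is an $\mathrm{Ad}(G)$-invariant subspace of $\mathfrak{g}$ and hence an ideal. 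Since $\mathfrak{g}$ is simple with trivial center and $Y\ne 0$, this ideal must be $\{0\}$, and positivity of $T$ then forces $X'=0$, contradicting $X'\ne 0$. The main obstacle I foresee is not the final Lie-algebraic step, which is clean thanks to simplicity, but the careful bookkeeping required to promote the constancy of $F$ to the constancy of $\alpha$ along the rescaled sequence and to verify that the resulting limiting identity is the correct one.
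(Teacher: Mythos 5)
Your compactness reduction is fine: $\mathcal{K}_{F;1}$ is a closed positive cone (positive homogeneity of $F$), so the bound $\|X'\|_{\mathrm{bi}}\leq C\|X\|_{\mathrm{bi}}$ is indeed equivalent to $\mathcal{K}_{F;1}\cap(0\oplus\mathfrak{g}')=\{0\}$, and your final Lie-algebraic step is clean. The genuine gap is the step where you claim that the KVFCL condition can be ``upgraded'' to $\alpha(\mathrm{Ad}(g)X_n-X'_n)=\mathrm{const}$ for all $g\in G$. The monotonicity argument in the proof of Theorem \ref{theorem-6} does not give this: there one varies $g'\in G'$ with $g\in G$ fixed, and the whole point is that $\alpha_2(\mathrm{Ad}(g)X-\mathrm{Ad}(g')X')=\alpha(\mathrm{pr}_2\mathrm{Ad}(g)X-\mathrm{pr}_2X')$ is constant in $g'$ because $\mathrm{Ad}(g')$ preserves the splitting and acts trivially on $\mathbf{V}_2$; only with $\alpha_2$ frozen does the strict monotonicity coming from $\phi(s)-s\phi'(s)>0$ let you solve $\alpha\,\phi(\alpha_2/\alpha)=\mathrm{const}$ for $\alpha$. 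When $g$ ranges over $G$, $\mathrm{pr}_2\mathrm{Ad}(g)X_n$ genuinely moves, $\alpha_2(\mathrm{Ad}(g)X_n-X'_n)$ is not constant, and constancy of $F$ gives no control of $\alpha$ alone. (For elements of $\mathcal{K}_{F;1}$ the $G'$-variation is in fact vacuous, since $X'\in\mathfrak{c}(\mathfrak{g}')$ is fixed by $\mathrm{Ad}(G')$.) Worse, if both $F$ and $\alpha$ were constant along the orbit, the $\alpha$-orthogonal projection of $\mathcal{O}_{X_n}$ into $\mathbf{V}_2$ would essentially be trapped in an ellipsoid, which is exactly the degenerate behaviour the paper excludes via Corollary \ref{aftermergekeylemmacorollary}; so your premise is generally false, not merely unproven, and the identity $[W,TX'_n]=[W,TW]$ derived from it is unjustified.

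The damage is localized and your scheme can be repaired by differentiating the exact identity instead: from $F^2(\mathrm{Ad}(g)X_n-X'_n)=c_n^2$ one gets, with $W=\mathrm{Ad}(g)X_n$, that $\langle W-X'_n,[Z,W]\rangle_{W-X'_n}=0$ for all $Z\in\mathfrak{g}$, where $\langle\cdot,\cdot\rangle_y$ is the fundamental form of $F$ at $y$; dividing by $\epsilon_n=\|X_n\|_{\mathrm{bi}}$ and letting $n\to\infty$ gives $\langle X',[Z,\mathrm{Ad}(g)Y]\rangle_{-X'}=0$ for all $Z\in\mathfrak{g}$, $g\in G$, and since $\mathrm{span}\,\mathrm{Ad}(G)[\mathfrak{g},Y]$ is a nonzero ideal, hence all of $\mathfrak{g}$, positive definiteness of $\langle\cdot,\cdot\rangle_{-X'}$ forces $X'=0$. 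Note also that even this corrected first-order argument differs from the paper's proof, which normalizes the other way ($\|X_n\|_{\mathrm{bi}}=1$, $\|X'_n\|_{\mathrm{bi}}\to\infty$) and argues that the level hypersurfaces $\{Y\mid F(Y-X'_n)=l_n\}$ containing $\mathcal{O}_{X_n}$ flatten as $l_n\to\infty$, so that in the limit the orbit of a nonzero vector lies in an affine hyperplane, contradicting simplicity. As written, however, your proof has a real gap at the constancy-of-$\alpha$ step.
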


\begin{proof}
In the proof, the Lie algebra $\mathfrak{g}$ will be viewed as a flat manifold with the metric $\langle\cdot,\cdot\rangle_{\mathrm{bi}}$, and any submanifold in it will be endowed
with the induced metric.

Suppose conversely that the constant $C>0$ indicated in the lemma does not exist. Then
there is a sequence of $(X_n,X'_n)\in\mathcal{K}_{F;1}$ such that
$||X_n||_{\mathrm{bi}}=1$, $X'_n\in\mathfrak{c}(\mathfrak{g}')$ and
$\mathop{\lim}\limits_{n\rightarrow\infty}||X'_n||_{\mathrm{bi}}=\infty$.
Denote $F(\mathrm{Ad}(g)X_n-X'_n)=l_n$. Then the sequence $\{l_n\}$ also diverges to $\infty$.
The $\mathrm{Ad}(G)$-orbit $\mathcal{O}_{X_n}$ is contained in
the hypersurface
\begin{equation}
\mathcal{S}_n=\{Y|F(Y-X'_n)=l_n\}\subset \mathfrak{g},
\end{equation}
on which the $C^0$-norm of all
principal curvatures converges to $0$ when $n\rightarrow \infty$. Taking  a suitable sequence if necessary,
we can assume that  $\mathop{\lim}\limits_{n\rightarrow\infty}X_n=X$. Let $B(3)$ be  the closed round ball with center $0$
and radius $3$ (with respect to the bi-invariant metric), then the intersectional set $B(3)\cap \mathcal{S}_n$ converges to a closed set of the form $B(3)\cap \mathcal{S}$, where $\mathfrak{S}$ is a flat hyperplane $\mathcal{S}$ of codimension $1$
in $\mathfrak{g}$. This implies that the hyperplane $\mathcal{S}$ contains the $\mathrm{Ad}(G)$-orbit
$\mathcal{O}_X$ of the nonzero vector $X$. Thus $\mathfrak{g}$ has a nontrivial ideal, contradicting to the assumption that  $G$ is a simple Lie group.
\end{proof}

The KVFCLs in $\mathcal{K}_{F;2}$ or the CW-translations generated by them are relevant to the (connected) isometry group rather than the metric $F$. Therefore such kind of Killing vector fields  are of  little interest to our study. In most  cases, we need only consider the Killing vector fields in $\mathcal{K}_{F;1}\backslash \{0\}$ and the corresponding CW-translations, as implied by the following corollary of Lemma \ref{lemma-7}.

\begin{corollary}\label{aftermergecorollary-0}
Let $F$ be a left invariant non-Riemannian $(\alpha_1,\alpha_2)$-metric on a compact connected simple Lie group $G$. Then $F$ is restrictively CW-homogeneous if and only if any nonzero tangent vector can be extended to a Killing vector field
$(X,X')\in\mathcal{K}_{F;1}$ with $X\neq 0$.
\end{corollary}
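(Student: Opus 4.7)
The plan is to split the proof into the easy direction and the hard direction. The ``if'' part is immediate: since $\mathcal{K}_{F;1}\subset\mathcal{K}_F$, extending every nonzero tangent vector to an element of $\mathcal{K}_{F;1}$ in particular extends it to a KVFCL, and left invariance reduces extensions at any point to extensions at $e$. So Proposition~\ref{proposition-0} yields restrictive CW-homogeneity at once.

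For the ``only if'' direction I fix a nonzero $Y\in\mathfrak{g}=T_eG$; by left invariance this suffices. Proposition~\ref{proposition-0} produces a KVFCL $(X,X')\in\mathcal{K}_F$ with $X-X'=Y$. When $Y\notin\mathfrak{g}'$ the conclusion is direct: since $X'\in\mathfrak{g}'$ one has $X=Y+X'\notin\mathfrak{g}'$, in particular $X\neq 0$, so Theorem~\ref{theorem-6} forces $X'\in\mathfrak{c}(\mathfrak{g}')$, and $(X,X')$ then lies in the pre-closure set defining $\mathcal{K}_{F;1}$.

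The real work is the case $Y\in\mathfrak{g}'$. Since $\dim\mathfrak{g}'<\dim\mathfrak{g}$, the set $\mathfrak{g}\setminus\mathfrak{g}'$ is open and dense; I pick $Y_n\to Y$ with $Y_n\notin\mathfrak{g}'$ and apply the previous paragraph to obtain $(X_n,X'_n)\in\mathcal{K}_{F;1}$ with $X_n\neq 0$ and $X_n-X'_n=Y_n$. If this sequence is bounded in the bi-invariant norm on $\mathfrak{g}\oplus\mathfrak{g}'$, a subsequence converges to $(X,X')\in\mathcal{K}_{F;1}$ (closed) with $X-X'=Y$; and $X\neq 0$ because $X=0$ would force $X'=0$ by Lemma~\ref{lemma-7}, contradicting $Y\neq 0$.

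The main obstacle, and the place where simplicity of $G$ enters, is ruling out unbounded sequences. If $t_n:=\|(X_n,X'_n)\|_{\mathrm{bi}}\to\infty$, the rescaled pairs $(X_n/t_n,X'_n/t_n)$ still lie in the cone $\mathcal{K}_{F;1}$, have unit norm, and subsequentially converge to $(\widetilde{X},\widetilde{X}')\in\mathcal{K}_{F;1}$ of unit norm with $\widetilde{X}-\widetilde{X}'=\lim(Y_n/t_n)=0$. The associated Killing field then has value $0$ at $e$, so its constant length is $0$ and it vanishes identically on $G$. Writing its value at $g$ in the left trivialization as $\mathrm{Ad}(g^{-1})\widetilde{X}-\widetilde{X}'$ and setting this equal to $0$ for every $g$ forces $\widetilde{X}=\widetilde{X}'\in\mathfrak{c}(\mathfrak{g})=0$, the last equality coming from the simplicity of $G$. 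This contradicts the unit-norm condition and closes the argument.
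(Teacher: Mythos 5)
Your proof is correct and takes essentially the same route as the paper: the ``if'' part is immediate from Proposition \ref{proposition-0}, and for the ``only if'' part you approximate a vector of $\mathfrak{g}'$ by tangent vectors outside $\mathfrak{g}'$, extend each to an element of $\mathcal{K}_{F;1}$ via Theorem \ref{theorem-6}, and pass to a limit in the closed set $\mathcal{K}_{F;1}$, using Lemma \ref{lemma-7} to guarantee $X\neq 0$. Your rescaling argument (using that $\mathcal{K}_{F;1}$ is a cone and that simplicity kills $\mathrm{Ad}$-invariant vectors) makes explicit the boundedness of the approximating sequence, a point the paper compresses into its ``diagonal argument.''
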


\begin{proof} We just need to prove the ``only if" part.  Since $F$ is homogeneous, we need only prove the assertion for vectors in $T_e(G)$. Now suppose $F$ is restrictively CW-homogeneous and $v\in T_e(G)\backslash \{0\}$. Then there exists a Killing vector field
$(X, X')\in \mathcal{K}_F$ such that the value of $(X, X')$ at $e$ is $v$. Note that $(X, X')\in \mathcal{K}_{F;1}$ if $X\ne 0$. If $X=0$, then we have $v\in \mathfrak{g}'$. As $\dim \mathfrak{g}'<\dim \mathfrak{g}$,
there is a sequence of tangent vectors $v_n\in T_e G$  with $v_n\notin \mathfrak{g}'$,
$\forall n$, such that $v=\mathop{\lim}\limits_{n\to\infty}v_n$. By the above argument, each vector $v_n$ can be extended to a sequence of nonzero
Killing vector fields contained in $\mathcal{K}_{F;1}$. Using a diagonal argument, one can find
a sequence of Killing vector fields $\{w_n\}$, such that  the limit $w=\mathop{\lim}\limits_{n\to\infty}w_n$ exits. Then
  $w$ is a nonzero KVFCL and the value of $w$ at $e$ is $v$. Since $\mathcal{K}_{F:1}$ is a closed subset and $w$ is the limit of a sequence in $\mathcal{K}_{F;1}$, $w$  can be represented as $(X,X')\in\mathcal{K}_{F;1}$ with $X\neq 0$.  This completes the proof of the corollary.
\end{proof}

In the special case of $n_2=2$,
we have the following theorem.

\begin{theorem}\label{theorem-7}
Let $F$ be a left invariant non-Riemannian $(\alpha_1,\alpha_2)$-metric on a compact
connected simple Lie group $G$, with dimension decomposition $(n_1,n_2)$, where $n_1> n_2=2$. With the same notations as above, assume that the subspace
$\mathbf{V}_2\subset\mathfrak{g}$ is a commutative subalgebra. Then for any   $(X,X')\in \mathcal{K}_{F;1}$ and $(X,X'')\in \mathcal{K}_{F;1}$, we have $X'-X''\in \mathbf{V}_2$.
In particular, when $X=0$, both $X'$ and $X''$ must be 0, hence
$\mathcal{K}_{F;1}\cap\mathcal{K}_{F;2}=\{0\}$.
\end{theorem}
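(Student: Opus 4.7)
The ``in particular'' clause will follow immediately from Lemma~\ref{lemma-7}: if $(0,X') \in \mathcal{K}_{F;1}$ then $(0,X')$ is a limit of pairs $(X_n,X'_n)$ with $X_n \ne 0$ and $\|X'_n\|_{\mathrm{bi}} \le C\|X_n\|_{\mathrm{bi}}$, so $X_n\to 0$ forces $X' = 0$; applying the same reasoning to $X''$ gives $\mathcal{K}_{F;1}\cap\mathcal{K}_{F;2} = \{0\}$. The real content is for $X \ne 0$, where I set $D = X'-X'' = D_1+D_2$ with $D_i \in \mathbf{V}_i$ and aim to prove $D_1 = 0$.

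My plan is to translate both KVFCL identities into $\alpha$-orthogonality conditions, then apply Corollary~\ref{aftermergekeylemmacorollary} to a $3$-dimensional subspace adapted to $D_1$. Writing $F^2 = L(\alpha_1^2,\alpha_2^2)$ and differentiating the constancy of $F(\mathrm{Ad}(g)X - X')$ and $F(\mathrm{Ad}(g)X - X'')$ in $g$, these conditions will be equivalent to saying that at every $Y = \mathrm{Ad}(g)X \in \mathcal{O}_X$ the two Legendre-type vectors
\[
\xi'(Y) = L_1'\,(\pi_1 Y - X'_1) + L_2'\,(\pi_2 Y - X'_2),\qquad
\xi''(Y) = L_1''\,(\pi_1 Y - X''_1) + L_2''\,(\pi_2 Y - X''_2)
\]
are $\alpha$-perpendicular to the orbit tangent $[\mathfrak{g},Y]$, where $L_1',L_2'$ (respectively $L_1'',L_2''$) are the partial derivatives of $L$ at the $(\alpha_1^2,\alpha_2^2)$-values of $Y - X'$ (respectively $Y - X''$); all four are strictly positive by the convexity condition in Theorem~\ref{define-requirement}.

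Assuming $D_1 \ne 0$ for contradiction, $D_1 \in \mathbf{V}_1$ is $\alpha$-orthogonal to $\mathbf{V}_2$, so with $\dim\mathbf{V}_2 = 2$ the subspace $\mathbf{V} := \mathbf{V}_2 \oplus \mathbb{R}D_1 \subset \mathfrak{g}$ has dimension precisely $3$. Corollary~\ref{aftermergekeylemmacorollary}, applied to the $\alpha$-orthogonal projection $l : \mathfrak{g} \twoheadrightarrow \mathbf{V}$, will then produce a point $Y_0 = \mathrm{Ad}(g_0)X \in \mathcal{O}_X$ at which $l|_{[\mathfrak{g},Y_0]} \twoheadrightarrow \mathbf{V}$ is surjective. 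Choose a linear section $s : \mathbf{V} \to [\mathfrak{g},Y_0]$; since $\ker l = \mathbf{V}^{\perp_\alpha} = \mathbf{V}_1 \cap D_1^{\perp_\alpha}$, one can write $s(D_1) = D_1 + s_0$ with $s_0 \in \mathbf{V}_1 \cap D_1^{\perp_\alpha}$. Testing the orthogonality $\xi'(Y_0) \perp_\alpha s(D_1)$ and using that $D_1 + s_0 \in \mathbf{V}_1$ is $\alpha$-orthogonal to $\mathbf{V}_2$, the $\mathbf{V}_2$-contribution of $\xi'$ will drop out, and, after dividing by $L_1'>0$, will give
\[
\langle \pi_1 Y_0 - X'_1,\ D_1 + s_0 \rangle = 0.
\]
The analogous argument for $\xi''(Y_0)$ gives $\langle \pi_1 Y_0 - X''_1,\ D_1 + s_0 \rangle = 0$. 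Subtracting, using $X''_1 - X'_1 = -D_1$ and $s_0 \perp_\alpha D_1$, leaves $-\|D_1\|_\alpha^2 = 0$, the desired contradiction.

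The only delicate step is choosing the subspace $\mathbf{V}$ adapted to the unknown $D_1$ and arranging the section $s$ compatibly with the $\alpha$-orthogonal decomposition; both are automatic once one observes that $\ker l$ sits inside $\mathbf{V}_1 \cap D_1^{\perp_\alpha}$, which is what produces the clean cancellation after subtracting the two conditions. The hypothesis $n_2 = 2$ is sharp, matching the $\dim\mathbf{V}\le 3$ bound of Corollary~\ref{aftermergekeylemmacorollary}; this is precisely why the paper leaves the stronger case of arbitrary commutative $\mathbf{V}_2$ only as a conjecture.
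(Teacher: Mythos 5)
Your proposal is correct and takes essentially the paper's own route: differentiate the constancy of $F(\mathrm{Ad}(g)X-X')$ in $g$ to get the $L_1,L_2$-weighted orthogonality relation, use $L_1>0$ and the $\alpha$-orthogonality of $\mathbf{V}_1$ and $\mathbf{V}_2$ to isolate the $\langle\cdot,\cdot\rangle_1$-condition on vectors of $[\mathfrak{g},\mathrm{Ad}(g)X]$ lying in $\mathbf{V}_1$, subtract the two conditions, and reduce to the Key Lemma through a $3$-dimensional subspace, with the $X=0$ case handled by Lemma \ref{lemma-7} exactly as in the paper. The only cosmetic difference is that you invoke Corollary \ref{aftermergekeylemmacorollary} for the $\alpha$-orthogonal projection onto $\mathbf{V}_2\oplus\mathbb{R}D_1$ and test one section vector $D_1+s_0$, whereas the paper applies Lemma \ref{key lemma} to $\mathbb{R}V+\mathbf{V}_1^{\perp_{\mathrm{bi}}}$ to show the sets $[\mathfrak{g},\mathrm{Ad}(g)X]\cap\mathbf{V}_1$ span all of $\mathbf{V}_1$.
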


\begin{proof}
If $X=0$,   then by Lemma \ref{lemma-7} we have $X'=X''=0$.
Thus $\mathcal{K}_{F;1}\cap\mathcal{K}_{F;2}=\{0\}$.
Therefore we need only consider the case $X\ne 0$.
Let $(X,X')$ be a KVFCL of $F$ with $X\neq 0$.
Write $F$ as $F=\sqrt{L(\alpha_1^2,\alpha_2^2)}$.
Then
\begin{equation}\label{aftermerge0}
L(\alpha_1^2(Ad(\exp(tY)g)X-X'),\alpha_2^2(Ad(\exp(tY)g)X-X'))
\end{equation}
is a constant function of $g\in G$ and $Y\in\mathfrak{g}$.
 Denote $u=\alpha_1^2(\mathrm{Ad}(g) X-X')$ and $v=\alpha_2^2(\mathrm{Ad}(g) X-X')$. Taking the partial derivative of (\ref{aftermerge0}) with respect to $t$ and considering the value at  $t=0$,  we have
{\small\begin{equation}\label{aftermerge1}
L_1(u,v)\langle [Y,\mathrm{Ad}(g) X],\mathrm{Ad}(g) X-X'\rangle_1
+L_2(u,v)\langle [Y,\mathrm{Ad}(g) X],\mathrm{Ad}(g) X-X'\rangle_2=0,
\end{equation}}
where $L_1(\cdot,\cdot)$ and $L_2(\cdot,\cdot)$ are the partial derivatives of $L$,
which are  positive everywhere.
Note that for $[Y,\mathrm{Ad}(g) X]\in \mathbf{V}_1$, we have $\langle [Y,\mathrm{Ad}(g) X],\mathrm{Ad}(g) X-X'\rangle_2=0$. Then by (\ref{aftermerge1}) we have
\begin{equation} \label{aftermerge2}
\langle [Y,\mathrm{Ad}(g) X],\mathrm{Ad}(g) X-X'\rangle_1=0,\quad \mbox{when}\,\,[Y, \mathrm{Ad}(g)X]\in \mathbf{V}_1.
\end{equation}

The same argument can also be applied to  $(X,X'')$. Hence for any $g\in G$ and $Y\in\mathfrak{g}$ with $[Y,\mathrm{Ad}(g) X]\in\mathbf{V}_1$, we have
{\small\begin{eqnarray*}
& & \langle [Y,\mathrm{Ad}(g) X], X'-X''\rangle_1 \nonumber\\
& & =\langle [Y,\mathrm{Ad}(g) X],\mathrm{Ad}(g) X-X''\rangle_1-\langle [Y,\mathrm{Ad}(g) X],\mathrm{Ad}(g) X-X'\rangle_1=0.
\end{eqnarray*}}
To complete the proof of the theorem, We need only  prove that the sets $\{Y|Y\in [\mathfrak{g},\mathrm{Ad}(g) X]\cap\mathbf{V}_1\}$, $\forall g\in G$,  linearly
span  $\mathbf{V}_1$. If this is not true, then there is a nonzero vector $V\in\mathbf{V}_1$, such that
$$
V\in ([\mathfrak{g},\mathrm{Ad}(g) X]\cap \mathbf{V}_1)^{\perp_{\mathrm{bi}}}=
[\mathfrak{g},\mathrm{Ad}(g) X]^{\perp_{\mathrm{bi}}}+\mathbf{V}_1^{\perp_{\mathrm{bi}}},\quad \forall g\in G.
$$
Let $\mathbf{U}=\mathbf{V}_1^{\perp_{\mathrm{bi}}}$ be the orthogonal complement of $\mathbf{V}_1$ with
respect to the bi-invariant linear metric on $\mathfrak{g}$. Then
$V$ is contained in
$$
\bigcap_{g\in G}([\mathfrak{g},\mathrm{Ad}(g) X]^{\perp_{\mathrm{bi}}}+\mathbf{V}_1^{\perp_{\mathrm{bi}}})=\bigcap_{g\in G}(\mathrm{Ad}(g) \mathfrak{c}_{\mathfrak{g}}(X)+\mathbf{U}).
$$
Notice that $V\in \mathbf{V}_1$ is not contained in $\mathbf{U}$. Thus for any $g\in G$,
$\mathrm{Ad}(g)\mathfrak{c}_{\mathfrak{g}}(X)=\mathfrak{c}_{\mathfrak{g}}(\mathrm{Ad}(g) X)$ has a nonzero intersection with the subspace $\mathbb{R}V+\mathbf{U}$, whose  dimension is $3$. This is a contradiction to Lemma \ref{key lemma}.
\end{proof}

\subsection{The set $\mathcal{K}_{F;1}$}

We keep all notations as above, and further assume that $\mathbf{V}_2$ is
a $2$-dimensional commutative subalgebra of $\mathfrak{g}$ and that $F$ is restrictively
CW-homogeneous.

\begin{lemma}\label{key-lemma-2}
Let $F=\alpha\phi ({\alpha_2}/{\alpha})$ be a CW-homogeneous left invariant non-Riemannian $(\alpha_1,\alpha_2)$-metric on a compact
connected simple Lie group $G$, with dimension decomposition $(n_1,n_2)$, where $n_1> n_2=2$.  Assume that the subspace
$\mathbf{V}_2\subset\mathfrak{g}$ is a commutative subalgebra. Then we have
\begin{description}
\item{\rm (1)}\quad The function $\phi$ is  real analytic  on $[0,1]$.
\item{\rm (2)}\quad The subset $\mathcal{K}_{F;1}\backslash \{0\} $ is a closed real
analytic subvariety of $(\mathfrak{g}\oplus\mathfrak{g}')\backslash \{0\}$.
\item{\rm (3)}\quad For any $X\in\mathfrak{g}$,  there are at most finite many $X'$, which have different $\mathbf{V}_2$-components
 with respect to the decomposition  $\mathfrak{g}=\mathbf{V}_1\oplus\mathbf{V}_2$, such that
$(X,X')\in\mathcal{K}_{F;1}$.
\end{description}
\end{lemma}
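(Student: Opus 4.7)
The plan is to prove parts (1), (2), (3) in sequence, with (1) supplying the real analytic structure invoked in (2) and (3).

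For (1), given $s_0\in[0,1]$, choose $y\in\mathfrak{g}$ with $\alpha_2(y)/\alpha(y)=s_0$. By Corollary \ref{aftermergecorollary-0} there exists $(X,X')\in\mathcal{K}_{F;1}$ with $X\ne 0$ and $X-X'=y$. For $Y\in\mathfrak{g}$ set $Z(t)=\mathrm{Ad}(\exp(tY))X-X'$, so that $u(t)=\alpha_1^2(Z(t))$ and $v(t)=\alpha_2^2(Z(t))$ are real analytic in $t$. Writing $F^2=L(\alpha_1^2,\alpha_2^2)$ with $L(u,v)=(u+v)\phi^2(\sqrt{v/(u+v)})$, the constant-length identity $F^2(Z(t))\equiv F^2(y)$ becomes
\[
\phi^2\bigl(\sqrt{\tilde s(t)}\bigr)=\frac{L(u(0),v(0))}{u(t)+v(t)},\qquad \tilde s(t)=\frac{v(t)}{u(t)+v(t)}.
\]
For generic $Y$ one can arrange $\tilde s'(0)\ne 0$, since the obstruction is a single codimension-one condition on $[Y,X]\in[X,\mathfrak{g}]$ and the latter subspace is large enough to escape this hyperplane once $y\notin\mathbf{V}_1\cup\mathbf{V}_2$. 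The implicit function theorem then makes $\phi^2$ real analytic in $\tilde s$ near $s_0^2$, and the endpoints $s_0=0,1$ are handled through the variables $s^2$ and $1-s^2$, whose smoothness is guaranteed by Theorem \ref{define-requirement}. Hence $\phi$ is real analytic on $[0,1]$.

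For (2), part (1) promotes $L$ to a real analytic function on $\{u\ge 0,v\ge 0\}\setminus\{0\}$, so $F^2$ is real analytic on $\mathfrak{g}\setminus\{0\}$. For each fixed $g\in G$ the difference $F^2(\mathrm{Ad}(g)X-X')-F^2(X-X')$ is real analytic in $(X,X')\in(\mathfrak{g}\oplus\mathfrak{g}')\setminus\{0\}$, and its simultaneous vanishing as $g$ ranges over $G$ cuts out $\mathcal{K}_F\setminus\{0\}$ as a closed real analytic subvariety. Intersecting with the analytic linear condition $X'\in\mathfrak{c}(\mathfrak{g}')$ and taking the closure yields $\mathcal{K}_{F;1}$, and Lemma \ref{lemma-7} prevents limit points with $X=0$ and $X'\ne 0$; therefore $\mathcal{K}_{F;1}\setminus\{0\}$ is a closed real analytic subvariety of $(\mathfrak{g}\oplus\mathfrak{g}')\setminus\{0\}$.

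For (3), fix $X$ and let $\mathcal{F}_X=\{X':(X,X')\in\mathcal{K}_{F;1}\}$. Theorem \ref{theorem-7} (valid since $n_2=2$) places $\mathcal{F}_X$ inside a single coset of $\mathbf{V}_2$, so finiteness reduces to that of $\mathrm{pr}_2(\mathcal{F}_X)\subset\mathbf{V}_2$. By (2) this projection is a closed real analytic subvariety of the two dimensional space $\mathbf{V}_2$, and by Lemma \ref{lemma-7} it is bounded; the main obstacle is ruling out positive dimensional components. The plan is: given a smooth arc $X'(t)\in\mathcal{F}_X$ with $V:=\dot X'(0)\in\mathbf{V}_2\setminus\{0\}$, differentiating $F^2(\mathrm{Ad}(g)X-X'(t))=c(t)^2$ at $t=0$ produces
\[
\langle\mathrm{Ad}(g)X-X'(0),V\rangle_{\mathrm{Ad}(g)X-X'(0)}=-c(0)c'(0),\qquad\forall\,g\in G,
\]
a scalar constraint on the $V$-component of the fundamental tensor along the translated $\mathrm{Ad}(G)$-orbit of $X$. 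Mirroring the argument of Theorem \ref{theorem-7} and applying the Key Lemma \ref{key lemma} to a three dimensional subspace containing $V$ together with a suitable complement of $\mathbf{V}_1$, the simplicity of $\mathfrak{g}$ forces $V=0$, a contradiction. Discreteness combined with boundedness then yields finiteness.
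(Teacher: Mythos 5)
Your proposal follows the paper's broad outline, and part (2) is essentially the paper's argument, but there are two genuine gaps. The first is in (1): the assertion that ``for generic $Y$ one can arrange $\tilde{s}'(0)\neq 0$'' is precisely the hard point and is false in general. The differential of $s(g)=\alpha_2(\mathrm{Ad}(g)X-X')/\alpha(\mathrm{Ad}(g)X-X')$ at $g=e$ is a fixed linear functional evaluated only on the proper subspace $[X,\mathfrak{g}]$, and nothing in a dimension count prevents it from vanishing identically there; it does vanish whenever $s_0$ is an extreme value of $s$ along the orbit, and the range $\mathcal{I}_{(X,X')}=\{s(g)\,|\,g\in G\}$ is in general a proper subinterval of $[0,1]$, so even $s_0\in(0,1)$ can be such an endpoint for the KVFCL you chose. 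This is why the paper first shows $s(g)$ is non-constant (via Corollary \ref{aftermergekeylemmacorollary}), then works with the first non-vanishing derivative of some finite order $k$ and the change of variable $f(t)=f(0)+\tilde{t}^{\,k}$, which yields only one-sided analyticity, and finally patches the other side at endpoints of $\mathcal{I}_{(X,X')}$ by a limit argument over a sequence of further KVFCLs (using Lemma \ref{lemma-7}). Your treatment of $s_0=0,1$ also conflates smoothness with real analyticity: Theorem \ref{define-requirement} gives smoothness in $s^2$, not analyticity.

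The second gap is in (3). Expanding your derivative identity with $V\in\mathbf{V}_2$ gives $L_2(\alpha_1^2,\alpha_2^2)\,\langle\mathrm{Ad}(g)X-X',V\rangle_2=\mathrm{const}$, which is exactly the relation the paper derives by difference quotients; but your endgame, ``the Key Lemma and simplicity force $V=0$'', does not follow. The whole difficulty is the non-constant weight $L_2$, which your sketch never addresses, and the constraint is not of the shape handled in Theorem \ref{theorem-7} (there one pairs $[Y,\mathrm{Ad}(g)X]\in\mathbf{V}_1$ against a fixed vector via $\langle\cdot,\cdot\rangle_1$), so no three-dimensional subspace argument applies directly. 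The paper instead uses Corollary \ref{aftermergekeylemmacorollary} to produce a submanifold $\mathcal{S}$ on which $\langle\cdot,V\rangle_2$ is constant, deduces that $L_2$ is constant on $\mathcal{S}$, checks that $\alpha_1^2$ and $\alpha_2^2$ vary independently there, and concludes that $L$ is linear on an open cone, hence everywhere by the analyticity established in (1); the contradiction is that $F$ would be Riemannian, not that $V=0$. (Minor further points: extracting a smooth arc from an infinite fiber requires the curve selection lemma or a stratification, which the paper sidesteps with a sequence-and-mean-value argument, and the non-constancy of $s(g)$, which you implicitly need, itself rests on the Key Lemma.)
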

\begin{proof}
(1)\quad  Given $s_0\in [0,1]$, there exists a tangent vector $v$ with $F(v)=1$, such that $\frac{\alpha_2(v)}{\alpha(v)}=s_0$. By Corollary \ref{aftermergecorollary-0}, the tangent vector $v$ can
be extended to a Killing vector field $(X,X')$ in $\mathcal{K}_{F;1}$. Then we have
\begin{equation}\label{aftermerge001}
\alpha(\mathrm{Ad}(g)X-X')\phi(\frac{\alpha_2(\mathrm{Ad}(g)X-X')}
{\alpha(\mathrm{Ad}(g)X-X')})=1,\quad \forall g\in G.
\end{equation}
The function
$$s(g)=\frac{\alpha_2(\mathrm{Ad}(g)X-X')}{\alpha(\mathrm{Ad}(g)X-X')}$$
 can not be a constant function
for $g\in G$, otherwise by (\ref{aftermerge001}) $\alpha_2(\mathrm{Ad}(g)X-X')$ would be  a constant
function for $g\in G$, and the $\alpha$-orthogonal projection of the $\mathrm{Ad}(G)$-orbit
$\mathcal{O}_X$ of $X$ in $\mathbf{V}_2$ is contained in an ellipsoid, which is a contradiction
to Corollary \ref{aftermergekeylemmacorollary}. From now on,  we will denote the set $\{s(g)|\,g\in G\}$ as
$\mathcal{I}_{(X,X')}$, which is a closed interval  $[r_0, r_1]\subset [0,1]$.

First suppose $s_0\in (r_0, r_1)$.
We now assert that there is an element $X$ within the orbit $\mathcal{O}_X$, and a vector
$Y\in\mathfrak{g}$ such that $s(X-X')=s_0$, and
the real analytic function $f(t)=s(\exp(tY))$ satisfies the conditions
\begin{equation}\label{fff}
f(0)=s_0,\quad
f'(0)=f''(0)=\cdots=f^{(k-1)}(0)=0,\quad f^{(k)}(0)>0,
\end{equation}
for some $k\in \mathbb{N}$. In fact, it is easily seen that there exists $g\in G$
such that $s(g)>s_0$.  Assume that $g$ belongs to the one-parameter
subgroup $\exp(tY)$  ¡¡generated by $Y\in\mathfrak{g}$. Suppose
conversely that
for both $f_1(t)=s(\exp(tY))$ and $f_2(t)=s(\exp(-tY))$, the first
nonzero derivative $f^{(k)}$, $k>0$, at each $t_0$ with $s(\exp(t_0 Y))=s_0$, is negative.
Then $f(t)=s(\exp(tY))$ reaches a local maximum at each $t_0$
with $s(\exp(t_0 Y)=s_0$. By the mid-value theorem for continuous
functions,  $s_0$ is the maximum of $f(t)$.
But this is a contradiction to the assumption. This prove our assertion.

Now suppose that $f$ is a function satisfying (\ref{fff}). Using a suitable real analytic change of variable $\tilde{t}=\tilde{t}(t)$ with $\tilde{t}(0)=0$, we can assume that
 $f(t)=f(0)+\tilde{t}^k$ on a small neighborhood $O'$ of $0$.
Then the equality (\ref{aftermerge001}) can  be rewritten as
\begin{equation}\label{aftermerge002}
\phi(s_0+\tilde{t}^k)=\frac{1}{\alpha(\mathrm{Ad}(\exp(tY))X-X')}.
\end{equation}
Now on the set $O'$,
the left side of (\ref{aftermerge002}) is a smooth function of $\tilde{t}$, and we have
$$\frac{d^l\phi}{d\tilde{t}^{l}}|_{\tilde{t}=0}=0, \quad \forall l,\,\mbox{with}\,\, k\not |\, l.$$
%whose derivatives with respect to $\tilde{t}$  vanishes on $O$, except those with $k$-multiple degrees.
Thus the right
side of (\ref{aftermerge002}) is a real analytic function of $\bar{t}=f(t)$ at the positive side of $f(0)$. Hence
$\phi(s)$ is a real analytic function of $s$ for  $s\geq s_0$. A similar argument can be used to show that $\phi (s)$ is a real analytic
 function of $s$ for $s\leq s_0$.

Now suppose  $s_0=r_0$ is an endpoint of $\mathcal{I}_{(X,X')}$. The above   argument shows  that $\phi(s)$ is
real analytic for  $s\geq s_0$. We now use Lemma \ref{lemma-7} to prove the real analytic
property of $\phi(s)$ for $s\leq s_0$.
If there is another Killing vector field $(X_0,X'_0)$ in
$\mathcal{K}_{F;1}$ such that an open neighborhood of $s_0$ is contained in $\mathcal{I}_{(X_0,X'_0)}$, then it is done.
Otherwise we can find a sequence $s_n$ approaching $s_0$
from below. For each $s_n$, we can find a KVFCL $(X_n,X'_n)\in\mathcal{K}_{F;1}$
 with length 1, such that
$s_n$ is contained in $\mathcal{I}_{(X_n,X'_n)}$ which lies below $s_0$. Taking a subsequence, this sequence
of KVFCLs converges to a KVFCL $(X_0,X'_0)\in\mathcal{K}_{K;1}$, such that $\mathcal{I}_{(X,X')}$ contains the negative side of the endpoint $s_0$. A similar argument can be applied to the case of $s_0=r_1$. This completes the proof of (1).

%To summarize, the smooth function $\phi$ is real analytic at both sides of each point
%of $[0,1]$, SO it is a real analytic function on $[0,1]$.

(2)\quad  Given $(X_0,X'_0)\in\mathcal{K}_{F;1}$ with $X_0\neq 0$, there is an neighborhood $O$ of $(X_0, X_0')$ in $\mathfrak{g}\oplus\mathfrak{g}'$ such that an element
$(X, X')\in O$ lies in
$\mathcal{K}_{F;1}\backslash \{0\}$ if and only if
$$
\alpha(\mathrm{Ad}(g)X-X')\phi(\frac{\alpha_2(\mathrm{Ad}(g)X-X')}
{\alpha(\mathrm{Ad}(g)X-X')})=\alpha(X-X')\phi(\frac{\alpha_2(X-X')}
{\alpha(X-X')}), \forall g\in G.
$$
These equations  are real analytic  with respect to $X$ and $X'$,  since $\phi$ is real analytic on $[0,1]$.
Thus $\mathcal{K}_{F;1}\backslash\{0\}$ is a closed real analytic subvariety of $(\mathfrak{g}\oplus\mathfrak{g}')\backslash \{0\}$.

(3)\quad We will prove this part by deducing a contradiction.
Suppose conversely that there is an infinite sequence of distinct vectors $\{X'_n\}$, such
that $(X,X'_n)\in\mathcal{K}_{F;1}$. Then Lemma \ref{lemma-7} and Theorem \ref{theorem-7} indicates that $X\neq 0$ and $X'_n$ is
a bounded sequence. By taking subsequence if necessary, we can assume that $X'_n$ converges to a vector $X'$ which is
different from any $X'_n$, and that $Y_n=\frac{(X'-X'_n)}{\alpha(X'-X'_n)}$ converges to a nonzero vector  $Y$. Denote $s(g)=\mathrm{Ad}(g)X-X'$ and
$s_n(g)=\mathrm{Ad}(g)X-X'_n$. Express the metric $F$ as $F=\sqrt{L(\alpha_1^2,\alpha_2^2)}$. Then
%the KVFCLs $(X,X')$ and $(X_n,X'_n)$ satisfy the condition
$L(\alpha_1^2(s(g)),\alpha_2^2(s(g)))$ and
$L(\alpha_1^2(s_n(g)),\alpha_2^2(s_n(g)))$ are constant functions of $g\in G$.

By Theorem \ref{theorem-7}, $s(g)-s_n(g)\in\mathbf{V}_2$, so $\alpha_1(s(g))=\alpha_1(s_n(g))$. By the differential mid-value theorem,
there is a vector $\xi_n(g)\in\mathfrak{g}$ on the line segment connecting $s(g)$ and $s_n(g)$ such that
\begin{eqnarray}\label{aftermerge003}
&&\frac{1}{\alpha(X'-X'_n)}(L(\alpha_1^2(s(g)),\alpha_2^2(s(g))-L(\alpha_1^2(s_n(g)),\alpha_2^2(s_n(g))))\nonumber\\
&&=2L_2(\alpha_1^2(s(g))),\alpha_2^2(\xi_n(g)))\langle \xi_n(g),Y_n\rangle_2.
\end{eqnarray}
Now the right side of (\ref{aftermerge003}) converges to
\begin{equation}\label{aftermerge004}
2L_2(\alpha_1^2(s(g))),\alpha_2^2(s(g)))\langle s(g),Y\rangle_2
\end{equation}
for each $g\in G$ as $n\rightarrow \infty$. Thus
(\ref{aftermerge004}) is a constant function of $g\in G$.
Therefore
$$
L_2(\alpha_1^2(\mathrm{Ad}(g)X-X'),\alpha_2^2(\mathrm{Ad}(g)X-X'))
\langle \mathrm{Ad}(g)X-X',Y\rangle_2
$$
is a constant function of $g\in G$.

By Corollary \ref{aftermergekeylemmacorollary},  we can replace $X$
with one of its conjugations, such that the $\alpha$-orthogonal projection from
the $\mathrm{Ad}(G)$-orbit $\mathcal{O}_X$ to $\mathbf{V}_2$ has surjective
tangent map at $X$. Let $\mathcal{P}\subset \mathbf{V}_2$ be a hyperplane passing $\mathrm{pr}_2(X-X')$ $\alpha$-orthogonal to $Y$. The set
$$
\{g\in G| \mathrm{pr}_2(\mathrm{Ad}(g)X-X')\subset \mathcal{P}\}
$$
contains a smooth submanifold $\mathcal{S}$ of $G$ around $e$ which is mapped by $\mathrm{pr}_2$
onto a neighborhood of $\mathrm{pr}_2(X-X')$ in $\mathcal{P}$. Restricted to $g\in
\mathcal{S}$, $\langle \mathrm{Ad}(g)X-X',Y\rangle_2$ is a constant function. Then
$$
L_2(\alpha_1^2(\mathrm{Ad}(g)X-X'),\alpha_2^2(\mathrm{Ad}(g)X-X'))
$$
is a constant function of $g\in \mathcal{S}$. We assert that $\alpha_1^2(\mathrm{Ad}(g)X-X')$ and
$\alpha_2^2(\mathrm{Ad}(g)X-X')$ are  linearly independent functions of $g\in \mathcal{S}$. In fact, otherwise
 both are constant functions of $g\in\mathcal{S}$.
If $\alpha_2^2(\mathrm{Ad}(g)X-X')=\mbox{const}$, $\forall g\in \mathcal{S}$,
then the projection $\mathrm{pr}_2$ maps $\mathcal{S}$ to
an ellipsoid, or a point, in $\mathbf{V}_2$,
which is a contradiction to the fact that it maps $\mathcal{S}$ onto an open set of a flat hyperplane in $\mathbf{V}_2$.
This proves the assertion.
Now $L_2$ is positively homogeneous of degree $0$, so it is a constant function on a nonempty
open cone, where $L$ is a linear function. Since $\phi$ is analytic, $L$ is also analytic.
 Therefore $L(u,v)$ must be the same linear function on the whole quarter plane. Thus
 $F$ is a Riemannian metric, which is a contradiction.
\end{proof}

There are two natural projections from $\mathcal{K}_{1,F}\backslash \{0\}$ to $\mathfrak{g}\backslash \{0\}$, namely,
$$
\pi_1(X,X')=X-X', \quad \pi_2(X,X')=X.
$$
Note that $\pi_1$ is just the map from a Killing vector field to its value at $e$.
Corollary \ref{aftermergecorollary-0} indicates that $\pi_1$ is surjective, and (3) of Lemma \ref{key-lemma-2} indicates that $\pi_2$ is a finite covering map. By the locally finite stratification given by Whitney \cite{WH}, there is an open subset $\mathcal{V}'$ of $\mathcal{K}_{F;1}\backslash \{0\}$, which is a smooth manifold with the same dimension as $\mathcal{K}_{F;1}\backslash \{0\}$, such that the restriction of $\pi_2$ on $\mathcal{V}'$ is a finite map. So the manifold $\mathcal{V}'$ has the same dimension as $\mathfrak{g}$, and there is point $p$  in $\mathcal{V}'$ such that $\pi_2$ is regular on a neighborhood of $p$. Thus the real analytic subvariety
$\mathcal{K}_{F;1}\backslash \{0\}$ has the same dimension as $\mathfrak{g}$ and the image
$\pi_2(\mathcal{K}_{F;1}\backslash \{0\})$ contains a nonempty open subset $\mathcal{U}$ in $\mathfrak{g}\backslash \{0\}$. The $\mathrm{Ad}(G)$-actions
on the first factor preserve $\mathcal{K}_{F;1}\backslash \{0\}$.
So we can assume that $\mathcal{U}$ is
$\mathrm{Ad}(G)$-invariant.

Let $\mathfrak{t}$ be a Cartan subalgebra of $\mathfrak{g}$. Then
$\mathcal{U}'=\mathcal{U}\cap(\mathfrak{t}\backslash \{0\})$ is a nonempty open subset of $\mathfrak{t}$.
For any nonzero $X$ in $\mathcal{U}'$, there is a $X'\in\mathfrak{c}(\mathfrak{g}')$, such that
$(X,X')\in \mathcal{K}_{F;1}$. Note that  there maybe many choices for $X'$, but the projections
$\mathrm{pr}_1 X'$ are all equal.

From (\ref{aftermerge2}), we have seen that the map: $l(X)=\mathrm{pr}_1 X'$, $X\in\mathcal{U}'$, satisfies
the following condition
\begin{equation}\label{aftermerge010}
\langle [Y,\mathrm{Ad}(g)X],\mathrm{Ad}(g)X-l(X)\rangle_1=0,\mbox{ whenever }[Y,\mathrm{Ad}(g)X]\in\mathbf{V}_1.
\end{equation}

Now we will show that $l(X)$ can be extended to a linear map on $\mathfrak{t}$ with
(\ref{aftermerge010}) satisfied.

Let $\{X_1,\ldots,X_m\}$ be  a basis of $\mathfrak{t}$  such that any $X_i$ is a  regular vector in $\mathcal{U}'$. By (\ref{aftermerge010}), for each $X_i$, there is $X''_i=l(X_i)$ such that
$
\langle [Y,\mathrm{Ad}(g)X_i],\mathrm{Ad}(g)X_i-X''_i\rangle_1=0,
$
whenever $[Y,\mathrm{Ad}(g)X_i]\in\mathbf{V}_1$.
For $X=\sum_{i=1}^m c_i X_i$, let $X''=\sum_{i=1}^m c_i X''_i$.
Since $[\mathfrak{g},\mathrm{Ad}(g)X]\subset [\mathfrak{g},\mathrm{Ad}(g)X_i]$, $\forall i$,
we have
\begin{equation}\label{aftermerge008}
\langle [Y,\mathrm{Ad}(g)X]\cap\mathbf{V}_1,\mathrm{Ad}(g)X_i-X''_i\rangle_1=0.
\end{equation}
Take the linear combination of (\ref{aftermerge008}) for all $i$,
we get
\begin{equation}\label{aftermerge009}
\langle [Y,\mathrm{Ad}(g)X]\cap\mathbf{V}_1,\mathrm{Ad}(g)X-X''\rangle_1=0.
\end{equation}
This defines a linear map from $X$ to $X''$, satisfying (\ref{aftermerge010}).
From the proof of Theorem \ref{theorem-7}, we easily see that this linear map
coincide with $\mathrm{pr}_1(X')$ when $(X,X')\in\mathcal{K}_{F;1}$.

Now  for any $X_1, X_2\in \mathrm{pr}_1 \mathcal{K}_{F;1}\cap \mathfrak{t}$ in the same orbit of Weyl group actions,
there exists  $X'$s such that $(X_1,X'),(X_2,X')\in \mathcal{K}_{F;1}$. Thus $l(X_1)=l(X_2)$, that is,
 the linear map $l$ on $\mathfrak{t}$ is invariant under the action of the Weyl group. Hence $l=0$.

Since the above assertion is valid for any Cartan subalgebra $\mathfrak{t}$,  we have
\begin{lemma}\label{aftermergelemma-008}
 For any $(X,X')\in\mathcal{K}_{F;1}$,
we have $X'\in \mathbf{V}_2\cap\mathfrak{c}(\mathfrak{g}')$.
\end{lemma}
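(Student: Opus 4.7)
The plan is to combine the structural information accumulated in the preceding paragraphs. By construction of $\mathcal{K}_{F;1}$, any $(X,X')\in\mathcal{K}_{F;1}$ already satisfies $X'\in\mathfrak{c}(\mathfrak{g}')$, so it suffices to prove $\mathrm{pr}_1 X'=0$, where $\mathrm{pr}_1$ denotes the $\alpha$-orthogonal projection onto $\mathbf{V}_1$; then automatically $X'\in\mathbf{V}_2\cap\mathfrak{c}(\mathfrak{g}')$.

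First I would fix a Cartan subalgebra $\mathfrak{t}$ of $\mathfrak{g}$ containing $X$, which is possible since every element of a compact Lie algebra lies in some Cartan. The linear map $l\colon\mathfrak{t}\to\mathbf{V}_1$ constructed just before the statement then applies, and part (3) of Lemma \ref{key-lemma-2} together with Theorem \ref{theorem-7} guarantees that $\mathrm{pr}_1 X'$ is uniquely determined by $X$ whenever $(X,X')\in\mathcal{K}_{F;1}$, so that $l(X)=\mathrm{pr}_1 X'$ on the whole of $\mathfrak{t}$ (not merely on the open regular set $\mathcal{U}'$).

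The heart of the argument is the Weyl invariance of $l$. For $X_1,X_2\in\mathfrak{t}$ in the same orbit of the Weyl group, represented by $X_2=\mathrm{Ad}(n)X_1$ for some $n\in N_G(T)$, I would argue that the conjugation action sends a KVFCL $(X_1,X')$ to a KVFCL extending $X_2$ whose $\mathrm{pr}_1$-component of the second factor coincides with $\mathrm{pr}_1 X'$. Once Weyl invariance is in place, I would invoke that $\mathfrak{g}$ is simple of rank at least two (implicit in the standing hypothesis of Case 2), so the Weyl group $W$ acts on $\mathfrak{t}$ as the reflection representation, whose subspace of $W$-fixed vectors is $\{0\}$. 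Any linear map on $\mathfrak{t}$ that is constant on $W$-orbits vanishes on the linear span of $\{X-wX:X\in\mathfrak{t},\,w\in W\}$, which fills $\mathfrak{t}$, so $l\equiv 0$. Hence $\mathrm{pr}_1 X'=l(X)=0$, and since the Cartan $\mathfrak{t}$ containing $X$ was arbitrary, the conclusion holds for every $(X,X')\in\mathcal{K}_{F;1}$.

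The main obstacle I expect is precisely the Weyl invariance step. It is delicate because $N_G(T)$ in general does not preserve the decomposition $\mathfrak{g}=\mathbf{V}_1\oplus\mathbf{V}_2$; only $G'\subseteq G$ does. One therefore cannot simply assert that $\mathrm{Ad}(n)$ sends a valid second factor to a valid second factor. The correct approach is to exploit the uniqueness of $\mathrm{pr}_1 X'$ given $X$ (Theorem \ref{theorem-7} and Lemma \ref{key-lemma-2}(3)): the conjugated KVFCL extending $X_2$ has \emph{some} second factor, and any two valid second factors for $X_2$ must agree under $\mathrm{pr}_1$, so $\mathrm{pr}_1$ of the conjugated vector coincides with $l(X_2)$, which by the $\mathrm{Ad}(n)$-equivariance of $\mathrm{pr}_1$ applied to $X'\in\mathfrak{t}$ (up to a correction lying in $\mathbf{V}_2$) matches $l(X_1)$. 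Once that is cleaned up, the representation-theoretic conclusion is routine.
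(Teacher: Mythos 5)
Your overall skeleton is exactly the paper's: reduce to showing $\mathrm{pr}_1X'=0$, identify $\mathrm{pr}_1X'$ with the value of the linear map $l$ on a Cartan subalgebra $\mathfrak{t}$ containing $X$ (uniqueness of $\mathrm{pr}_1X'$ for a given $X$ is Theorem \ref{theorem-7}; the identification of $l(X)$ with $\mathrm{pr}_1X'$ beyond the regular set is part of the construction preceding the lemma, which you are entitled to cite), then show $l$ is constant on Weyl orbits and conclude $l\equiv 0$ because the reflection representation of the Weyl group of a simple algebra has no nonzero fixed vectors. That final representation-theoretic step and the reduction are fine.

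The genuine problem is your justification of the Weyl-invariance step, which you yourself identify as the heart of the matter. As written, your ``correct approach'' tacitly conjugates the KVFCL by the inner automorphism $L_nR_{n^{-1}}$, so that the second factor becomes $\mathrm{Ad}(n)X'$, and then tries to compare $\mathrm{pr}_1(\mathrm{Ad}(n)X')$ with $\mathrm{pr}_1X'$ via ``$\mathrm{Ad}(n)$-equivariance of $\mathrm{pr}_1$'' and the assumption $X'\in\mathfrak{t}$. This fails on three counts: $R_{n^{-1}}$ is an isometry only for $n\in G'$, so the conjugated field need not be a Killing field of $F$ at all, and $\mathrm{Ad}(n)X'$ need not even lie in $\mathfrak{g}'$; the projection $\mathrm{pr}_1$ commutes with $\mathrm{Ad}(g)$ only for $g\in G'$, so there is no equivariance to invoke; and nothing established so far places $X'$ in $\mathfrak{t}$ (or in $\mathbf{V}_2$ --- that is precisely what the lemma is to prove). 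The resolution in the paper is much simpler and avoids the issue entirely: conjugate by the left translation $L_n$ alone, which is always an isometry and acts on $\mathrm{Lie}(I_0(G,F))=\mathfrak{g}\oplus\mathfrak{g}'$ by $(X,X')\mapsto(\mathrm{Ad}(n)X,\,X')$, leaving the second factor untouched and preserving $\mathcal{K}_{F;1}$. Thus if $(X_1,X')\in\mathcal{K}_{F;1}$ and $X_2=\mathrm{Ad}(n)X_1\in\mathfrak{t}$, then $(X_2,X')\in\mathcal{K}_{F;1}$ with the \emph{same} $X'$, so $l(X_2)=\mathrm{pr}_1X'=l(X_1)$ with no equivariance needed. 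With this repair your argument coincides with the paper's; one further small correction: the uniqueness of $\mathrm{pr}_1X'$ comes from Theorem \ref{theorem-7} (via the Key Lemma), not from Lemma \ref{key-lemma-2}(3), which only gives finiteness of the possible second factors.
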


\subsection{Proof of Theorem \ref{main}}
Keep all notations as in the last subsection. We now show that
 the properties of $\mathcal{K}_{F;1}$ can be used to determine the metric $\alpha$.
Then we give a proof of  Theorem \ref{main}.

Given nonzero $X\in\mathcal{U}$, we can find a pair $(X,X')\in\mathcal{K}_{F;1}$. We have
just proven that $X'\in\mathbf{V}_2\cap\mathfrak{c}(\mathfrak{g}')$. Applying the
equality (\ref{aftermerge1}) to $g=e$, one easily sees that there exists $X''\in\mathbf{V}_2$,  such that
\begin{equation}\label{aftermerge013}
\langle[Y,X], X-X''\rangle=0,\quad \forall Y\in\mathfrak{g}.
\end{equation}
In fact, one just needs to take
$$
X''=\mathrm{pr}_2((-\frac{L_2(u,v)}{L_1(u,v)}+1)X)+\frac{L_2(u,v)}{L_1(u,v)}X',
$$
where $u=\alpha_1^2(X-X')$ and $v=\alpha_2^2(X-X')$.

The next lemma indicates that (\ref{aftermerge013}) is actually true for all $X\in\mathfrak{g}$. The proof is similar to that of Lemma \ref{aftermergelemma-008}.
\begin{lemma}\label{aftermergelemma010}
Keep all the notations as above. For any $X\in\mathfrak{g}$,
there exists $X''\in\mathbf{V}_2$ such that
\begin{equation}\label{aftermerge015}
\langle [Y,X],X-X''\rangle=0,\quad \forall Y\in\mathfrak{g}.
\end{equation}
\end{lemma}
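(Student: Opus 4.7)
The plan is to extend the existence of $X''$ from $X\in\mathcal{U}$ to arbitrary $X\in\mathfrak{g}$ by the linear-extension trick used in the proof of Lemma \ref{aftermergelemma-008}. Given $X\in\mathfrak{g}$, I would first choose a Cartan subalgebra $\mathfrak{t}$ of $\mathfrak{g}$ containing $X$; such a $\mathfrak{t}$ exists because in a compact Lie algebra every element lies in some maximal torus. Since $\mathcal{U}$ is a nonempty open $\mathrm{Ad}(G)$-invariant subset of $\mathfrak{g}$, the intersection $\mathcal{U}\cap\mathfrak{t}$ is a nonempty open subset of $\mathfrak{t}$, and the regular elements of $\mathfrak{g}$ lying in $\mathfrak{t}$ are dense there. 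I can therefore pick a basis $X_1,\ldots,X_m$ of $\mathfrak{t}$ with each $X_i$ both regular and contained in $\mathcal{U}$.

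Since $X_i\in\mathcal{U}$, the identity (\ref{aftermerge013}) already established produces an $X''_i\in\mathbf{V}_2$ with $\langle[Y,X_i],X_i-X''_i\rangle=0$ for every $Y\in\mathfrak{g}$. Writing $X=\sum_{i=1}^m c_iX_i$, I would set
\[
X''=\sum_{i=1}^m c_iX''_i\;\in\;\mathbf{V}_2
\]
and verify that this $X''$ satisfies (\ref{aftermerge015}).

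The verification hinges on the regularity of each $X_i$. For a regular $X_i\in\mathfrak{t}$, the image $[\mathfrak{g},X_i]$ equals the bi-invariant orthogonal complement of $\mathfrak{t}$, namely the direct sum of all root spaces. Consequently $[\mathfrak{g},X]\subseteq[\mathfrak{g},X_i]$ for every $X\in\mathfrak{t}$, and taking $\alpha$-orthogonal complements reverses the inclusion to give $[\mathfrak{g},X_i]^{\perp_{\alpha}}\subseteq[\mathfrak{g},X]^{\perp_{\alpha}}$. The defining equation of $X''_i$ says precisely that $X_i-X''_i$ lies in $[\mathfrak{g},X_i]^{\perp_{\alpha}}$, so each $X_i-X''_i$ lies in the common subspace $[\mathfrak{g},X]^{\perp_{\alpha}}$. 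Since this subspace is closed under linear combinations,
\[
X-X''=\sum_{i=1}^m c_i(X_i-X''_i)\;\in\;[\mathfrak{g},X]^{\perp_{\alpha}},
\]
which is exactly the content of (\ref{aftermerge015}).

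The main subtle point is the verification above: expanding $\langle[Y,X],X-X''\rangle$ bilinearly in the coefficients $c_i$ produces cross terms $\langle[Y,X_i],X_j-X''_j\rangle$ with $i\neq j$ that have no evident reason to vanish, so the argument cannot close by pure bilinearity. What rescues it is the reformulation as membership in a single $\alpha$-orthogonal complement, a linear condition on $X-X''$ that respects the linear extension. The essential ingredient making this reformulation go through is the regularity of the $X_i$'s, which forces $[\mathfrak{g},X]\subseteq[\mathfrak{g},X_i]$; without a basis of regular elements available in $\mathcal{U}\cap\mathfrak{t}$, the argument would break down.
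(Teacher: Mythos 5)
Your proposal is correct and follows essentially the same route as the paper: choose a Cartan subalgebra $\mathfrak{t}$ containing $X$, pick a basis of regular vectors $X_i\in\mathcal{U}\cap\mathfrak{t}$, apply (\ref{aftermerge013}) to each $X_i$, and extend linearly, using $[\mathfrak{g},X]\subset[\mathfrak{g},X_i]$ to conclude. Your reformulation of the final step as membership of $X-X''$ in the $\alpha$-orthogonal complement of $[\mathfrak{g},X]$ is precisely the content of the paper's ``it is easily seen'' remark, just spelled out in more detail.
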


\begin{proof}
Given $X\in\mathfrak{g}$, let $\mathfrak{t}$ be a Cartan subalgebra containing $X$,
and $\{X_1,\ldots,X_m\}$ a basis of $\mathfrak{t}$, such that each $X_i$ is a regular
vector in $\mathcal{U}\cap\mathfrak{t}$. We have proven that for each $X_i$, there is an  $X''_i\in\mathbf{V}_2$
such that the pair $X_i, X_i''$ satisfy (\ref{aftermerge015}). Now given an arbitrary  $X=\sum_{i=1}^m c_iX_i$,  set
$X''=\sum_{i=1}^mc_i X''_i$. Since for each $i$,  $[\mathfrak{g},X]\subset [\mathfrak{g},X_i]$,
it is easily seen that $X, X''$ satisfy (\ref{aftermerge015}).
\end{proof}

Next We shall show that, in the above lemma,  there is a linear map from $\mathfrak{g}$ to $\mathbf{V}_2$ such that  $X''$ is the image of  $X$ under this map.

Let $l_0:\mathfrak{g}\rightarrow\mathfrak{g}$ be the linear isomorphism defined by
$\langle X,Y\rangle=\langle X,l_0(Y) \rangle_{\mathrm{bi}}$.
%Then Lemma \ref{key lemma} indicates that we can find the set $S'$ of all vectors $X$ %such that $\mathfrak{c}_{\mathfrak{g}}(X)\cap
%l(\mathbf{V}_2)=\{0\}$ is a nonempty open subset of $\mathfrak{g}$.
%It is easily checked for $X\in \mathcal{S'}$, $X''=l'(X)$ is uniquely determined.
%
For our purpose it will be important to  choose a suitable basis of $\mathbf{U}=l_0(\mathbf{V}_2)$. We need the following lemma.
\begin{lemma}\label{lemma-9}
Let $\mathfrak{g}$ be a compact simple Lie algebra, and $\mathbf{U}$ be a
$2$-dimensional subspace of $\mathfrak{g}$. Then there is a basis $\{U_1,U_2\}$
of $\mathbf{U}$,
%and vectors $Y\in\mathfrak{c}_{\mathfrak{g}}(U_2)$
such that there are no containing relations between the centralizers of $U_1$ and $U_2$,
i.e., there are vectors $Y_1\in\mathfrak{c}_{\mathfrak{g}}(U_1)$ and
$Y_2\in\mathfrak{c}_{\mathfrak{g}}(U_2)$, such that $[U_1,Y_2]\neq 0$ and
$[U_2,Y_1]\neq 0$.
\end{lemma}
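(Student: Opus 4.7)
The plan is to split into two cases according to whether $\mathbf{U}$ is contained in some Cartan subalgebra of $\mathfrak{g}$. Recall that the condition in the lemma is equivalent to $\mathfrak{c}_\mathfrak{g}(U_1)\not\subset\mathfrak{c}_\mathfrak{g}(U_2)$ and $\mathfrak{c}_\mathfrak{g}(U_2)\not\subset\mathfrak{c}_\mathfrak{g}(U_1)$, so the task is to select a basis of $\mathbf{U}$ for which the two centralizers are mutually non-included.

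First, suppose $\mathbf{U}$ is not contained in any Cartan subalgebra. Pick a regular element $U_1\in\mathbf{U}$ (possible since the regular locus is open and dense in $\mathfrak{g}$, hence meets $\mathbf{U}$ in an open dense subset); then $\mathfrak{c}_\mathfrak{g}(U_1)$ is the Cartan subalgebra $\mathfrak{t}_1$ containing $U_1$. By hypothesis $\mathbf{U}\not\subset\mathfrak{t}_1$, so the set of regular vectors in $\mathbf{U}\setminus\mathfrak{t}_1$ is nonempty; pick $U_2$ in it, so $\mathfrak{c}_\mathfrak{g}(U_2)=\mathfrak{t}_2$ is another Cartan subalgebra. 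Since $U_2\in\mathfrak{t}_2\setminus\mathfrak{t}_1$, the two Cartans differ; but they have the same dimension, so neither is contained in the other, and any $Y_1\in\mathfrak{t}_1\setminus\mathfrak{t}_2$ and $Y_2\in\mathfrak{t}_2\setminus\mathfrak{t}_1$ do the job.

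The harder case is $\mathbf{U}\subset\mathfrak{t}$ for some Cartan subalgebra $\mathfrak{t}$. I would invoke the root-space decomposition as in Lemma 5.4, writing $\mathfrak{g}=\mathfrak{t}+\sum_{\lambda\in\Delta^+}\mathfrak{g}_\lambda$ with $\mathfrak{g}_\lambda=\mathfrak{g}\cap(\mathfrak{g}^\mathbb{C}_\lambda+\mathfrak{g}^\mathbb{C}_{-\lambda})$, so that for $H\in\mathfrak{t}$ and nonzero $Y\in\mathfrak{g}_\lambda$ one has $[H,Y]=0$ if and only if $\lambda(H)=0$. The key claim is that there exist two positive roots $\alpha,\beta$ whose restrictions to $\mathbf{U}$ are linearly independent. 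Indeed, if all nonzero restrictions $\lambda|_{\mathbf{U}}$ were proportional to a single linear functional $\ell$ on $\mathbf{U}$, then every vector in $\ker\ell\subset\mathbf{U}$ (a nonzero subspace since $\dim\mathbf{U}=2$) would be annihilated by every root and hence lie in the center of $\mathfrak{g}$, contradicting simplicity. Once such $\alpha,\beta$ are in hand, choose nonzero $U_1\in\ker(\alpha|_{\mathbf{U}})$ and $U_2\in\ker(\beta|_{\mathbf{U}})$; these span $\mathbf{U}$ and satisfy $\alpha(U_2)\neq 0$ and $\beta(U_1)\neq 0$. Taking any nonzero $Y_1\in\mathfrak{g}_\alpha\subset\mathfrak{c}_\mathfrak{g}(U_1)$ and $Y_2\in\mathfrak{g}_\beta\subset\mathfrak{c}_\mathfrak{g}(U_2)$ then gives $[U_2,Y_1]\neq 0$ and $[U_1,Y_2]\neq 0$.

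The main obstacle is the second case, and it boils down to the observation that no nonzero element of $\mathbf{U}$ can be killed by every root; this is precisely where simplicity of $\mathfrak{g}$ enters. Everything else is a routine manipulation with regular elements and root spaces.
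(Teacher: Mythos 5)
Your second case ($\mathbf{U}$ contained in a Cartan subalgebra) is essentially the paper's own argument: the paper chooses two roots $\eta_1,\eta_2$ that do not vanish on $\mathbf{U}$ and whose Weyl walls meet $\mathbf{U}$ in different lines, which is exactly your ``two positive roots with linearly independent restrictions to $\mathbf{U}$'', and the contradiction with the triviality of the center is the same; the construction of $U_1,U_2,Y_1,Y_2$ from the root spaces is identical. That part is correct.

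The gap is in your first case. The justification ``the regular locus is open and dense in $\mathfrak{g}$, hence meets $\mathbf{U}$ in an open dense subset'' is a non sequitur: an open dense subset of $\mathfrak{g}$ can miss a proper linear subspace entirely, so density in $\mathfrak{g}$ says nothing about $\mathbf{U}$. Worse, the statement you actually need is false in general: a $2$-dimensional subspace of a compact simple Lie algebra may contain no regular element at all. For example, in $\mathfrak{g}=\mathfrak{so}(5)$ take $\mathbf{U}=\mathrm{span}\{E_{12}-E_{21},\,E_{13}-E_{31}\}$; every nonzero element of $\mathbf{U}$ equals $e_1\wedge(ae_2+be_3)$ and is conjugate to a multiple of $E_{12}-E_{21}$, whose centralizer $\mathfrak{so}(2)\oplus\mathfrak{so}(3)$ has dimension $4>2=\mathrm{rank}\,\mathfrak{so}(5)$, so all of $\mathbf{U}$ is singular. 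Moreover this $\mathbf{U}$ is non-commutative, hence not contained in any Cartan subalgebra, so it falls precisely into your Case 1, where your argument breaks down. The repair is immediate and is what the paper means by ``obvious'' there: if $\mathbf{U}$ lies in no Cartan subalgebra then it is non-commutative, so pick any basis with $[U_1,U_2]\neq 0$ and take $Y_1=U_1\in\mathfrak{c}_{\mathfrak{g}}(U_1)$ and $Y_2=U_2\in\mathfrak{c}_{\mathfrak{g}}(U_2)$; then $[U_2,Y_1]=[U_2,U_1]\neq 0$ and $[U_1,Y_2]\neq 0$. With that one-line substitution for Case 1, your proof is complete and coincides with the paper's.
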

\begin{proof} If $\mathbf{U}$ is not commutative, then the assertion is obvious. If $\mathbf{U}$ is commutative,
then there is a Cartan subalgebra $\mathfrak{t}$ containing
$\mathbf{U}$. Denote by $\Delta_1$ (resp. $\Delta _2$) the root system of $\mathfrak{g}^\mathbb{C}$ (resp. $\mathfrak{c}_{\mathfrak{g}^\mathbb{C}}(\mathbf{U})$) with respect to $\mathfrak{t}^\mathbb{C}$.
Then for any  $\eta\in \Delta_1\backslash \Delta_2$, we have
    $\dim \mathcal{P}_\eta\cap \mathbf{U}= 1$, where $\mathcal{P}_\eta\subset\mathfrak{t}$ is the Weyl wall of $\eta$. We assert that there must be two roots
    $\eta_1, \eta_2\in\Delta_1\backslash \Delta_2$ such that $ \mathcal{P}_{\eta_1}\cap \mathbf{U}\ne \mathcal{P}_{\eta_2}\cap \mathbf{U}$. In fact, otherwise there exists $U\neq 0$ which spans  this common intersection. Then we have
$U\in\mathfrak{c}(\mathfrak{g})$, which is a contradiction. This proves our assertion.
Now let $\eta_1$ and $\eta_2$ be  two roots such that
$\mathcal{P}_{\eta_1}\cap\mathbf{U}\neq \mathcal{P}_{\eta_2}\cap\mathbf{U}$.  Then there exist nonzero vectors $U_i$, $i=1,2$,  in
$\mathcal{P}_{\eta_i}\cap\mathbf{U}$ and nonzero vectors
$Y_i$, $i=1,2$, in $\mathfrak{g}_{
\eta_i}=\mathfrak{g}\cap(\mathfrak{g}^\mathbb{C}_{\eta_i}+
\mathfrak{g}^\mathbb{C}_{-\eta_i})$. Then we have $Y_1\in\mathfrak{c}_{\mathfrak{g}}(U_1)$,
$Y_2\in\mathfrak{c}_{\mathfrak{g}}(U_2)$ and  $[U_1,Y_2]\neq 0$,
$[U_2,Y_1]\neq 0$.
\end{proof}

Let $\{U_1,U_2\}$ be a basis of $l_0(\mathbf{V}_2)$ as in the above lemma, with the
corresponding vectors $Y_1$ and $Y_2$. For any $X\in\mathfrak{g}$,
let $X''$ be a vector satisfying the condition of Lemma \ref{aftermergelemma010}. Denote $l_0 (X'')=c_1 U_1+c_2 U_2$.
Then by  (\ref{aftermerge015}), we have
\begin{eqnarray*}
\langle X,[X,Y]\rangle &=& \langle l_0 (X''),[X,Y]\rangle_{\mathrm{bi}}  \\
&=& \langle c_1 [Y,U_1]+c_2 [Y,U_2],X\rangle_{\mathrm{bi}},
\end{eqnarray*}
for all $Y\in\mathfrak{g}$.

The function $f_1(X)=\langle X,[X,Y_2]\rangle=c_1
\langle [Y_2,U_1],X\rangle_{\mathrm{bi}}$
vanishes at
$$X\in\{Z| \langle [Y_2,U_1],Z\rangle_{\mathrm{bi}}=0\},$$
the later being  a linear subspace of $\mathfrak{g}$ with  co-dimension $1$. Thus $f_1(X)$ can
be decomposed into the product of the linear
function
$\langle [Y_2,U_1],X\rangle_{\mathrm{bi}}$, and another linear function
$\tilde{c}_1(X)$ which is  equal to $c_1$ at any
$X\in\mathfrak{g}\backslash\{Z|\langle [Y_2,U_1],Z\rangle_{\mathrm{bi}}=0\}$.
Replacing $Y_1$ with $Y_2$, we can find a linear function $\tilde{c}_2(X)$ which equals
$c_2$ at $X\in \mathfrak{g}\backslash\{Z|\langle [Y_1,U_2],Z\rangle_{\mathrm{bi}}\}$.

The above argument shows that for any
$X$ in the  open dense subset
$$
\mathfrak{g}\backslash\{Z\in\mathfrak{g}|[Y_2,U_1],Z\rangle_{\mathrm{bi}}=0 \mbox{ or }[Y_1,U_2],Z\rangle_{\mathrm{bi}}=0\},
$$
 the linear map $l'(X)=l_0^{-1}(\tilde{c}_1(X)U_1+\tilde{c}_2(X)U_2)$
satisfies  the equation
\begin{equation}\label{aftermerge-0002}
\langle[Y,X],X-l'(X)\rangle=0,\quad \forall Y\in\mathfrak{g}.
\end{equation}
Therefore (\ref{aftermerge-0002}) is valid for all $X\in\mathfrak{g}$, and
 we get the following refinement of Lemma \ref{aftermergelemma010}.

\begin{lemma}\label{aftermerge011}
There is a linear map $l':\mathfrak{g}\rightarrow\mathbf{V}_2$, such that
$$\langle [Y,X],X\rangle=\langle [Y,X],l'(X)\rangle,\quad  \forall X,Y\in\mathfrak{g}.$$
\end{lemma}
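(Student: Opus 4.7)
My plan is to assemble the linear map $l'$ from the construction already laid out in the text preceding the statement. By Lemma \ref{aftermergelemma010}, for every $X\in\mathfrak{g}$ there exists some (a priori not unique) $X''\in\mathbf{V}_2$ satisfying $\langle[Y,X],X-X''\rangle=0$ for all $Y\in\mathfrak{g}$. The goal is to exhibit a choice that is linear in $X$.

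First I would fix a basis $\{U_1,U_2\}$ of $\mathbf{U}=l_0(\mathbf{V}_2)$ together with the vectors $Y_1,Y_2$ guaranteed by Lemma \ref{lemma-9}, so that $Y_i\in\mathfrak{c}_{\mathfrak{g}}(U_i)$ for $i=1,2$ while $[Y_2,U_1]\neq 0$ and $[Y_1,U_2]\neq 0$. Writing $l_0(X'')=c_1U_1+c_2U_2$ and testing the defining identity with $Y=Y_2$ kills the $c_2$-contribution (because $[Y_2,U_2]=0$), yielding
$$\langle[Y_2,X],X\rangle=c_1\langle[Y_2,U_1],X\rangle_{\mathrm{bi}}.$$
A symmetric calculation with $Y=Y_1$ gives $\langle[Y_1,X],X\rangle=c_2\langle[Y_1,U_2],X\rangle_{\mathrm{bi}}$.

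Next I would argue that $c_1$ and $c_2$ extend to \emph{linear} functionals of $X$. For each $X$ with $\langle[Y_2,U_1],X\rangle_{\mathrm{bi}}=0$, some admissible $c_1$ must exist, forcing the quadratic form $X\mapsto\langle[Y_2,X],X\rangle$ to vanish on that codimension-one hyperplane; so this quadratic is divisible by the linear form $\langle[Y_2,U_1],\cdot\rangle_{\mathrm{bi}}$ in the polynomial ring on $\mathfrak{g}$, and the quotient is a globally defined linear functional $\tilde c_1(X)$ agreeing with $c_1$ on the open dense complement of the hyperplane. The same reasoning produces $\tilde c_2(X)$. I would then set $l'(X):=l_0^{-1}\!\left(\tilde c_1(X)U_1+\tilde c_2(X)U_2\right)$, which is linear in $X$ with values in $\mathbf{V}_2$.

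Finally I would verify the required identity. On the open dense set where neither $\langle[Y_2,U_1],X\rangle_{\mathrm{bi}}$ nor $\langle[Y_1,U_2],X\rangle_{\mathrm{bi}}$ vanishes, the coefficients $\tilde c_i(X)$ coincide with the $c_i$ of some valid $X''$ from Lemma \ref{aftermergelemma010}, so $l'(X)$ is a legitimate choice of $X''$ and $\langle[Y,X],X\rangle=\langle[Y,X],l'(X)\rangle$ holds for every $Y$. Both sides are polynomial in $X$ for fixed $Y$, so the identity extends to all of $\mathfrak{g}$ by continuity. The principal obstacle is the linearity of $c_1,c_2$; this is exactly why Lemma \ref{lemma-9} is needed, since without the non-containment of centralizers the brackets $[Y_j,U_i]$ ($i\neq j$) could vanish and the quadratic-to-linear reduction that separates the two coordinates would collapse.
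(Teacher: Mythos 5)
Your proposal is correct and follows essentially the same route as the paper: write $l_0(X'')=c_1U_1+c_2U_2$ with the basis and vectors $Y_1,Y_2$ from Lemma \ref{lemma-9}, test against $Y_1$ and $Y_2$ so that each quadratic form $\langle[Y_j,X],X\rangle$ factors as a linear form times a linear functional $\tilde c_i(X)$, and define $l'(X)=l_0^{-1}(\tilde c_1(X)U_1+\tilde c_2(X)U_2)$, extending the identity from the open dense set to all of $\mathfrak{g}$ by polynomial continuity. No gaps; this matches the paper's argument.
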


Now we define a bilinear function $f:\mathfrak{g}\times\mathfrak{g}\rightarrow\mathbb{R}$  by
\begin{equation}\label{aftermerge020}
f(X,Y)=\langle X-l'(X),Y\rangle=\langle l_0(X-l'(X)),Y\rangle_{\mathrm{bi}},
\end{equation}
where $l':\mathfrak{g}\rightarrow\mathbf{V}_2$ is the linear map
in Lemma \ref{aftermerge011}. Given a Cartan subalgebra $\mathfrak{t}$,  the linear map $l_1(X)=l_0(X-l'(X))$ of $\mathfrak{g}$
maps all regular vectors in  $\mathfrak{t}$ to $\mathfrak{t}$, hence $l_1$ keeps
the  Cartan subalgebra $\mathfrak{t}$ invariant.
It is well known that there exists a nonzero vector $X\in\mathfrak{g}$, such that $\mathbb{R}X$ is the intersection
of a finite number of Cartan subalgebras of $\mathfrak{g}$. Then any vector in the $\mathrm{Ad}(G)$-orbit $\mathcal{O}_X$  is an eigenvector of the linear map $l_0(X-l'(X))$. Since $\mathfrak{g}$
is simple,  $l_1$ must be a scalar multiple of the identity map. So $f(X,Y)$ is a bi-invariant
inner product on $\mathfrak{g}$.

Now we can determine the metric $\alpha$ completely.
\begin{lemma}\label{forproof}
%Keep all the notations as above.
The following two assertions hold.
\begin{description}
\item{\rm (1)}\quad  The decomposition $\mathfrak{g}=\mathbf{V}_1\oplus\mathbf{V}_2$ is orthogonal with
respect to the bi-invariant metric. Moreover, there exists two positive numbers $c_1$ and $c_2$ such that for any $Z_i\in \mathbf{V}_i$, $i=1,2$, we have $\alpha_1 (Z_1)=c_1 ||Z_1||_{\mathrm{bi}}$ and $\alpha_2=c_2||Z_2||_{\mathrm{bi}}$, where $||\cdot||_{\mathrm{bi}}$ is the Euclidean norm of $\langle,\rangle_{\mathrm{bi}}$.
\item{\rm (2)}\quad  We have $\mathcal{K}_{F;1}=\mathfrak{g}\oplus 0$.
\end{description}
\end{lemma}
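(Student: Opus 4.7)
The plan is to extract both assertions from the bi-invariant identity $\langle X-l'(X),Y\rangle = c\langle X,Y\rangle_{\mathrm{bi}}$ (with $c>0$ and $l'$ taking values in $\mathbf{V}_2$) established just above, supplemented by the KVFCL information coming from restrictive CW-homogeneity and by Corollary~\ref{aftermergekeylemmacorollary}. The key mechanism is that $l'(\mathfrak g)\subset\mathbf{V}_2$, which together with the $\alpha$-orthogonality of the decomposition lets me strip $l'$ out whenever the test vector lies in $\mathbf{V}_1$.

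For the easy part of (1), I would do case analysis on the identity. Testing with $X\in\mathbf{V}_2$, $Y\in\mathbf{V}_1$: since $X-l'(X)\in\mathbf{V}_2$ is $\alpha$-orthogonal to $Y$, the left-hand side vanishes, forcing $\mathbf{V}_1\perp_{\mathrm{bi}}\mathbf{V}_2$. Testing with $X,Y\in\mathbf{V}_1$: the $l'(X)\in\mathbf{V}_2$ contribution is $\alpha$-orthogonal to $Y$, giving $\langle X,Y\rangle_1=c\langle X,Y\rangle_{\mathrm{bi}}$, hence $\alpha_1=c_1\Vert\cdot\Vert_{\mathrm{bi}}$ on $\mathbf{V}_1$ with $c_1=\sqrt c$. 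The harder half of (1) is the corresponding statement on $\mathbf{V}_2$, because the identity is essentially tautological on $\mathbf{V}_2\times\mathbf{V}_2$. For this I would invoke restrictive CW-homogeneity: for each $v\in\mathbf{V}_2$ pick a KVFCL $(X_0,X_0')\in\mathcal K_{F;1}$ with $X_0-X_0'=v$, so that by Lemma~\ref{aftermergelemma-008} both $X_0,X_0'\in\mathbf{V}_2\subset\mathfrak t$. Expanding $F^2(\mathrm{Ad}(\exp tY)X_0-X_0')$ to second order at $t=0$ along $Y\in\mathfrak g_\beta$ a root-space direction, and using that $L_1(0,\cdot)$ and $L_2(0,\cdot)$ reduce to positive constants depending only on $\phi$ at $s=1$, I would obtain an eigenvalue identity of the shape $l_0|_{\mathbf{V}_2}\bigl((H_\beta^\vee)_{\mathbf{V}_2}\bigr)=\mu_\beta(H_\beta^\vee)_{\mathbf{V}_2}$ with $\mu_\beta$ controlled by $\alpha^2(Z_\beta)=c_1^2\Vert Z_\beta\Vert_{\mathrm{bi}}^2$. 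Coroot projections span the two-dimensional space $\mathbf{V}_2$ in several independent directions, and the $\langle\cdot,\cdot\rangle_{\mathrm{bi}}$-self-adjointness of $l_0|_{\mathbf{V}_2}$ together with cross-comparing two such directions pins $l_0|_{\mathbf{V}_2}$ to a positive scalar, producing the required $c_2>0$.

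Assertion (2) then follows almost formally from (1). By Corollary~\ref{aftermergecorollary-0} the map $\pi_1:\mathcal K_{F;1}\to\mathfrak g$ is surjective, and by Lemma~\ref{aftermergelemma-008} every $(X,X')\in\mathcal K_{F;1}$ has $X'\in\mathbf{V}_2$; I just need to upgrade to $X'=0$. With $\alpha$ split as in (1),
\[
F^2(\mathrm{Ad}(g)X-X')=L\bigl(c_1^2\Vert(\mathrm{Ad}(g)X)_{\mathbf{V}_1}\Vert_{\mathrm{bi}}^2,\;c_2^2\Vert(\mathrm{Ad}(g)X)_{\mathbf{V}_2}-X'\Vert_{\mathrm{bi}}^2\bigr)
\]
depends on $g$ only through the two scalars $a(g)=\Vert(\mathrm{Ad}(g)X)_{\mathbf{V}_1}\Vert_{\mathrm{bi}}^2$ and $b(g)=\langle(\mathrm{Ad}(g)X)_{\mathbf{V}_2},X'\rangle_{\mathrm{bi}}$. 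If $X'\neq 0$, then Corollary~\ref{aftermergekeylemmacorollary} applied to the two-dimensional target $\mathbb R\,a\oplus\mathbb R\,b$ lets me vary $(a(g),b(g))$ independently on an open subset of the orbit $\mathcal O_X$; constancy of $F^2$ along that orbit would then force $L$ to be constant on a two-dimensional open region, and by the real-analyticity of $\phi$ (Lemma~\ref{key-lemma-2}) this would propagate to $L$ being linear in its arguments, i.e., $F$ Riemannian, contradicting the standing non-Riemannian hypothesis. Hence $X'=0$ and $\mathcal K_{F;1}=\mathfrak g\oplus 0$.

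The main obstacle will be the middle step: pinning $l_0|_{\mathbf{V}_2}$ to a scalar multiple of the identity on the two-dimensional space $\mathbf{V}_2$. Translating the second-order KVFCL identity into clean eigenvalue data, and verifying that the coroot projections into $\mathbf{V}_2$ cover two independent directions regardless of how $\mathbf{V}_2$ sits inside the Cartan subalgebra, is the real technical heart of the proof. Everything else in the lemma is either a direct consequence of the bi-invariant identity (the bi-invariant orthogonality and the $\mathbf{V}_1$-side of (1)) or follows formally once (1) is in hand (assertion (2)).
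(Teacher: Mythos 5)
Your treatment of the bi-orthogonality of $\mathbf{V}_1\oplus\mathbf{V}_2$ and of $\alpha_1=c_1\Vert\cdot\Vert_{\mathrm{bi}}$ on $\mathbf{V}_1$ coincides with the paper's (test the bi-invariant form (\ref{aftermerge020}) on $\mathbf{V}_2\times\mathbf{V}_1$ and $\mathbf{V}_1\times\mathbf{V}_1$), and you correctly identify that the real content is the $\mathbf{V}_2$-scaling together with $X'=0$. But that is exactly where your proposal has a genuine gap, which you yourself flag: the ``eigenvalue identity'' $l_0|_{\mathbf{V}_2}\bigl((H_\beta^\vee)_{\mathbf{V}_2}\bigr)=\mu_\beta (H_\beta^\vee)_{\mathbf{V}_2}$ is asserted, not derived. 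Carrying out the second-order expansion of $F^2(\mathrm{Ad}(\exp tY)X_0-X_0')$ for $Y\in\mathfrak{g}_\beta$ gives, for each root with $\beta(X_0)\neq 0$, only the scalar relation $c_1^2L_1(0,\alpha_2^2(v))\,\beta(X_0)=L_2(0,\alpha_2^2(v))\,\langle v,\mathrm{pr}_2 t_\beta\rangle_2$, where $t_\beta$ is the bi-invariant dual of $\beta$ and $v=X_0-X_0'$. Letting $\mathrm{pr}_2 t_\beta$ range over a spanning set of $\mathbf{V}_2$, this says that the symmetric operator comparing $\alpha_2$ with $\Vert\cdot\Vert_{\mathrm{bi}}$ on $\mathbf{V}_2$ sends $v$ to a positive multiple of the unknown vector $X_0=v+X_0'$; it determines $X_0'$ in terms of that operator but does not pin the operator down to a scalar, so the projected coroots are not exhibited as eigenvectors and the ``cross-comparison plus self-adjointness'' step has nothing to bite on. Additional input is needed and none is supplied, so the hard half of (1) remains unproved in your plan.

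The paper closes this gap by a different mechanism, which also delivers (2) in the same stroke: take $(X,X')\in\mathcal{K}_{F;1}$ with $X\neq 0$ and, after a conjugation permitted by Corollary \ref{aftermergekeylemmacorollary}, arrange that $\mathrm{pr}_2$ maps a piece of $\mathcal{O}_X$ onto an open piece of $\mathbf{V}_2$. Since $X'\in\mathbf{V}_2$ (Lemma \ref{aftermergelemma-008}) and $\alpha_1=c_1\Vert\cdot\Vert_{\mathrm{bi}}$ on $\mathbf{V}_1=\mathbf{V}_2^{\perp_{\mathrm{bi}}}$, along the level set $\Vert\mathrm{pr}_2\mathrm{Ad}(g)X\Vert_{\mathrm{bi}}=\mathrm{const}$ the quantity $\alpha_1(\mathrm{Ad}(g)X-X')$ is constant; constancy of $F$ together with $L_2>0$ then forces $\alpha_2(\mathrm{pr}_2\mathrm{Ad}(g)X-X')$ to be constant, so an open arc of the bi-invariant circle in $\mathbf{V}_2$ centered at $0$ lies on an $\alpha_2$-ellipse centered at $X'$; two such conics sharing an open arc must coincide, yielding simultaneously $\alpha_2=c_2\Vert\cdot\Vert_{\mathrm{bi}}$ on $\mathbf{V}_2$ and $X'=0$, hence $\mathcal{K}_{F;1}=\mathfrak{g}\oplus 0$. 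Your separate argument for (2), which presupposes all of (1), is essentially sound but has two blemishes: Corollary \ref{aftermergekeylemmacorollary} concerns linear maps, while $a(g)=\Vert(\mathrm{Ad}(g)X)_{\mathbf{V}_1}\Vert_{\mathrm{bi}}^2$ is quadratic (apply the corollary to $\mathrm{pr}_2:\mathfrak{g}\rightarrow\mathbf{V}_2$ and compose, discarding the measure-zero set where $\mathrm{pr}_2\mathrm{Ad}(g)X$ is parallel to $X'$); and once $L$ is constant on an open set of its two arguments you already contradict $L_1,L_2>0$, so the detour through analyticity and ``$L$ linear, hence Riemannian'' is unnecessary and, as stated, incorrect.
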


\begin{proof}
Applying (\ref{aftermerge020}) to an arbitrary pair $(X,Y)\in\mathbf{V}_1\times \mathbf{V}_1$, or $(X,Y)\in\mathbf{V}_2 \times\mathbf{V}_1$ and taking into account the fact that $f$ is a bi-invariant inner product, we see immediately that
$\mathbf{V}_1$ and
$\mathbf{V}_2$ are orthogonal with respect to the bi-invariant metric (which is unique up to a positive scalar), and there exists a positive number $c_1$  such that $\alpha_1(Z_1)=c_1 ||Z_1||_{\mathrm{bi}}$, for any $Z_1\in \mathbf{V}_1$. For simplicity, we  assume that $c_1=1$.

Now given any $(X,X')\in\mathcal{K}_{F;1}$ with $X\neq 0$,
we have
\begin{eqnarray}\label{6}
\alpha_1^2(\mathrm{Ad}(g )X-X')&=& ||{\rm pr}_1( \mathrm{Ad}(g) X))||^2_{\mathrm{bi}}\nonumber\\
&=& ||X||_{\mathrm{bi}}^2-||{\rm pr}_2(\mathrm{ Ad}(g) X)||_{\mathrm{bi}}^2.
\end{eqnarray}
By Corollary \ref{aftermergekeylemmacorollary}, up to  a suitable conjugation, we can assume that
the orthogonal projection from the orbit $\mathcal{O}_X$ to $\mathbf{V}_2$
has surjective tangent map at $X$. Therefore there is a codimension $1$ submanifold $\mathcal{N}$ of $G$
near $e$, such that $\mathrm{pr}_2$ maps $\mathrm{Ad}({\mathcal{N}})X$ onto an open submanifold  of the following ellipsoid centered at 0:
\begin{equation}\label{aftermerge025}
\{Y| Y\in\mathbf{V}_2\mbox{ and } ||Y||_{\mathrm{bi}}=||{\rm pr}_2 (\mathrm{Ad}(g) X)||_{\mathrm{bi}}\}.
\end{equation}
 If
$g\in \mathcal{N}$, then by   (\ref{6}),  $ \alpha_1(\mathrm{Ad}(g) X-X')$ is a constant. Thus
%the fundamental properties of $(\alpha_1,\alpha_2)$-norms and KVFCLs indicate
$\alpha_2(\mathrm{Ad}(g)X-X')=\alpha_2({\rm pr}_2 \mathrm{Ad}(g) X-X')$ must also be a constant function of    $g\in\mathcal{N}$. This implies that  ${\rm pr}_2 (\mathrm{Ad}({\mathcal{N}}) X)$ contains an open submanifold of an ellipsoid in $\mathbf{V}_2$ centered at $X'$,
namely,
\begin{equation}\label{aftermerge030}
\{Y|Y\in\mathbf{V}_2\mbox{ and } \alpha_2^2(Y-X')=\alpha_2^2(X-X')\}.
\end{equation}
However, the two ellipsoids (\ref{aftermerge025}) and (\ref{aftermerge030}) in $\mathbf{V}_2$
have common open submanifolds only when they coincide. Therefore
$\alpha_2$  is a scalar multiple of the restriction of the bi-invariant metric to $\mathbf{V}_2$. This completes
the proof of (1).

(2)\quad The above argument shows that for any $(X,X')\in
\mathcal{K}_{F;1}$, we have $X'=0$. Thus
$\mathcal{K}_{F;1}=\mathfrak{g}\oplus 0$.
\end{proof}

\textbf{Proof of Theorem \ref{main}}\quad Suppose there exists a left invariant non-Riemannian $(\alpha_1,\alpha_2)$-metric $F$  on a compact connected simple Lie group $G$, with decomposition $\mathfrak{g}=\mathbf{V}_1\oplus \mathbf{V}_2$, such that $\mathbf{V}_2$ is a $2$-dimensional
commutative subalgebra of $\mathbf{G}$. If $F$ is  restrictively CW-homogeneous, then by (2) of Lemma \ref{forproof}, we have $\mathcal{K}_{F;1}=\mathfrak{g}\oplus 0$. This  implies that all the right translations of $G$
are isometries, hence  $I_0(G,F)=L(G)R(G)$. So the $\mathrm{Ad}(G)$-action preserves
$\mathbf{V}_2$, that is,  $\mathbf{V}_2$ is a proper non-zero ideal of $\mathfrak{g}$, which is a
contradiction. This completes the proof of Theorem \ref{main}.

\section{Proof of the Key Lemma}\label{keylemmasection}
In this section we give a proof of Lemma \ref{key lemma}. Here the Lie algebra $\mathfrak{g}$ will always be  endowed with the bi-invariant
metric (which is unqiue up to a positive scalar). We will prove Lemma \ref{key lemma}  by deducing a contradiction.  Suppose conversely that  there exists a nonzero subspace $\mathbf{V}\subset\mathfrak{g}$ with $\dim \mathbf{V}\leq 3$,  such that for any $g\in G$,
\begin{equation}
\mathbf{V}\cap \mathrm{Ad}(g) \mathfrak{c}_{\mathfrak{g}}(X)\ne \{0\}.
\end{equation}

\subsection{The case $\dim\mathbf{V}<3$}

 If $\dim\mathbf{V}=1$, then by the assumption that $\mathbf{V}\cap \mathrm{Ad}(g) \mathfrak{c}_{\mathfrak{g}}(X)\neq \{0\}$, $\forall g\in G$,  we have $\mathbf{V}\subset \mathfrak{c}(\mathfrak{g})$, which
is a contradiction.

Suppose $\dim\mathbf{V}=2$.
Then the minimum of $\dim\mathbf{V}\cap \mathrm{Ad}(g) \mathfrak{c}_{\mathfrak{g}}(X)$,
$ g\in G$ is $1$ or $2$. If it is $2$, then $V$ is contained in the center of
$\mathfrak{g}$, which is a contradiction. So we can suitably change $X$ by
conjugations, such that $\dim\mathbf{V}\cap\mathfrak{c}_\mathfrak{g}(X)=1$.
By the semi-continuity, for all $g\in G$ sufficiently close to $e$, we also have
$\dim\mathbf{V}\cap \mathrm{Ad}(g)\mathfrak{c}_\mathfrak{g}(X)=1$.

Let  $U\in\mathbf{V}$ be a nonzero vector linearly spanning $\mathbf{V}\cap\mathfrak{c}_\mathfrak{g}(X)$, and
 $\{U,U'\}$  a basis of $\mathbf{V}$. Then there is a smooth real function
$f(g)$ of $g\in G$, defined on a small neighborhood $N$ of   $e$, such that $f(e)=0$ and
such that $\mathbf{V}\cap \mathrm{Ad}(g)\mathfrak{c}_\mathfrak{g}(X)$ is linearly spanned by
$U+f(g)U'$.

Setting $g=\exp(tY)$, and taking the differentiation of the equation
$$[U+f(g)U',\mathrm{Ad}(g)X]=0$$
with respect to $t$ at $t=0$, we have
$$
[U,[Y,X]]+Df(Y)[U',X]=0, \quad\forall Y\in\mathfrak{g},
$$
where $Df$ is the differential of $f$ at $e$. Thus $\dim [U,[X,\mathfrak{g}]]\leq 1$.
Since $\dim [U,[X,\mathfrak{g}]]$ is an even number,  it must be 0.
%We will see later that it must be at least 4. The contradiction can also be found as follows.
Since $[U,[X,\mathfrak{g}]]=0$
and $[U',X]\neq 0$, we have $Df\equiv 0$.

The above argument on the dimension of the subspace $[U,[X,\mathfrak{g}]]$ is also valid to $\mathrm{Ad}(g)X$, provided
$g\in G$ is sufficiently close to $e$. This implies that there is an neighborhood $N_1$ of $e$ such that $f\equiv 0$ on $N_1$, i.e., $[U,\mathrm{Ad}(g)X]=0$
for any $g\in N_1$. This implies that $U\in\mathfrak{c}
(\mathfrak{g})$, which is a contradiction.

\subsection{The case $\dim\mathbf{V}=3$}

%If $\dim\mathbf{V}\cap\mathrm{Ad}(g) \mathfrak{c}_{\mathfrak{g}}(X)>0$, $\forall g\in G$.
First note that in this case we need only deal with the case that  $\dim\mathbf{V}\cap\mathrm{Ad}(g) \mathfrak{c}_{\mathfrak{g}}(X)=1$ for some $g\in G$. In fact,  otherwise we can choose a proper subspace of $\mathbf{V}$
satisfying  (\ref{aftermerge-0000}), and the proof is then  reduced to the case
$\dim\mathbf{V}<3$.  Replacing  $X$ with certain suitable conjugation if necessary, we can assume that $\dim \mathbf{V}\cap \mathfrak{c}_{\mathfrak{g}}(X)=1$. Let $U\in\mathbf{V}$ be a nonzero vector linearly spanning $\mathbf{V}\cap \mathfrak{c}_{\mathfrak{g}}(X)$,
and  $\{U,U_1,U_2\}$  a basis of $\mathbf{V}$. Then for $g\in G$ sufficiently close to $e$,
we have $\dim V\cap \mathfrak{c}_{\mathfrak{g}}(\mathrm{Ad}(g)X)=1$, and there is a smooth map
$W: G\to \mathfrak{g}$, defined on a small neighborhood $N_2$ of $e$, such that
$W(e)=U$ and  $V\cap \mathfrak{c}_{\mathfrak{g}}(\mathrm{Ad}(g)X)$ is spanned by $W(g)$ for $g\in N_2$.
This means that
there are two smooth functions $f_1(g)$ and $f_2(g)$, defined on  $N_2$,  such that $f_1(e)$=$f_2(e)=0$ and
$[U+f_1(g)U_1+f_2(g)U_2,\mathrm{Ad}(g)X]=0$.
Using a similar argument as in the previous  subsection, we have
\begin{equation}\label{000}
[U,[Y,X]]+Df_1(Y)[U_1,X]+Df_2(Y)[U_2,X]=0,
\end{equation}
where the linear maps $Df_1$, $Df_2:\mathfrak{g}\rightarrow\mathbb{R}$ are the differentials of $f_1$ and $f_2$ at $e$, respectively.
Therefore $\dim [U,[X,\mathfrak{g}]]\leq 3$.  As
 an even number, $\dim[U,[X,\mathfrak{g}]]$ can only be $0$ or $2$.
If $\dim[U,[\mathrm{Ad}(g)X,\mathfrak{g}]]=0$ for all $g\in N_2$, then $f_1=f_2\equiv 0$ on $N_2$, and we can deduce a  contradiction
as in the previous subsection.

The above argument shows that, upon   suitable conjugations,  we can assume that $\dim [U,[X,\mathfrak{g}]]=2$.
By (\ref{000}),
this happens only when $Df_1$ and $Df_2$ are linearly independent.
By the implicit function theorem, any $U'\in\mathbf{V}$ sufficiently close to $U$ spans
the $1$-dimensional $\mathbf{V}\cap\mathrm{Ad}(g)\mathfrak{c}_{\mathfrak{g}}(X)$ for some $g\in N_2$.

To finish the proof of Lemma
\ref{key lemma}, we need only prove that $\dim [U,[X,\mathfrak{g}]]\geq 4$.
This can be equivalently stated as follows. Let $\mathfrak{t}$ be
a Cartan subalgebra of $\mathfrak{g}$ containing $U$ and $X$. Then there are at least $4$ roots, say $\alpha_1,\dots,\alpha_l$, $l\geq 4$, in
the root system $\Delta\subset\mathfrak{t}^*$ of $\mathfrak{g}^C$ with respect to $\mathfrak{t}^C$,  such that $\alpha_i(U)\ne 0$ and $\alpha_i(X)\ne 0$, for all $1\leq i\leq l$. With $\Delta$ viewed as a subset of $\mathfrak{t}$ through
the bi-invariant metric, the above statement is equivalent to   the following assertion.

\medskip
%\begin{lemma}\label{assertion}
\noindent\textbf{Assertion:}\quad  There are
four roots in $\Delta$ which are not orthogonal to either $X$ or $U$.
\medskip

\subsection{Proof of the Assertion}

The \textbf{Assertion} will be proved by  a case by case argument.

{\bf The case  $\mathfrak{g}=A_n$, $n>1$}. Since $[U,X]=0$, up to  a suitable unitary conjugation, the nonzero vectors $U$ and $X$ can be represented simultaneously as  diagonal matrices. Then it is easy to see that $\dim [U,[X,\mathfrak{g}]]\geq 2$. If the equality holds,  then with a suitable Weyl group action, we can write
$U=a\sqrt{-1}\mbox{diag}(-n,1,\ldots,1)$ and $X=b\sqrt{-1}\mbox{diag}(1,-n,1,\ldots,1)$,
where $a,b\in\mathbb{R}\backslash \{0\}$.

As we argued above, any $U'\in\mathbf{V}$ near $U$ has the same eigenvalue multiplicities as $U$.
The set of all matrices in $\mathfrak{su}(n+1)$ with the same eigenvalue multiplications as $U$
is a smooth manifold, whose tangent space at $U$ is $\mathbb{R}U+[U,\mathfrak{g}]$.
Since $\dim \mathbf{V}>1$, we can find a nonzero vector $V\in\mathbf{V}\cap
[U,\mathfrak{g}]$, which can be written as the matrix
$$
V=\begin{pmatrix}
    0     & -\bar{b}_1 &\cdots & -\bar{b}_n \\
    b_1   & 0         &\cdots & 0 \\
    \vdots & \vdots    &\ddots & \vdots\\
    b_n   & 0        & \cdots     & 0 \\
  \end{pmatrix}.
$$
Then a direct computation of the character polynomial of $U+tV$ shows that  $U+tV$ has the same eigenvalue multiplicities
as $U$ only when $t=0$, which is a contradiction. This proves the \textbf{Assertion} for $A_n$,
$n>1$.

{\bf The case $\mathfrak{g}=D_n$ with $n\geq 4$}. The Cartan subalgebra $\mathfrak{t}$
with the bi-invariant metric can be
 realized as the standard Euclidean space $\mathbb{R}^n$. Let $\{e_1,\ldots,e_n\}$ be the standard orthonormal basis. The root system
will be identified with its dual set in $\mathfrak{t}=\mathbb{R}^n$, i.e.,
\begin{equation}\label{rootsystemDn}
\{\pm e_i\pm e_j, \forall 1\leq i<j\leq n\}.
\end{equation}
Suppose $U=\mathop{\sum}\limits_{i=1}^n a_i e_i$ and $X=\mathop{\sum}\limits_{i=1}^{n} b_i e_i$.

If there is a pair $\{i,j\}$, such that
$|a_i|\neq |a_j|$ and $|b_i|\neq |b_j|$, then the four roots $\pm e_i\pm e_j$ are not
orthogonal to either $U$ or $X$,  hence the \textbf{Assertion} holds.
Now we suppose  conversely that the \textbf{Assertion} is not true. Then the above argument shows that the following two assertions hold:
\begin{enumerate}
\item\quad If  there exists $i<j$ such that $|a_i|\ne |a_j|$, then for any $k, l$, we have  $|b_k|=|b_l|$;
 \item\quad  If  there exists $i<j$ such that $|b_i|\ne |b_j|$, then for any $k, l$, we have  $|a_k|=|a_l|$.
\end{enumerate}
 By exchanging $U$ and $X$, multiplying $U$ and $X$ by suitable nonzero scalars, or using a Weyl group action to change $U$ and $X$ simultaneously (i.e., reordering the entries and
changing the signs of even entries), we can reduce the discussion to the two cases below. We will see that, in either case, one can find four roots which are not orthogonal to either $U$ or $X$. If the Weyl group action is used, then we use its inverse action to pull the roots back, and we can find the four roots indicated by the \textbf{Assertion}
for the original $U$ and $X$.

(1)\quad  $a_i=1$, $\forall i=1,\ldots,n$.
If $b_1\neq 0$ and $b_2=b_3=0$, then we choose the roots $\pm(e_1+e_2)$ and $\pm(e_1+e_3)$.
If the first two $b_i$s are nonzero
and $b_3=0$, then we choose $\pm(e_1+e_3)$ and $\pm(e_2+e_3)$.
If the first three $b_i$ are all nonzero and have the same sign, then we
choose the roots $\pm(e_1+e_2)$ and $\pm(e_1+e_3)$.
If the first four $b_i$ are all nonzero and $\mathrm{sign}(b_1)=\mathrm{sign}(b_2)\neq\mathrm{sign}(b_3)
=\mathrm{sign}(b_4)$, then we choose
the roots $\pm(e_1+e_2)$ and $\pm(e_3+e_4)$.

(2)\quad  $a_i=-1$ and  $a_i=1$, for any $i\ne 1$. We can further change the sign of $a_1$ and
$b_1$ simultaneously. Changing the sign of only one entry is not a Weyl group action, but
it preserves the root system. Then the discussion is reduced to (1).

To summarize, for all cases of $U$ and $X$, we can deduce a contradiction if we assume
that the \textbf{Assertion} is not true. This proves the \textbf{Assertion} for $D_n$, $n\geq 4$.

{\bf The case $\mathfrak{g}=B_n$ with $n>2$}. With the Cartan subalgebra identified with
the standard Euclidian space $\mathbb{R}^n$, the root system of $B_n$ can be identified with
the set
\begin{equation}\label{rootsystemBn}
\{\pm e_i\pm e_j, \pm e_i, \forall i\neq j\}.
\end{equation}
When $n\geq 4$, the root system of $B_n$ in (\ref{rootsystemBn})
contains that of $D_n$ in (\ref{rootsystemDn}), and the \textbf{Assertion} follows from that of the case of $D_n$.

Now consider $B_3$. Assume conversely that there does not exist four roots which are not orthogonal to either $U$ or $X$. Let $U=a_1 e_1+a_2 e_2 + a_3 e_3$ and $X=b_1 e_1 + b_2 e_2
+b_3 e_3$. Using a similar  argument as above, we can show that
either  $|a_1|=|a_2|=|a_3|$ or  $|b_1|=|b_2|=|b_3|$. Then we can similarly exchange
$U$ and $X$, or change $U$ and $X$ by nonzero scalar multiplications, or use Weyl group actions
to change $U$ and $X$ simultaneously (that is, reorder the entries and change the signs arbitrarily), to reduce the discussion to the following two cases.
\begin{description}
\item{(i)}\quad $a_1=a_2=a_3=1$, $b_1\neq 0$ and $b_2=b_3=0$. In this case,  we choose $\pm e_1$ and $\pm(e_1+e_2)$.
\item{(ii)}\quad $a_1=a_2=a_3=1$,  and the first two $b_i$ are nonzero. In this case,  we choose $\pm e_1$ and $\pm e_2$.
\end{description}
Then in both  cases the selected four roots are not orthogonal to either $U$ or $X$. This leads to a contradiction, proving  the \textbf{Assertion} for $B_n$, $n>2$.

{\bf The case $\mathfrak{g}=C_n$ with  $n>2$}. The argument is exactly the same as for the previous case.

{\bf The case $\mathfrak{g}=B_2=C_2$ or $G_2$}. The number of roots orthogonal to $U$ or $X$ is at most four. Therefore  there are at least $4$ roots which are not orthogonal to either $U$ or $X$.

{\bf The case $\mathfrak{g}=F_4$}.  With the Cartan subalgebra identified with
the standard Euclidian space $\mathbb{R}^4$, the root system of $F_4$ can be identified with a
set consisting of $48$ vectors, namely,   all permutations of $(\pm 1,\pm 1,0,0)$, all permutations
of $(\pm 1,0,0,0)$, and $(\pm \frac{1}{2},\pm \frac{1}{2},\pm \frac{1}{2},\pm \frac{1}{2})$. Since it contains the root system
of $B_4$ in (\ref{rootsystemBn}),
the \textbf{Assertion} follows.

{\bf The case  $\mathfrak{g}=E_6$}. When the Cartan subalgebra $\mathfrak{t}$ is modeled as the Euclid space
$\mathbb{R}^6$ with the standard inner product, the root system consists of the following two sets:
\begin{description}
\item{(1)}\quad The $40$ roots in the root system of $D_5$, i.e.,
 $(c_1,c_2,c_3,c_4,c_5,0)$, where two of the $c_i$, $i=1,\cdots,5$, are $\pm 1$ and all the others  are $0$;
 \item{(2)}\quad  The $32$ roots of the form $(\pm \frac{1}{2},\ldots,\pm \frac{1}{2},\pm \frac{\sqrt{3}}{2})$, where the  total number of the plus signs is odd.
\end{description}
Assume conversely that there does not exist four roots which are not orthogonal to either $U$ or $X$. Let $U=(U',a_6)=(a_1,\ldots,a_6)$ and $X=(X',b_6)=(b_1,\ldots,b_6)$.
 Note that  the root system of $E_6$ contains that of $D_5$. By the argument in the case of
$D_n$, we have either $U'=0$ or $X'=0$.
Using a suitable scalar change or an exchange between $U$ and $X$ if necessary, we can assume that
$U=(0,\ldots,0,1)$. If $X'=0$, then any root of the form $(\pm \frac{1}{2},\ldots,\pm \frac{1}{2},\pm \frac{\sqrt{3}}{2})$ is not orthogonal to either $U$ or $X$. Without losing generality,
we can assume that $b_6\leq 0$ and $X'\neq 0$. Furthermore,   we can use the Weyl group
 of $D_5$ (viewed as a subgroup of the Weyl group of $E_6$) to change $X'$ so that $b_1\geq \cdots\geq b_5$,
$|b_1|\geq\cdots\geq |b_5|$, $b_1>0$ and $b_4\geq 0$, without changing $U$. Then the
four roots
$\pm(\frac{1}{2},\ldots,\frac{1}{2},-\frac{\sqrt{3}}{2})$ and $\pm(\frac{1}{2},\frac{1}{2},\frac{1}{2},-\frac{1}{2},-\frac{1}{2},-\frac{\sqrt{3}}{2})$ are not
 orthogonal to either $U$ or $X$, which is a contradiction. This proves
the \textbf{Assertion} for $E_6$.

{\bf The case $\mathfrak{g}=E_7$}. When $\mathfrak{t}$ is modeled as the Euclid space $\mathbb{R}^7$ with the standard inner product, the root system consists of the roots of the following three types:
\begin{description}
\item{(a)}\quad The roots $(c_1,c_2,c_3,c_4,c_5, c_6,0)$ in the root system $D_6$ such that two of $c_i$, $i=1,\cdots,6$ are equal to $\pm1$ and all the others are $0$;
\item{(b)}\quad  The elements of the form
$(\pm \frac{1}{2},\pm \frac{1}{2},\ldots,\pm \frac{1}{2},\pm \frac{1}{\sqrt{2}})$,
where  the total  number of the $+\frac{1}{2}$ is even;
\item{(c)}\quad  The two elements $(0,\ldots,0,\pm\sqrt{2})$.
\end{description}

Let $U=(U',a_7)=(a_1,\ldots,a_7)$ and $X=(X',b_7)=(b_1,\ldots,b_7)$. Suppose conversely that there does not exists four roots which are not orthogonal to either $U$ or $X$.  Note that the root system of $E_7$ contains that of $D_6$, the argument for the case of $D_n$, $n\geq 4$ then indicates that either $U'=0$ or $X'=0$.
If $U'=X'=0$, then any root  of the type (b) or (c) is not orthogonal to either $U$
or $X$, which is a  contradiction.
 With a possible exchange between $U$ and $X$, and nonzero scalar changes, we can assume that $U=(0,\ldots,0,1)$, $X'\neq 0$ and $b_7\leq 0$.
Using the Weyl group of $D_6$ (viewed as a subgroup of the Weyl group of $E_7$) to change $X'$ while keeping $b_6$ and $U$  unchanged,  we can assume that $b_1\geq b_2\cdots \geq b_6$, $|b_1|\geq\cdots\geq |b_6|$, $b_1>0$ and $b_5\geq 0$.
Then the roots
$\pm(\frac{1}{2},\ldots,\frac{1}{2},-\frac{1}{\sqrt{2}})$ and $\pm(\frac{1}{2},\ldots,\frac{1}{2},-\frac{1}{2},-\frac{1}{2},-\frac{1}{\sqrt{2}})$ are not orthogonal to either $U$ or $X$, which is a contradiction. This proves the \textbf{Assertion} for $E_7$.

{\bf The case $\mathfrak{g}=E_8$}. With the Cartan subalgebra $\mathfrak{t}$ identified with
the standard $\mathbb{R}^8$, the root system of $E_8$ consists of the vectors of the following types:
\begin{description}
\item{(a)}\quad Elements of  the
root system of $D_8$, i.e., all the permutations of $(\pm 1,\pm 1,0,\ldots,0)$;
\item{(b)}\quad  Elements of the form
$(\pm \frac{1}{2},\pm \frac{1}{2},\ldots,\pm \frac{1}{2})$, where the total number of the minus signs is even.
\end{description}
It contains the root system of $D_8$ in (\ref{rootsystemDn}). Thus the \textbf{Assertion} in this case follows from the argument in the
case of $D_n$, $n\geq 4$.

Up to now we have completed the proof of the \textbf{Assertion} for all the cases. This completes the proof of
Lemma \ref{key lemma}, concluding the proof of all the results in this paper.

\medskip
\textbf{Acknowledgements:}\quad This work was finished during the first author's visit to the Chern Institute of Mathematics. He is grateful to the faculty members of the institute for their hospitality.

\end{document}